\documentclass[11pt]{amsart}

\usepackage{amsmath,amssymb,amsthm,mathtools}
\usepackage[margin=28mm]{geometry}
\usepackage{enumitem}
\usepackage{array,tabularx,booktabs}
\usepackage{tikz}
\usepackage{tikz-cd}
\usetikzlibrary{calc,arrows.meta,positioning,decorations.pathmorphing,
  decorations.markings,patterns,shapes.geometric,backgrounds,fit,intersections}
\usepackage[most]{tcolorbox}
\usepackage{microtype}
\usepackage{hyperref}

\hypersetup{
  colorlinks=true,
  linkcolor=blue!60!black,
  citecolor=blue!60!black,
  urlcolor=blue!60!black,
  pdftitle={Essential-Surface Complexes of Knot Exteriors: Image, Kernel, and Reconstruction},
  pdfauthor={Makoto Ozawa},
  pdfsubject={Rigidity, mapping class groups, and essential-surface complexes of knot exteriors},
  pdfkeywords={knot exterior, essential-surface complex, mapping class group, rigidity, boundary slope, JSJ decomposition, characteristic submanifold, Kakimizu complex, figure-eight knot}
}

\numberwithin{equation}{section}

\newtheorem{theorem}{Theorem}[section]
\newtheorem{proposition}[theorem]{Proposition}
\newtheorem{lemma}[theorem]{Lemma}
\newtheorem{corollary}[theorem]{Corollary}

\newtheorem{problem}[theorem]{Problem}
\newtheorem{question}[theorem]{Question}

\theoremstyle{definition}
\newtheorem{definition}[theorem]{Definition}
\newtheorem{warning}[theorem]{Warning}

\newtheorem{notation}[theorem]{Notation}
\newtheorem{recipe}[theorem]{Recipe}

\theoremstyle{remark}
\newtheorem{remark}[theorem]{Remark}

\newtheorem{theoremA}{Theorem}

\newcommand{\ES}{\mathcal{ES}}
\newcommand{\CES}{\mathcal{CES}}
\newcommand{\HA}{\mathfrak H}
\newcommand{\Mod}{\operatorname{Mod}}
\newcommand{\Aut}{\operatorname{Aut}}
\newcommand{\Homeo}{\operatorname{Homeo}}
\newcommand{\Int}{\operatorname{int}}

\newcommand{\Char}{\operatorname{Char}}
\newcommand{\Fr}{\operatorname{Fr}}
\newcommand{\supp}{\operatorname{supp}}
\newcommand{\lk}{\operatorname{lk}}
\newcommand{\Sl}{\operatorname{Sl}}
\newcommand{\Out}{\operatorname{Out}}

\newcommand{\Image}{\operatorname{Im}}
\providecommand{\sslash}{\mathbin{/\mkern-6mu/}}
\newcolumntype{Y}{>{\raggedright\arraybackslash}X}

\definecolor{surfA}{RGB}{31,110,168}     % first surface / vertex
\definecolor{surfB}{RGB}{200,80,30}      % second surface / vertex
\definecolor{surfC}{RGB}{40,140,80}      % third surface / vertex
\definecolor{greyish}{RGB}{110,110,110}
\definecolor{lightfill}{RGB}{235,240,246}

\newtcolorbox{keybox}[1][]{
  colback=lightfill, colframe=surfA!70!black, boxrule=0.5pt,
  left=6pt,right=6pt,top=5pt,bottom=5pt, arc=1.5pt, #1}

\newtcolorbox{scopebox}[1][]{
  colback=white, colframe=greyish, boxrule=0.5pt,
  left=6pt,right=6pt,top=5pt,bottom=5pt, arc=1.5pt, #1}

\title[Essential-Surface Complexes of Knot Exteriors]
{Essential-Surface Complexes of Knot Exteriors:\\
Image, Kernel, and Reconstruction}

\author{Makoto Ozawa}
\address{Department of Natural Sciences, Faculty of Arts and Sciences, Komazawa University, Tokyo, Japan}
\email{w3c@komazawa-u.ac.jp}
\date{August 23, 2026}
\subjclass[2020]{Primary 57K10; Secondary 57K30, 57M60}
\keywords{knot exterior, essential-surface complex, mapping class group, rigidity, boundary slope, JSJ decomposition, characteristic submanifold, Kakimizu complex, figure-eight knot}

\begin{document}

\begin{abstract}
The curve complex of a surface is deliberately forgetful, yet in most
cases its automorphism group recovers the mapping class group.  This
paper develops an analogous image--kernel--reconstruction viewpoint
for the essential-surface complex $\ES(E)$ of a knot exterior
$E=E(K)$, equivalently Schultens's initial surface complex
$\mathcal S_0(E)$.  The paper is written as an entry point: the basic
objects are first drawn in classical examples, and the general
framework is introduced only afterwards.

Several elementary structural facts organize the theory.  The complex
$\ES(E)$ is a flag complex, its boundary-bearing vertices are layered
by boundary slope, every individual $\ES(E)$ is finite-dimensional,
and iterated cables give examples of arbitrarily large dimension.
Kakimizu's complexes of incompressible and minimal-genus spanning
surfaces occur naturally as full subcomplexes.  For a
\emph{slope-separated} knot --- one with no closed or meridional
essential surface and at most one vertex at each boundary slope --- we
determine the natural action completely: every orientation-preserving
mapping class acts trivially, while the image is $\mathbb Z/2$ exactly
when the knot is amphichiral and a nonzero boundary slope occurs.  In
particular the bare complex has far more automorphisms than the
geometric image, and an even number of boundary slopes forces
chirality.

Torus knots, the figure-eight knot, cable knots, and connected sums are
used as model calculations.  For a connected sum of two nonfibered
prime knots, annular spinning translates Banks's winding coordinate,
so the resulting infinite cyclic subgroup is already visible on
$\ES(E)$.  The general problem is then organized into three tasks --- recognition,
realization, and kernel determination.  A graded problem list and a
worked two-bridge recipe are included for readers entering the subject;
a deliberately over-marked hierarchy atlas is relegated to an appendix
as a reconstruction benchmark.
\end{abstract}

\maketitle
\tableofcontents

\section{Introduction}
\label{sec:intro}

\subsection{From curves on a surface to surfaces in a knot exterior}

The curve complex $\mathcal C(S)$ of a surface is a deliberately
forgetful object.  Its vertices are isotopy classes of essential
simple closed curves, and a collection of vertices spans a simplex
when the curves can be made pairwise disjoint.  Nothing else is
recorded: not lengths, not positions, not intersection numbers.
Nevertheless, apart from a short list of low-complexity surfaces, the
Ivanov--Korkmaz--Luo theorem \cite{Ivanov,Korkmaz,Luo} recovers the
extended mapping class group from this bare combinatorics,
\[
  \Aut(\mathcal C(S))\;\cong\;\Mod^{\pm}(S).
\]

Replace ``surface'' by ``knot exterior'' and ``curve'' by ``essential
surface'' and the same question can be asked in dimension three
(Figure~\ref{fig:analogy}).  Let
\[
  E=E(K)=S^3\setminus\Int N(K)
\]
be the exterior of a nontrivial knot $K\subset S^3$.  Its essential
surfaces span a complex $\ES(E)$ in exactly the same way, and every
self-homeomorphism of $E$ permutes its vertices.  How much of $E$, and
how much of its symmetry, survives in this combinatorics?

\begin{figure}[htb]
\centering
\begin{tikzpicture}[scale=1.0]
% ================= left: genus-two surface with two disjoint curves =======
\begin{scope}[xshift=0cm]
  % outline of a genus-two surface
  \draw[thick,greyish]
    (-2.2,0)
    .. controls (-2.2,1.15)  and (-1.5,1.25)  .. (-0.9,1.05)
    .. controls (-0.45,0.90) and (-0.3,0.60)  .. (0,0.60)
    .. controls (0.3,0.60)   and (0.45,0.90)  .. (0.9,1.05)
    .. controls (1.5,1.25)   and (2.2,1.15)   .. (2.2,0)
    .. controls (2.2,-1.15)  and (1.5,-1.25)  .. (0.9,-1.05)
    .. controls (0.45,-0.90) and (0.3,-0.60)  .. (0,-0.60)
    .. controls (-0.3,-0.60) and (-0.45,-0.90).. (-0.9,-1.05)
    .. controls (-1.5,-1.25) and (-2.2,-1.15) .. (-2.2,0) -- cycle;
  % the two handles
  \draw[thick,greyish] (-1.62,0.12) .. controls (-1.42,-0.16) and (-1.18,-0.16) .. (-0.98,0.12);
  \draw[thick,greyish] (-1.53,0.03) .. controls (-1.40,0.16)  and (-1.20,0.16)  .. (-1.07,0.03);
  \draw[thick,greyish] ( 0.98,0.12) .. controls ( 1.18,-0.16) and ( 1.42,-0.16) .. ( 1.62,0.12);
  \draw[thick,greyish] ( 1.07,0.03) .. controls ( 1.20,0.16)  and ( 1.40,0.16)  .. ( 1.53,0.03);
  % two disjoint meridians: each encircles the band between the
  % silhouette and the corresponding hole, so each is essential on S
  \draw[surfA,very thick]                (-1.3,1.02) arc (90:-90:0.22 and 0.48);
  \draw[surfA,very thick,densely dashed] (-1.3,1.02) arc (90:270:0.22 and 0.48);
  \draw[surfB,very thick]                ( 1.3,1.02) arc (90:-90:0.22 and 0.48);
  \draw[surfB,very thick,densely dashed] ( 1.3,1.02) arc (90:270:0.22 and 0.48);
  \node[surfA,font=\small] at (-1.3,1.33) {$c_1$};
  \node[surfB,font=\small] at ( 1.3,1.33) {$c_2$};
\end{scope}
% ================= middle arrow ==========================================
\node at (3.5,0) {\Large $\rightsquigarrow$};
% ================= right: the exterior, one dimension down ===============
\begin{scope}[xshift=6.6cm]
  \fill[greyish,opacity=0.12] (0,0) circle (1.75);
  \fill[white] (0,0) circle (0.65);
  \fill[greyish,opacity=0.28] (0,0) circle (0.65);
  \draw[thick,greyish] (0,0) circle (1.75);
  \draw[thick,greyish] (0,0) circle (0.65);
  \node[greyish,font=\small] at (0,0) {$N(K)$};
  % two disjoint essential arcs
  \draw[surfA,very thick]
    (110:0.65) .. controls (100:1.05) and (78:1.20) .. (58:1.75);
  \draw[surfB,very thick]
    (290:0.65) .. controls (280:1.05) and (258:1.20) .. (238:1.75);
  \node[surfA,font=\small] at (28:1.30) {$F_1$};
  \node[surfB,font=\small] at (208:1.30) {$F_2$};
\end{scope}
% ================= aligned sub-captions ==================================
\node[font=\small,align=center] at (0,-2.35)
   {a surface $S$:\\ two disjoint essential curves};
\node[font=\small,align=center] at (6.6,-2.35)
   {$E(K)$ one dimension down:\\ two disjoint essential arcs};
% ================= bottom: the two complexes =============================
\begin{scope}[yshift=-3.55cm]
  \node at (-0.75,0) {$\mathcal C(S)$:};
  \fill[surfA] (0.25,0) circle (2.2pt);
  \fill[surfB] (1.45,0) circle (2.2pt);
  \draw[thick] (0.25,0) -- (1.45,0);
  \node[surfA,below=2pt,font=\small] at (0.25,0) {$[c_1]$};
  \node[surfB,below=2pt,font=\small] at (1.45,0) {$[c_2]$};
  \node at (6.6-2.4,0) {$\ES(E)$:};
  \fill[surfA] (6.6-1.4,0) circle (2.2pt);
  \fill[surfB] (6.6-0.2,0) circle (2.2pt);
  \draw[thick] (6.6-1.4,0) -- (6.6-0.2,0);
  \node[surfA,below=2pt,font=\small] at (6.6-1.4,0) {$[F_1]$};
  \node[surfB,below=2pt,font=\small] at (6.6-0.2,0) {$[F_2]$};
\end{scope}
\end{tikzpicture}
\caption{The analogy.  Left: two disjoint essential simple closed
curves on a surface $S$ span an edge of the curve complex $\mathcal
C(S)$.  Right: the knot exterior $E(K)$, drawn schematically one
dimension down as an annulus around $N(K)$, in which properly embedded
essential surfaces appear as properly embedded essential arcs; two
disjoint such arcs span an edge of the essential-surface complex
$\ES(E)$.  The question of this paper is how much of $E$ and of its
symmetry group survives in the right-hand picture.}
\label{fig:analogy}
\end{figure}

Every self-homeomorphism of a nontrivial knot exterior preserves the
meridian slope, by the Gordon--Luecke theorem \cite{GL}.  We therefore
do not carry the meridian through the main notation: write
$\Mod^{\pm}(E)=\pi_0\Homeo(E)$, and let $X$ denote one of the three
objects used below, $\ES(E)$, $\ES^{\mathrm{per}}(E)$, or $\CES(E)$.
Each has a natural action
\begin{equation}
\label{eq:intro-natural-action}
  \Phi_X:\Mod^{\pm}(E)\longrightarrow\Aut(X).
\end{equation}
The meridian reappears only when an actual slope coordinate or an
explicit reconstruction marking is needed.

\begin{keybox}
\textbf{Image.}\ \ Which combinatorial symmetries of $X$ come from
homeomorphisms of $E$?

\medskip
\noindent\textbf{Kernel.}\ \ Which homeomorphisms of $E$ are invisible to
$X$?

\medskip
\noindent\textbf{Reconstruction.}\ \ How much of $E$ itself can be read off
from $X$, and with how little extra information?
\end{keybox}

\noindent
The three questions are logically independent, and in dimension three
they behave quite differently from their two-dimensional models.  The
purpose of this paper is to make all three concrete enough that a
reader meeting them for the first time can compute examples, and then
to record what is elementary, what is known, and what remains open.

\subsection{What is in this paper}

We have tried to keep the entry cost low.  Section~\ref{sec:firstlook}
computes $\ES(E)$ by hand in three classical families, with pictures
and without any machinery beyond the classification of surfaces in
Seifert fibered spaces and the Hatcher--Thurston census.  Only after
those examples do we introduce the general definitions.  Readers who
want the general framework immediately may go directly to
Section~\ref{sec:complexes}.

The mathematical content is of three kinds.

\smallskip
\emph{Elementary structure theory} (Sections~\ref{sec:complexes}
and~\ref{sec:visibility}).  Two facts organize every example in the
paper.  The first is that $\ES(E)$ is layered by boundary slope: a
simplex can use only one slope, because essential curves of distinct
slopes on the boundary torus must intersect
(Lemma~\ref{lem:slope-layer}).  The second is that an
orientation-preserving homeomorphism of $E$ fixes every unoriented
slope, whereas an orientation-reversing one reverses the sign of every
slope (Remark~\ref{rem:GL}).  Combining them gives the visibility
theorems below.  In the opposite direction, iterated cables show that
$\ES(E)$ nevertheless has unbounded dimension
(Proposition~\ref{prop:dimension}), so the discreteness seen in the
first examples is an accident of small complexity, not a general
phenomenon.

\smallskip
\emph{The framework} (Sections~\ref{sec:decorations}--\ref{sec:reduction}).
The general rigidity problem is organized around only three named
objects: the bare complex $\ES(E)$, a peripheral refinement recording
surface type and boundary slope, and a characteristic refinement
recording the JSJ/characteristic data.  The general problem is then
separated into intrinsic recognition, geometric realization, and
kernel determination.  Technical realization data and an intentionally
over-marked reconstruction benchmark are kept in the appendices so
that they do not obstruct a first reading.

\smallskip
\emph{Model computations} (Sections~\ref{sec:torus-knots},
\ref{sec:figure-eight}, \ref{sec:composite}, \ref{sec:satellite}).
Torus knots, the figure-eight knot, connected sums, and cables.  The
surface censuses and symmetry groups used here are classical and are
cited where used; what is computed is the resulting image and kernel.

\subsection{The main theorems}

Three easy hypotheses recur, and it is worth isolating them.

\begin{definition}[Slope-separated knot]
\label{def:slope-separated-intro}
A nontrivial knot $K$ is \emph{slope-separated} if
\begin{enumerate}[label=(\roman*),leftmargin=*]
\item $E(K)$ contains no closed essential surface;
\item $E(K)$ contains no essential \emph{meridional} surface, that is,
no essential surface of boundary slope $\mu$; and
\item distinct vertices of $\ES(E(K))$ have distinct boundary slopes.
\end{enumerate}
We then write $S(K)\subset\mathbb Q$ for the set of boundary slopes,
so that $\ES(E)$ is a set of $|S(K)|$ isolated points, one per slope.

Condition (ii) is not a technicality.  Writing slopes as $p/q$ with
the meridian as $\infty=1/0$, the involution $s\mapsto-s$ of
Remark~\ref{rem:GL} fixes \emph{two} slopes, namely $0$ and $\infty$;
excluding $\infty$ makes $0$ its unique fixed point, which is what the
parity statement in Theorem~\ref{thmA} uses.  Condition (ii) holds
for torus knots and, by Hatcher--Thurston, for all two-bridge knots,
whose boundary slopes are even integers \cite{HatcherThurston}.
\end{definition}

Torus knots ($S=\{0,pq\}$) and the figure-eight knot
($S=\{0,+4,-4\}$) are slope-separated; a general knot is not, and we
say so explicitly in Remark~\ref{rem:slope-sep-rare}.  For this class
the whole image--kernel problem has a complete and very short answer,
and --- this is the point --- the answer does not require knowing the
mapping class group.

\begin{theoremA}[Visibility for slope-separated knots]
\label{thmA}
Let $K$ be a nontrivial slope-separated knot, $E=E(K)$, and
$b=|S(K)|$.  For $X=\ES(E)$ or $\ES^{\mathrm{per}}(E)$ (and also
$X=\CES(E)$ when $\Fr\Char(E)=\varnothing$), every
orientation-preserving mapping class acts trivially:
\[
  \Mod^{+}(E)\subseteq \mathcal K_X(E)=\ker\Phi_X .
\]
Moreover
\[
  \Image\Phi_X\cong
  \begin{cases}
    \mathbb Z/2, & \text{if $K$ is amphichiral and $S(K)\neq\{0\}$},\\[2pt]
    1, & \text{otherwise.}
  \end{cases}
\]
The nontrivial element, when it occurs, is the sign involution
$v_s\leftrightarrow v_{-s}$.  For the bare complex one also has
\[
  \Aut(\ES(E))\cong S_b,
  \qquad |\Image\Phi_{\ES}|\le2,
\]
so the combinatorial automorphism group is usually much larger than
the geometric image.  Finally, amphichirality forces
$S(K)=-S(K)$; hence $b$ is odd, and an even number of boundary slopes
forces $K$ to be chiral and the bare action to be trivial.
\end{theoremA}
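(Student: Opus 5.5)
The plan is to reduce everything to the action on boundary slopes, using the two organizing facts quoted in the introduction: the slope layering of Lemma~\ref{lem:slope-layer} and the slope behaviour of homeomorphisms in Remark~\ref{rem:GL}. Under hypotheses (i)--(iii), every vertex of $\ES(E)$ is a surface with nonempty boundary, so it has a well-defined boundary slope. By (iii) the map $v\mapsto$ slope is a bijection from the vertex set onto $S(K)$. By Lemma~\ref{lem:slope-layer} no two of these vertices span an edge, so $\ES(E)$ is the discrete set $\{v_s : s\in S(K)\}$, as recorded after Definition~\ref{def:slope-separated-intro}. This immediately gives $\Aut(\ES(E))\cong S_b$, since any permutation of a $0$-dimensional complex is simplicial.

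For the kernel, take $h\in\Mod^{+}(E)$. By Remark~\ref{rem:GL}, which rests on Gordon--Luecke, $h$ fixes every unoriented slope. Then $h(v_s)$ is a vertex of slope $s$, and by (iii) this forces $h(v_s)=v_s$. So $\Mod^+(E)\subseteq\ker\Phi_{\ES}$. The same argument works for $\ES^{\mathrm{per}}(E)$, whose underlying complex is $\ES(E)$ with decorations (surface type, slope) that are carried along by $h$. For $\CES(E)$ with $\Fr\Char(E)=\varnothing$, I expect the characteristic refinement to add no further vertices or labels beyond $\ES(E)$, so the same argument applies. Here I would check against the definition in Section~\ref{sec:decorations} that the extra data is indeed empty in that case.

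For the image, note that $\Mod^{\pm}(E)/\Mod^{+}(E)$ has order at most $2$, so $|\Image\Phi_X|\le 2$. If $K$ is chiral, every self-homeomorphism of $E$ is orientation-preserving. This is the step needing care: an orientation-reversing homeomorphism of $E$ preserves the meridian by Gordon--Luecke, so it extends over $N(K)$ to an orientation-reversing homeomorphism of $S^3$ carrying $K$ to itself, which means $K$ is amphichiral. Hence the image is trivial when $K$ is chiral. If $K$ is amphichiral, an orientation-reversing $h$ sends slope $s$ to $-s$ by Remark~\ref{rem:GL}. Therefore $S(K)=-S(K)$ and $h(v_s)=v_{-s}$. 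This permutation is nontrivial exactly when some $s\ne0$ lies in $S(K)$, i.e.\ when $S(K)\neq\{0\}$; $S(K)$ is nonempty since every knot exterior contains a Seifert surface of slope $0$. This yields the case split and identifies the generator as the sign involution.

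Parity follows by counting. If $K$ is amphichiral, then $S(K)\subset\mathbb Q$ is invariant under $s\mapsto -s$, whose only fixed point in $\mathbb Q$ is $0$; condition (ii) excludes $\infty$, and this is where it is used. The Seifert slope $0$ is always in $S(K)$, and the remaining slopes pair off as $\{s,-s\}$, so $b$ is odd. Contrapositively, if $b$ is even then $K$ is chiral, and the action is trivial by the previous paragraph. I expect the main obstacle to be not the combinatorics but the bookkeeping for the refined complexes $\ES^{\mathrm{per}}$ and $\CES$. There I must confirm that their automorphisms are still determined by the vertex permutation, so that the kernel and image computations transfer verbatim.
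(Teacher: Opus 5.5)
Your proposal is correct and follows essentially the same route as the paper's proof. Orientation-preserving classes fix every unoriented slope and hence every vertex by condition (iii); orientation-reversing classes act by $v_s\mapsto v_{-s}$; the refined levels add only labels on the same vertex set; and parity comes from $0$ being the unique fixed point of $s\mapsto -s$ once $\infty$ is excluded by (ii). Your explicit argument that a chiral knot has no orientation-reversing mapping class (extend across $N(K)$ using meridian preservation) fills in a step the paper leaves implicit, and, like the paper, your pairing argument tacitly uses that $S(K)$ is finite.
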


Torus knots and the figure-eight knot are immediate instances.  The
torus-knot image is trivial; for the figure-eight knot the image is
$\mathbb Z/2$, generated by amphichirality.  The point is that none of
this requires computing the mapping class group first.

The second main theorem is a warning against generalizing from the small
examples.

\begin{theoremA}[Unbounded dimension]
\label{thmB}
For every $n\ge1$ there is a knot $K$ with $\dim\ES(E(K))\ge n$.  One
may take $K$ to be an $n$-fold iterated nontrivial cable of a
nontrivial knot; $n$ of its JSJ tori together with the outermost
cabling annulus are simultaneously disjoint and pairwise
non-isotopic, and therefore span an $n$-simplex.
\end{theoremA}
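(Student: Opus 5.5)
The plan is to build $K_n$ inductively and to exhibit $n+1$ vertices that are pairwise disjoint and pairwise non-isotopic. Since $\ES(E)$ is a flag complex, these vertices then span an $n$-simplex, so $\dim\ES(E(K_n))\ge n$.

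\emph{Construction.} Fix a nontrivial knot $K_0$. For $i\ge1$, let $K_i$ be a nontrivial $(p_i,q_i)$-cable of $K_{i-1}$, with $q_i\ge2$. Set $K=K_n$ and $E=E(K)$.

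The cabling space $C_{p,q}$ is the exterior in a solid torus $V$ of a $(p,q)$-curve. It is Seifert fibered over an annulus with one exceptional fiber of order $q$, and it is not a product. Hence $E(K_n)$ decomposes along tori $T_1,\dots,T_n$, where $T_i=\partial N(K_{i-1})$ is pushed into $E(K_n)$. The pieces are $E(K_0)$ and the cable spaces $C_1,\dots,C_n$, where $C_n$ is outermost and contains $\partial E$. Each $T_i$ is incompressible: it bounds the exterior of a nontrivial knot on one side, and a cable space, which has incompressible boundary, on the other. $T_i$ is also not boundary-parallel. Adjacent cable spaces have fibers that do not match, since a meridian of the inner solid torus is not the regular fiber. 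So the $T_i$ are the JSJ tori, and they are pairwise non-isotopic by uniqueness of the JSJ decomposition. Actually only non-isotopy is needed, and it can be seen directly: for $i<j$, the region between $T_i$ and $T_j$ contains the Seifert piece $C_i$, which is not $T^2\times I$.

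\emph{The annulus.} In $C_n$, take the cabling annulus $A$: the vertical annulus joining $\partial E$ to $T_n$ and running between the knot $K_n$ and the companion torus, so that $A$ meets $\partial E$ in the cabling slope $p_nq_n$. I will isotope all $T_i$ off $A$ by pushing them slightly inward, which is possible because $A\cap T_n\subset\partial$-side only after the push.

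More carefully, I will instead choose $A$ to be a spanning annulus properly embedded in $C_n$, with $\partial A\subset\partial E\cup T_n$. Then I replace $T_n$ by a parallel copy pushed into $C_{n-1}$ (or into $E(K_0)$ when $n=1$), so that $A$ is disjoint from all the tori. The annulus $A$ is essential in $E$. It is incompressible because its core is a fiber that has infinite order in $\pi_1$. It is not boundary-parallel, since otherwise $C_n$ would be a product or a solid torus, contradicting $q_n\ge2$. The required fact is the classical classification of vertical annuli in Seifert pieces, which I will cite.

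\emph{Non-isotopy and conclusion.} $A$ has nonempty boundary, while the $T_i$ are closed, so $A$ is isotopic to no $T_i$. The $T_i$ are pairwise non-isotopic as shown above. So we have $n+1$ distinct, pairwise disjoint vertices, and the flag property gives an $n$-simplex.

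The main obstacle is the essentiality of $A$ in the whole of $E$, rather than just in $C_n$. The step I expect to be hardest is ruling out that $A$ is boundary-compressible or boundary-parallel through the other pieces. I would handle this by noting that $T_n$ is incompressible, so any compressing or boundary-compressing disk can be isotoped into $C_n$, where the Seifert-fibered analysis applies.
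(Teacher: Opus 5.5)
\textbf{The gap is the annulus.} Your treatment of the tori $T_1,\dots,T_n$ (nested, hence disjoint; essential; pairwise non-isotopic because no cable space is $T^2\times I$) is essentially the paper's argument. There is one small imprecision: inside $E$, the inner side of $T_i$ is $C_i\cup\cdots\cup C_n$, not a single cable space. Incompressibility on that side holds, for instance, because a meridian disk of $N(K_{i-1})$ meets $K_n$ algebraically $q_iq_{i+1}\cdots q_n\neq0$ times.

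A vertex of $\ES(E)$ is a surface properly embedded in $E$, so all of its boundary must lie on $\partial E$. The annulus you settle on is a vertical spanning annulus of $C_n$ with $\partial A\subset\partial E\cup T_n$. One of its boundary circles lies on $T_n\subset\Int E$. It is properly embedded in $C_n$ but not in $E$, so it is not a vertex at all. Pushing $T_n$ into $C_{n-1}$ only leaves that circle stranded in the interior.

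Nor can you fix this by capping the circle off in $E(K_{n-1})$. That circle is a regular fibre of $C_n$, a curve $p_n\mu+q_n\lambda$ on $\partial N(K_{n-1})$; in your convention $q_n\ge2$ is the winding number, so $p_n\neq0$. It is therefore nonzero in $H_1(E(K_{n-1}))\cong\mathbb Z$ and bounds no orientable surface there. Your essentiality argument (``otherwise $C_n$ would be a product or a solid torus'') is thus aimed at the wrong surface.

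\textbf{The fix} is the annulus the paper uses. Let $V'\subset\Int N(K_{n-1})$ be the concentric solid torus on whose boundary $K_n$ lies, and take $A=\partial V'\setminus\Int N(K_n)$. In the base orbifold of $C_n$, this is the preimage of an arc with both ends on the $\partial E$ boundary circle that separates the cone point from the $T_n$ circle. It is not the preimage of an arc joining the two boundary circles.

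Both components of $\partial A$ lie on $\partial E$, with slope $p_nq_n$. Moreover $A\setminus\partial A\subset\Int C_n$, so $A$ misses every $T_i$ with no pushing required.

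$A$ is essential for the following reasons.
\begin{enumerate}[label=(\roman*),leftmargin=*]
\item It is incompressible, because its core is a nontrivial slope on the incompressible torus $\partial E$.
\item It is not boundary-parallel. It cuts $E$ into a solid torus in which its core winds $q_n\ge2$ times, which is not a product $A\times I$. The other piece is homeomorphic to $E(K_{n-1})$, which is not a solid torus because $K_{n-1}$ is nontrivial. This is exactly where nontriviality of the companion enters, as in the paper.
\item For an annulus in an irreducible exterior with incompressible boundary, incompressible and not boundary-parallel implies boundary-incompressible.
\end{enumerate}

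With this $A$ the rest of your argument goes through. The flag property is superfluous: once you exhibit simultaneously disjoint representatives, they span a simplex by the definition of $\ES(E)$.
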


Thus $\ES(E)$ is a discrete set only in low complexity.  A first
nondiscrete example --- a single edge, spanned by the companion torus
and the cabling annulus of a cable knot --- is computed in
Section~\ref{subsec:cable-first-edge} and drawn in
Figure~\ref{fig:cable}.

The third main result is the connected-sum calculation, where an
infinite-order twist is visible rather than hidden.

\begin{theoremA}[A visible twist for a connected sum]
\label{thmC}
Let $K=K_1\mathbin\#K_2$ with $K_1,K_2$ nontrivial prime knots.  Then
the standard decomposing annulus $A$ represents the unique annulus
vertex of $\ES^{\mathrm{per}}(E)$, and is therefore fixed by every
peripheral-data-preserving automorphism.  If moreover both factors are
nonfibered, then in Banks's winding coordinates for the Kakimizu
complex $\operatorname{MS}(K)$ the Dehn twist $\tau_A$ acts by
\[
  \tau_A\cdot\Psi(R_1,R_2,n)=\Psi(R_1,R_2,n+1),
\]
so $\langle\tau_A\rangle\cong\mathbb Z$ acts freely on that
Kakimizu layer.  Since $\operatorname{MS}(K)\subseteq\ES(E)$ by
Proposition~\ref{prop:kakimizu-subcomplex}, the same cyclic subgroup is
already visible in the action on $\ES(E)$.
\end{theoremA}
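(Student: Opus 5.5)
The argument splits into the claim about the annulus vertex and the claim about the twist action on the Kakimizu layer.

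\emph{The annulus vertex.} I would first show that every essential annulus in $E=E(K_1\#K_2)$ is isotopic to the decomposing annulus $A$.

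1. Take an essential annulus $A'\subset E$. Its boundary consists of essential curves on $\partial E$ of a common slope. An essential annulus in a knot exterior has either meridional or integral boundary slope.
2. Make $A'$ intersect $A$ minimally. Since both factors are prime, each $E(K_i)$ is the exterior of a prime knot cut along a meridional annulus, i.e.\ the exterior of $K_i$ viewed from the decomposing sphere. The pieces of $A'$ in $E(K_i)$ are then essential annuli or essential discs.
3. A standard innermost/outermost argument then forces $A'\cap A=\varnothing$. The alternative is that $A'$ is a spanning annulus in some $E(K_i)$, which would make $K_i$ a cable or torus knot with a nonmeridional annulus. Such an annulus cannot cross the meridional annulus $A$ essentially.
4. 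Once $A'$ is disjoint from $A$, it lies in one factor $E(K_i)$ with meridional boundary. Primeness of $K_i$ (no essential meridional annulus) then makes it parallel to $A$.

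The cited classification of essential annuli in composite exteriors (JSJ: the composing space is $P\times S^1$ for two factors) makes this cleaner. I would use the characteristic submanifold: essential annuli in a knot exterior whose boundary slope is meridional are vertical in the $P\times S^1$ piece, and in the pair-of-pants piece the only vertical essential annulus meeting $\partial E$ twice is $A$.

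It follows that $A$ is the unique vertex of $\ES^{\mathrm{per}}(E)$ labelled ``annulus''. Every peripheral-data-preserving automorphism preserves labels, so it fixes that vertex.

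\emph{The twist action.} For nonfibered $K_1,K_2$, recall Banks's description of $\operatorname{MS}(K)$. Every minimal genus Seifert surface of $K$ is, up to isotopy, a boundary sum of minimal genus Seifert surfaces $R_i$ of $K_i$. There is an extra integer ``winding'' $n$ recording how many times the surface spins around $A$, i.e.\ the relative position of the two summand pieces along the $S^1$-direction of $A$. This gives the parametrisation $\Psi(R_1,R_2,n)$.

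I would then compute directly that $\tau_A$ carries a surface in position $n$ to one in position $n+1$. The twist along $A$ is supported in a collar $A\times[0,1]$, and it rotates one side relative to the other by one full turn in the core direction. Cutting the Seifert surface along $A$, this changes the winding coordinate by exactly one and leaves $R_1$ and $R_2$ unchanged. Normalising sign conventions (orientation of $A$ versus Banks's orientation of the winding) gives $+1$ rather than $-1$.

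Freeness follows from injectivity of $\Psi$ in $n$. Banks shows distinct $n$ give non-isotopic surfaces for nonfibered factors, so $\tau_A^k$ moves every vertex when $k\neq0$. Because the Kakimizu layer is a full subcomplex of $\ES(E)$ (Proposition~\ref{prop:kakimizu-subcomplex}) and $\Phi_{\ES}$ restricts to the geometric action on it, $\Phi_{\ES}(\tau_A)$ has infinite order. So $\langle\tau_A\rangle\cong\mathbb Z$ injects into $\Image\Phi_{\ES}$.

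The main obstacle is the precise matching of $\tau_A$ with Banks's shift: one must check that the coordinate $n$ is defined exactly by the twisting around $A$, so that the twist does not also act nontrivially on the $R_i$.
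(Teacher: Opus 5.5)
Your treatment of the twist is the paper's argument. The proof of Proposition~\ref{prop:kakimizu-spinning} likewise notes that $t_A$ is the identity outside $N(A)$, so it fixes $R_1$ and $R_2$, and that on $N(A)$ it adds one full turn to Banks's joining rectangle. Freeness then comes from injectivity of $\Psi$, and visibility on $\ES(E)$ from Proposition~\ref{prop:kakimizu-subcomplex}.

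The annulus classification is where you differ. The paper (Theorem~\ref{thm:composite-annulus}) quotes the classical dichotomy: an essential annulus in a knot exterior is either a cabling annulus, which would make $K$ itself a torus or cable knot, or it is meridional and caps off to a decomposing sphere. Since torus and cable knots are prime, only the second case occurs. Schubert's disjointness argument plus primality of the factors then makes every decomposing sphere isotopic to the standard one. Your Step~4 is the annular form of that last step.

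Your JSJ variant is a genuinely different and tidier route. Once an essential annulus is enclosed in the root piece $S^1\times P$, it is vertical, and the unique essential arc of the pair of pants with both ends on the outer boundary gives $A$. This route also explains why only the meridional slope can occur: the fibre on $\partial E$ is $\mu$, and $S^1\times P$ contains no horizontal annuli. It also extends to more factors by counting arcs in a many-punctured disc. The cost is importing the enclosing theorem and Budney's description of the root piece, whereas the paper stays within classical knot theory.

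Step~3 of your cut-and-paste version fails as written. If $\Sl(A')\neq\mu$, then $\partial A'$ must meet the meridians $\partial A$ on $\partial E$. So no innermost/outermost argument can make $A'\cap A$ empty. Your ``alternative'', a nonmeridional annulus lying inside a single $E(K_i)$, cannot occur for the same reason. The appeal to $K_i$ being a cable or torus knot is therefore misplaced: the relevant knot in the dichotomy is $K$, not $K_i$.

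The nonmeridional case must be excluded \emph{before} Step~3. There are three ways to do it:
\begin{itemize}
\item apply the paper's dichotomy to $K$;
\item run your JSJ argument for all essential annuli, not only the meridional ones;
\item use the discs you mention in Step~2 directly, since they are exactly the contradiction.
\end{itemize}
For the third option: in minimal position, $A\cap A'$ consists of arcs spanning both annuli. Each rectangle of $A'$ cut along $A$ is then a disc in some $E(K_i)$ whose boundary alternates between spanning arcs of the copy of $A$ and spanning arcs of $\partial E\cap E(K_i)$. That boundary crosses each component of $\partial A$ twice in the same direction, so it is essential on $\partial E(K_i)$. This contradicts nontriviality of $K_i$. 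With that repair the annulus part, and hence the whole proposal, goes through.
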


\subsection{Scope and context}
\label{subsec:relation}

The surface classifications, symmetry groups, and $3$-manifold
structure theorems used below are classical and are cited where they
enter: Seifert-fibered surface theory \cite{HatcherNotes,Waldhausen1967},
Hatcher--Thurston's two-bridge census \cite{HatcherThurston},
Jaco--Shalen--Johannson theory \cite{JS,Johannson}, hyperbolic rigidity
\cite{Thurston}, Schubert prime decomposition \cite{Schubert}, and
Banks's connected-sum model for Kakimizu complexes
\cite{BanksConnectedSum}.  The contribution here is the common
image--kernel--reconstruction viewpoint, the elementary structural
results that make the model calculations uniform, and a set of
computable problems and recipes for further examples.

The closest general precedent for the underlying complex is
Schultens's surface complex $\mathcal S(M)$
\cite{SchultensSurfaceComplex}.  Proposition~\ref{prop:ES-flag}
identifies $\ES(E)$ with her initial complex $\mathcal S_0(E)$.  Zhang
and Guo \cite{ZhangGuo} study a related boundary-bearing complex for a
pair $(M,S)$; their contractibility theorem assumes that $S$ is
compressible, so it does not apply to the boundary of a nontrivial
knot exterior.  Further precedents include incompressible-surface
complexes of handlebodies
\cite{CharitosPapadoperakisTsapogas,CharitosIncompressible} and
Heegaard-theoretic disjointness complexes
\cite{KorkmazSchleimer,Schleimer}.

Kakimizu's complexes $\operatorname{IS}(K)$ and
$\operatorname{MS}(K)$ \cite{Kakimizu,KakimizuClassification,
PrzytyckiSchultens} occur naturally as full subcomplexes of $\ES(E)$;
see Proposition~\ref{prop:kakimizu-subcomplex}.  Thus a symmetry
detected on a Kakimizu layer is automatically detected by the full
essential-surface action, although recognizing that layer intrinsically
inside the bare complex is a separate problem.

The analogy with Ivanov theory should not be read as a claim of a
general rigidity theorem.  More broadly, rigidity questions for curve-
and region-type complexes belong to the circle of ideas often called
Ivanov's metaconjectural programme; see, for example,
\cite{AramayonaSoutoSurvey,McCarthyPapadopoulos,BrendleMargalit}.
These works also underscore a useful caution here: enriching a
combinatorial object does not by itself guarantee that all of its
automorphisms are geometric.  The model examples below show both
nongeometric automorphisms and nontrivial kernels.  Our aim is instead
to isolate which small amount of information repairs which failure,
and to state the remaining recognition and realization problems
cleanly.

Figure~\ref{fig:roadmap} summarizes the reading order.  The hands-on
examples and the visibility theorem form the self-contained core; the
later sections add connected sums, JSJ structure, and the roadmap
toward rigidity.

\begin{figure}[htb]
\centering
\begin{tikzpicture}[
  node distance=6mm,
  box/.style={draw=greyish,rounded corners=2pt,align=center,
              inner sep=4pt,font=\small,minimum height=8mm},
  hi/.style={box,draw=surfA!70!black,fill=lightfill,very thick},
  ap/.style={box,draw=greyish,dashed,fill=white},
  ar/.style={-{Stealth[length=4pt]},greyish}
]
\node[hi] (s2) at (0,0) {\S\ref{sec:firstlook}\\ examples\\ by hand};
\node[box] (s34) at (3.4,0) {\S\ref{sec:definitions}--\S\ref{sec:decorations}\\ basic structure,\\ three levels};
\node[hi] (s6) at (6.9,0) {\S\ref{sec:visibility}\\ visibility\\ theorem};
\node[box] (s78) at (6.9,-2.1) {\S\ref{sec:torus-knots}--\S\ref{sec:figure-eight}\\ model\\ calculations};
\node[box] (s9) at (3.4,-2.1) {\S\ref{sec:composite}\\ connected\\ sums};
\node[box] (s10) at (0,-2.1) {\S\ref{sec:satellite}\\ satellites,\\ JSJ};
\node[box] (s11) at (0,-4.2) {\S\ref{sec:reduction}\\ toward\\ rigidity};
\node[box] (s12) at (3.4,-4.2) {\S\ref{sec:problems}\\ graded\\ problems};
\node[ap] (app) at (6.9,-4.2) {Appendices\\ realization, atlas,\\ two-bridge recipe};
\draw[ar] (s2) -- (s34);
\draw[ar] (s34) -- (s6);
\draw[ar] (s6) -- (s78);
\draw[ar] (s78) -- (s9);
\draw[ar] (s9) -- (s10);
\draw[ar] (s10) -- (s11);
\draw[ar] (s11) -- (s12);
\draw[ar] (s12) -- (app);
\draw[ar,dashed] (s2) to[out=-135,in=135] (s11);
\end{tikzpicture}
\caption{A map of the paper.  The shaded boxes are the easiest entry
points: examples first, then the visibility theorem.  The dashed arrow
indicates that a reader interested mainly in the general rigidity
programme may pass from the examples to Section~\ref{sec:reduction}.}
\label{fig:roadmap}
\end{figure}

\section{A first look: three complexes you can draw by hand}
\label{sec:firstlook}

This section is an unhurried introduction.  Everything in it is
recomputed carefully later; here we only want the reader to see the
objects.  Proofs of the surface censuses are postponed to
Sections~\ref{sec:torus-knots}--\ref{sec:satellite}, and the general
definitions to Section~\ref{sec:complexes}.

\subsection{The exterior, its boundary torus, and slopes}
\label{subsec:slopes}

Let $K\subset S^3$ be a nontrivial knot and let
$E=E(K)=S^3\setminus\Int N(K)$ be its exterior, a compact orientable
$3$-manifold whose boundary is a torus.  On $\partial E$ we single out
two curves: the \emph{meridian} $\mu$, which bounds a disk in the
solid torus $N(K)$, and the \emph{preferred longitude} $\lambda$,
characterized by $\lk(K,\lambda)=0$.  Every isotopy class of essential
simple closed curve on $\partial E$ --- a \emph{slope} --- is then
$p\mu+q\lambda$ for coprime integers $p,q$, and we write it as the
fraction $p/q$.  Slopes are unoriented, so $p\mu+q\lambda$ and
$-p\mu-q\lambda$ are the same slope (Figure~\ref{fig:slopes}).

\begin{figure}[htb]
\centering
\begin{tikzpicture}[scale=1.0]
% ---- left: boundary torus cut open ----
\begin{scope}[xshift=0cm]
  \fill[lightfill] (0,0) rectangle (3.2,2.4);
  \draw[thick] (0,0) rectangle (3.2,2.4);
  \draw[surfC,very thick] (0,0) -- (3.2,1.6);
  \draw[surfC,very thick] (0,0.8) -- (3.2,2.4);
  \node[surfC,above left,font=\small] at (3.15,1.45) {$p/q$};
  \draw[surfA,very thick,-{Stealth[length=5pt]}] (0,0) -- (0,2.4);
  \draw[surfB,very thick,-{Stealth[length=5pt]}] (0,0) -- (3.2,0);
  \node[surfA,left] at (-0.05,1.2) {$\mu$};
  \node[surfB,below] at (1.6,-0.12) {$\lambda$};
  \node[font=\small] at (1.6,-1.0) {$\partial E$ cut open along $\mu,\lambda$};
\end{scope}
% ---- right: the exterior ----
\begin{scope}[xshift=8.2cm,yshift=1.2cm]
  \fill[greyish,opacity=0.13] (0,0) ellipse (1.9 and 1.15);
  \fill[white] (0,0) ellipse (1.0 and 0.55);
  \draw[thick,greyish] (0,0) ellipse (1.9 and 1.15);
  \draw[thick,greyish] (0,0) ellipse (1.0 and 0.55);
  \draw[surfB,very thick,densely dotted] (0,0) ellipse (1.45 and 0.85);
  \draw[surfA,very thick] (1.45,0) ellipse (0.45 and 0.15);
  \node[surfA,right] at (1.98,0.02) {$\mu$};
  \node[surfB,left] at (-1.92,0.42) {$\lambda$};
  \node[greyish,font=\small] at (0,-0.98) {$N(K)$};
  \node[font=\small] at (0,-1.62) {$E(K)=S^3\setminus\Int N(K)$};
\end{scope}
\end{tikzpicture}
\caption{Slopes on the boundary torus.  Left: $\partial E$ cut open
along $\mu$ and $\lambda$; a slope $p/q$ is a line of rational
direction.  Right: the same data on the exterior.  Two essential
curves of distinct slopes on a torus have nonzero geometric
intersection number --- this single fact is responsible for most of
the combinatorics below.}
\label{fig:slopes}
\end{figure}

A properly embedded compact connected surface $F\subset E$ is
\emph{essential} if it is two-sided, orientable, incompressible,
boundary-incompressible, not boundary-parallel, and is neither a
sphere nor a disk (Definition~\ref{def:essential}).  Its boundary
$\partial F$, if nonempty, is a union of disjoint essential curves on
the torus $\partial E$; disjoint essential curves on a torus are
parallel, so all of them have one and the same slope.  That slope is
the \emph{boundary slope} $\Sl(F)$.

\begin{keybox}
\textbf{The one fact to remember.}  If two essential surfaces have
disjoint representatives, then their boundaries are disjoint on the
torus $\partial E$, hence parallel.  So:
\[
  \Sl(F)\neq\Sl(F')\ \Longrightarrow\
  \text{$[F]$ and $[F']$ are \emph{not} joined by an edge.}
\]
Different slopes can never coexist in one simplex.
\end{keybox}

\subsection{Torus knots: two isolated points}
\label{subsec:torus-first}

Let $K=T(p,q)$ with $1<p<q$ and $\gcd(p,q)=1$.  The exterior $E$ is
Seifert fibered over the disk with two cone points of orders $p$ and
$q$, and every essential surface in such a manifold is isotopic either
to a \emph{vertical} surface (a union of fibres) or to a
\emph{horizontal} one (transverse to every fibre)
\cite{HatcherNotes,Waldhausen1967}.  Working in the base orbifold
$D^2(p,q)$ makes the census immediate (Figure~\ref{fig:torusknot}):

\begin{itemize}[leftmargin=*]
\item vertical surfaces sit over arcs and loops in $D^2(p,q)$.  A loop
around one cone point gives a compressible torus, a loop around both
gives a boundary-parallel one; an arc cutting off a disk with no cone
point gives a boundary-parallel annulus.  The only surviving
possibility is the arc separating the two cone points, and its
preimage is the \emph{cabling annulus} $A$, of boundary slope $pq$.
\item horizontal surfaces are fibres of a fibration of $E$ over $S^1$;
since $H^1(E;\mathbb Z)\cong\mathbb Z$ there is only one up to isotopy,
namely the \emph{fibre surface} $F_0$, of boundary slope $0$.
\end{itemize}

\noindent
So $\ES(E)$ has exactly two vertices.  They cannot be joined by an
edge, because their slopes $0$ and $pq$ differ.  Hence:
\[
  \ES\bigl(E(T(p,q))\bigr)=\{\,[F_0]\,\}\ \sqcup\ \{\,[A]\,\}.
\]

\begin{figure}[htb]
\centering
\begin{tikzpicture}[scale=1.0]
% base orbifold
\begin{scope}[xshift=0cm]
  \fill[lightfill] (0,0) circle (1.35);
  \draw[thick] (0,0) circle (1.35);
  \fill (-0.55,0) circle (2pt); \node[below] at (-0.55,-0.08) {\small $p$};
  \fill (0.55,0) circle (2pt);  \node[below] at (0.55,-0.08) {\small $q$};
  \draw[surfB,very thick] (0,1.35) .. controls (0.18,0.5) and (-0.18,-0.5) .. (0,-1.35);
  \node[surfB] at (0.5,0.85) {\small $\alpha$};
  \node at (0,-1.85) {\small base orbifold $D^2(p,q)$};
\end{scope}
% cabling annulus preimage
\begin{scope}[xshift=4.6cm]
  \fill[surfB,opacity=0.16] (-0.5,-1.1) rectangle (0.5,1.1);
  \draw[surfB,very thick] (-0.5,-1.1) rectangle (0.5,1.1);
  \draw[surfB,very thick,densely dashed] (-0.5,1.1) .. controls (0,0.86) .. (0.5,1.1);
  \draw[surfB,very thick,densely dashed] (-0.5,-1.1) .. controls (0,-1.34) .. (0.5,-1.1);
  \node[surfB] at (0,0) {\small $A$};
  \node at (0,-1.85) {\small cabling annulus, $\Sl=pq$};
\end{scope}
% fibre surface
\begin{scope}[xshift=8.6cm]
  \fill[surfA,opacity=0.14] (0,0) ellipse (1.15 and 0.95);
  \draw[surfA,very thick] (0,0) ellipse (1.15 and 0.95);
  \draw[surfA,thick] (-0.62,0.28) .. controls (-0.38,0.0) and (-0.02,0.0) .. (0.22,0.28);
  \draw[surfA,thick] (-0.52,0.19) .. controls (-0.32,0.36) and (-0.08,0.36) .. (0.12,0.19);
  \fill[white] (0.0,-0.48) circle (0.24);
  \draw[surfA,very thick] (0.0,-0.48) circle (0.24);
  \node[surfA,font=\small] at (0,-1.22) {$\partial F_0$};
  \node at (0,-1.85) {\small fibre surface $F_0$, $\Sl=0$};
\end{scope}
% the complex
\begin{scope}[yshift=-3.0cm,xshift=4.3cm]
  \node at (-2.2,0) {$\ES(E)\;=$};
  \fill[surfA] (-0.6,0) circle (2.4pt);  \node[below=2pt,surfA] at (-0.6,0) {\small $[F_0]$, $0$};
  \fill[surfB] (0.9,0) circle (2.4pt);   \node[below=2pt,surfB] at (0.9,0) {\small $[A]$, $pq$};
  \node at (2.9,0) {\small (no edge: $0\neq pq$)};
\end{scope}
\end{tikzpicture}
\caption{The torus-knot complex.  The only essential arc in the base
orbifold separates the two cone points; its preimage is the cabling
annulus.  The unique horizontal surface is the fibre surface.  The
complex is two isolated points.}
\label{fig:torusknot}
\end{figure}

Now look at the symmetry.  The mapping class group of $E(T(p,q))$ is
$\mathbb Z/2$, generated by a strong inversion, and \emph{every} class
is orientation-preserving (Lemma~\ref{lem:torus-mcg}).  An
orientation-preserving homeomorphism fixes every unoriented slope, so
it fixes both vertices; hence the entire mapping class group is
invisible.  Meanwhile $\Aut(\ES(E))\cong\mathbb Z/2$: there is a
transposition of the two points, induced by no homeomorphism, since a
homeomorphism cannot send an annulus to a positive-genus surface.

\begin{keybox}
Already in the simplest example the two failures appear together.
\emph{The kernel is nontrivial}, and its generator is a finite
symmetry rather than a twist, so no refinement by twist coordinates
will remove it.  \emph{The image is not everything}: the bare
complex admits a nongeometric automorphism.  Adding the surface type
--- or, alternatively, the boundary slope --- kills the false
automorphism at once.
\end{keybox}

\subsection{The figure-eight knot: three points and a mirror}
\label{subsec:fig8-first}

Now let $K=4_1$ and $E_8=E(4_1)$.  Hatcher--Thurston's census for
two-bridge knots \cite{HatcherThurston} gives three essential surfaces
up to isotopy (Lemma~\ref{lem:figure-eight-census}):
a once-punctured torus $F$ of slope $0$, and two twice-punctured tori
$S_+$ and $S_-$ of slopes $+4$ and $-4$.  Their slopes are pairwise
distinct, so again there are no edges:
\[
  \ES(E_8)=\{\,[F]\,\}\ \sqcup\ \{\,[S_+]\,\}\ \sqcup\ \{\,[S_-]\,\}.
\]

The figure-eight knot is amphichiral, so $E_8$ admits an
orientation-reversing self-homeomorphism, and this acts on slopes by
$s\mapsto-s$.  It therefore fixes $[F]$ and interchanges $[S_+]$ and
$[S_-]$ (Figure~\ref{fig:fig8}).  Every orientation-preserving class,
by contrast, fixes all three vertices.  Since $\Mod^{\pm}(E_8)$ is
dihedral of order eight with orientation-preserving subgroup
$(\mathbb Z/2)^2$ \cite{HeusenerMunozPorti,SakumaSurvey}, we get
\[
  \Image\Phi\cong\mathbb Z/2,
  \qquad
  \mathcal K\cong(\mathbb Z/2)^2 ,
  \qquad
  \Mod^{\pm}(E_8)/\mathcal K\;\cong\;\mathbb Z/2 .
\]
On the other hand $\Aut(\ES(E_8))\cong S_3$, because three isolated
points have six symmetries.  Four of the six are nongeometric.

\begin{figure}[htb]
\centering
\begin{tikzpicture}[scale=1.0]
\begin{scope}
  \draw[greyish,densely dashed] (-1.35,0) -- (4.55,0);
  \node[greyish,right,font=\small] at (4.6,0) {mirror axis};
  \fill[surfA] (0,0) circle (2.6pt);
  \node[below=4pt,surfA,font=\small] at (0,0) {$[F]$, \ $\Sl=0$};
  \fill[surfB] (2.6,0.9) circle (2.6pt);
  \node[above=4pt,surfB,font=\small] at (2.6,0.9) {$[S_+]$, \ $\Sl=+4$};
  \fill[surfC] (2.6,-0.9) circle (2.6pt);
  \node[below=4pt,surfC,font=\small] at (2.6,-0.9) {$[S_-]$, \ $\Sl=-4$};
  \draw[{Stealth[length=5pt]}-{Stealth[length=5pt]},greyish,thick]
        (3.35,0.75) to[out=-25,in=25] (3.35,-0.75);
  \node[greyish,font=\small] at (1.3,1.9) {orientation-reversing: $s\mapsto -s$};
\end{scope}
\begin{scope}[xshift=9.4cm]
  \node[font=\small,align=left] at (0,1.05)
    {$\Aut(\ES(E_8))\cong S_3$ \ (order $6$)};
  \node[font=\small,align=left] at (0,0.35)
    {$\Image\Phi_{\ES}\cong\mathbb Z/2$ \ (order $2$)};
  \node[font=\small,align=left] at (0,-0.35)
    {$\Mod^{\pm}(E_8)\cong D_4$ \ (order $8$)};
  \node[font=\small,align=left] at (0,-1.05)
    {$\mathcal K\cong(\mathbb Z/2)^2$ \ (order $4$)};
\end{scope}
\end{tikzpicture}
\caption{The figure-eight complex: three isolated vertices, one per
boundary slope.  The only geometrically realized nontrivial symmetry
is the mirror, which reverses the sign of every slope.  Four of the
six combinatorial symmetries are not geometric, and half of the
mapping class group is invisible.}
\label{fig:fig8}
\end{figure}

Comparing the two examples suggests the mechanism which
Section~\ref{sec:visibility} isolates.  In both cases each slope
carries exactly one surface, so a homeomorphism can only permute
vertices the way it permutes slopes --- and it permutes slopes in only
one of two ways, according to whether it preserves or reverses
orientation.  That is the whole story, and it is the content of
Theorem~\ref{thmA}.

\subsection{A cable knot: the first edge}
\label{subsec:cable-first-edge}

The two examples above are discrete, and it would be easy to conclude
that $\ES(E)$ is always a set of points.  It is not.

Let $J$ be a nontrivial knot and let $K=C_{p,q}(J)$ be a nontrivial
cable: place $K$ on the boundary of a concentric solid torus
$V'\subset\Int N(J)$, running $p\ge2$ times longitudinally and $q$
times meridionally.  Then $E(K)$ contains two obviously disjoint
essential surfaces (Figure~\ref{fig:cable}):

\begin{itemize}[leftmargin=*]
\item the \emph{companion torus} $T=\partial N(J)$, which is the JSJ
torus of $E(K)$ and is essential because $J$ is nontrivial;
\item the \emph{cabling annulus}
$A=\partial V'\setminus\Int N(K)$, properly embedded in $E(K)$ with
$\partial A\subset\partial E(K)$ of slope $pq$.
\end{itemize}

Since $A\subset\partial V'$ and $V'\subset\Int N(J)$, the two surfaces
are disjoint; and they are certainly non-isotopic, one being closed
and the other not.  So $\{[A],[T]\}$ is an edge of $\ES(E(K))$.
Iterating the construction produces higher-dimensional simplices, and
this is how Theorem~\ref{thmB} is proved in
Section~\ref{subsec:dimension}.

\begin{figure}[htb]
\centering
\begin{tikzpicture}[scale=1.0]
% nested tori, drawn in cross-section (two concentric annuli)
\draw[thick,greyish] (0,0) ellipse (2.5 and 1.55);

\draw[surfC,very thick] (0,0) ellipse (2.5 and 1.55);
\draw[thick,greyish,densely dotted] (0,0) ellipse (1.55 and 0.95);
% the knot K on partial V', drawn as small circles
\fill[white] (1.55,0) circle (0.2);
\draw[thick] (1.55,0) circle (0.2);
\fill[white] (-1.55,0) circle (0.2);
\draw[thick] (-1.55,0) circle (0.2);
\node[font=\small] at (1.74,-0.62) {$N(K)$};
% cabling annulus: the part of partial V' outside N(K)
\draw[surfB,very thick] (0,0) ellipse (1.55 and 0.95);
\fill[white] (1.55,0) circle (0.2);
\draw[thick] (1.55,0) circle (0.2);
\fill[white] (-1.55,0) circle (0.2);
\draw[thick] (-1.55,0) circle (0.2);
\node[surfB] at (0,-1.2) {\small $A\subset\partial V'$};
\node[surfC] at (0,-1.85) {\small $T=\partial N(J)$};
% right: the edge
\begin{scope}[xshift=6.4cm]
  \node at (-1.5,0) {$\ES(E(K))\supseteq$};
  \fill[surfB] (0.35,0) circle (2.4pt);
  \fill[surfC] (1.85,0) circle (2.4pt);
  \draw[thick] (0.35,0) -- (1.85,0);
  \node[below=3pt,surfB] at (0.35,0) {\small $[A]$};
  \node[below=3pt,surfC] at (1.85,0) {\small $[T]$};
  \node[font=\small,align=center] at (0.6,-1.15) {an honest edge};
\end{scope}
\end{tikzpicture}
\caption{A cable knot $K=C_{p,q}(J)$ over a nontrivial companion $J$,
in schematic cross-section.  The cabling annulus $A$ lies on the inner
torus $\partial V'$ and the companion torus $T=\partial N(J)$ lies
outside it, so the two are disjoint and span an edge.  Iterating the
cabling gives simplices of every dimension
(Proposition~\ref{prop:dimension}).}
\label{fig:cable}
\end{figure}

\begin{proposition}[Kakimizu layers are natural subcomplexes]
\label{prop:kakimizu-subcomplex}
Let $K$ be a nontrivial knot with exterior $E$.  By a
\emph{spanning-surface vertex} we mean a vertex represented by an
orientable spanning surface (oriented in Kakimizu's convention
\cite{Kakimizu}), equivalently a connected properly embedded surface whose
boundary is a single preferred longitude on $\partial E$.  Kakimizu's
complex $\operatorname{IS}(K)$ of incompressible spanning surfaces is
the full subcomplex of $\ES(E)$ spanned by these vertices, and
$\operatorname{MS}(K)$ is the full subcomplex of
$\operatorname{IS}(K)$ spanned by the minimal-genus vertices.  Thus
there are natural simplicial inclusions
\[
  \operatorname{MS}(K)\subseteq\operatorname{IS}(K)\subseteq\ES(E).
\]
These inclusions are equivariant for the natural mapping-class-group
actions.
\end{proposition}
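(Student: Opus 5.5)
We will check three things in turn: that the vertex sets agree, that the simplex conditions agree, and that the inclusions are equivariant.

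\textbf{Vertices.} A vertex of Kakimizu's complex $\operatorname{IS}(K)$ is an isotopy class of incompressible Seifert surface $R$ for $K$. Its restriction $R\cap E$ is a connected, orientable, properly embedded surface whose boundary is one preferred longitude. We check that $R\cap E$ satisfies Definition~\ref{def:essential}.
\begin{itemize}[leftmargin=*]
\item It is two-sided because it is orientable in an orientable manifold.
\item It is incompressible by hypothesis.
\item It is not a disk, since $K$ is nontrivial.
\item It is not boundary-parallel, since a surface with one boundary curve cannot be parallel into the torus $\partial E$. Such a surface would be an annulus, which has two boundary curves.
\item It is boundary-incompressible by the standard argument. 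A $\partial$-compressing disk $D$ meets $\partial E$ in an arc $\beta$ with endpoints on the single curve $\partial R$. Incompressibility of $R$, together with the fact that the boundary torus is incompressible in $E$, makes the outcome of a $\partial$-compression either a compression or a boundary-parallel disk. Both are excluded.
\end{itemize}
Conversely, an essential surface whose boundary is a single preferred longitude is, after capping with an annulus in $N(K)$, an incompressible Seifert surface. So the spanning-surface vertices of $\ES(E)$ are exactly the vertices of $\operatorname{IS}(K)$.

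One point must be checked to make the vertex map injective: isotopy in $E$ and Kakimizu's isotopy notion must agree. Kakimizu uses isotopy in $E$ rel the longitude, with orientation via the convention cited. Since a surface whose boundary is a preferred longitude is nonseparating, we fix orientations by the orientation of $K$. An isotopy of unoriented surfaces in $E$ then preserves this induced orientation, so unoriented and oriented classes coincide. Boundary-slope preservation follows from Lemma~\ref{lem:slope-layer}.

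\textbf{Simplices.} This is the main obstacle. Kakimizu's edge condition is not literal disjointness: two Seifert surfaces span an edge when they have representatives with disjoint interiors, or equivalently when some lift in the infinite cyclic cover is at distance one. The boundaries of such surfaces share the longitude, or at least parallel longitudes. In $E$ we can push the boundary of one surface off the other along $\partial E$ to make them fully disjoint, because boundary curves of slope $0$ are parallel on the torus. Conversely, disjoint surfaces in $E$ with parallel longitude boundaries give disjoint interiors after extending through the collar $N(K)$.

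So edges agree. Both complexes are flag: $\ES(E)$ by Proposition~\ref{prop:ES-flag}, and Kakimizu's complex by its definition via pairwise disjointness, which is realizable simultaneously (Przytycki--Schultens). Agreeing edges therefore mean agreeing simplices, and $\operatorname{IS}(K)$ is the full subcomplex. The same argument restricted to minimal-genus surfaces shows $\operatorname{MS}(K)$ is full in $\operatorname{IS}(K)$.

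\textbf{Equivariance.} A homeomorphism $h$ of $E$ preserves the meridian by Remark~\ref{rem:GL}, and hence preserves the longitude slope $0$. It therefore carries spanning-surface vertices to spanning-surface vertices, and preserves genus. The action of $h$ on $\ES(E)$ thus restricts to the natural Kakimizu action, with possible orientation reversal handled by Kakimizu's convention.
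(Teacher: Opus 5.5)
Your proposal is correct and follows essentially the same route as the paper: boundary-incompressibility of an incompressible spanning surface makes every vertex of $\operatorname{IS}(K)$ a vertex of $\ES(E)$, the simplex relations coincide, and equivariance is immediate from the natural action on surfaces. The one difference is minor: you match edges, translating disjoint interiors in $S^3$ into disjointness in $E$, and then invoke flagness of both complexes, whereas the paper simply notes that both complexes are defined by simultaneous disjointness of representatives in the exterior, so fullness needs no flag argument.
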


\begin{proof}
For a nontrivial knot, an incompressible orientable spanning surface
is also boundary-incompressible.  Indeed, in an irreducible manifold
with incompressible torus boundary, an incompressible but
boundary-compressible orientable surface is a boundary-parallel disk or
annulus; neither can be a spanning surface of a nontrivial knot.  Hence every vertex of $\operatorname{IS}(K)$ is a vertex
of $\ES(E)$.  The simplices in both complexes are defined by the same
condition --- the existence of simultaneously disjoint representatives
in the knot exterior \cite{Kakimizu,PrzytyckiSchultens} --- so the
inclusion is full.  Restricting to minimal genus gives
$\operatorname{MS}(K)$.  Equivariance is immediate from the natural
action on surfaces.
\end{proof}

\begin{remark}[What the cyclic cover is used for]
The infinite cyclic cover remains fundamental in the study of
Kakimizu complexes: it supplies useful distance and ordering
constructions and is central in many proofs.  It is not, however, a
different edge relation for the knot Kakimizu complex itself.  The
point here is therefore not to contrast two notions of disjointness,
but to regard the Kakimizu complexes as distinguished spanning-surface
layers inside the much larger all-essential-surface complex.
\end{remark}

\subsection{Summary of the first look}

\begin{center}
\small
\renewcommand{\arraystretch}{1.2}
\begin{tabular}{@{}lccc@{}}
\toprule
 & $T(p,q)$ & $4_1$ & $C_{p,q}(J)$, $J\ne$ unknot \\
\midrule
vertices of $\ES(E)$ & $2$ & $3$ & $\ge2$ \\
boundary slopes & $0,\,pq$ & $0,\,\pm4$ & includes $0,\,pq$ \\
simplices of dimension $\ge1$ & none & none & at least one edge\\
$\Aut(\ES(E))$ & $\mathbb Z/2$ & $S_3$ & --- \\
$\Mod^{\pm}(E)$ & $\mathbb Z/2$ & $D_4$ (order $8$) & --- \\
geometric image & $1$ & $\mathbb Z/2$ & --- \\
kernel & $\mathbb Z/2$ & $(\mathbb Z/2)^2$ & --- \\
\bottomrule
\end{tabular}
\end{center}

\noindent
Two morals, both proved in general in Section~\ref{sec:visibility}.
First, when each slope carries one surface, the action can only see
the sign of a slope, so at most a $\mathbb Z/2$ is visible and
everything orientation-preserving is invisible.  Second, the
bare complex has far too many automorphisms; some peripheral information may be needed, and the interesting question is how little suffices.

\subsection{A five-step workflow for a new knot family}
\label{subsec:workflow}

The examples suggest a practical way to enter the subject without
solving the full reconstruction problem at once.  For a new knot
family, separate the calculation into the five steps of
Figure~\ref{fig:family-workflow}.  A complete census of all essential
surfaces is ideal, but it is not always the first thing one needs: a
small invariant collection can already reveal a nongeometric
permutation, an invisible finite symmetry, or a visible twist.

\begin{figure}[htb]
\centering
\begin{tikzpicture}[>=Latex,node distance=5mm,
  box/.style={draw,rounded corners,align=center,inner sep=4.5pt,
  text width=88mm,minimum height=9mm,fill=lightfill}]
\node[box] (a) {1. Vertex census: which essential surfaces occur?};
\node[box,below=of a] (b) {2. Disjointness and layers: which vertices span simplices?};
\node[box,below=of b] (c) {3. Peripheral-data test: which abstract automorphisms survive type and slope?};
\node[box,below=of c] (d) {4. Geometric action: how does $\Mod^{\pm}(E)$ act on the same data?};
\node[box,below=of d] (e) {5. Compare image and kernel; reconstruct JSJ data only when needed.};
\draw[->,thick] (a)--(b); \draw[->,thick] (b)--(c);
\draw[->,thick] (c)--(d); \draw[->,thick] (d)--(e);
\end{tikzpicture}
\caption{A practical workflow for testing a new knot family.  The
five steps separate surface classification, combinatorics, and
mapping-class-group information, so partial progress at any one stage
remains useful.}
\label{fig:family-workflow}
\end{figure}

The model cases illustrate different entry points.  For torus knots
the census has only two vertices and the decisive step is the peripheral-data test.
For connected sums, Banks's winding coordinate exposes an infinite
cyclic twist direction.  In the hyperbolic case, Mostow--Prasad
rigidity makes the kernel finite before the detailed surface census is
known.  For the figure-eight knot all five steps can be completed by
hand.  Appendix~\ref{app:recipe} turns the same workflow into a
concrete Hatcher--Thurston recipe for two-bridge knots.

\section{Conventions}
\label{sec:definitions}

We work in the PL category.  All $3$-manifolds are compact and
orientable unless otherwise stated; homeomorphisms and isotopies are
PL, and all mapping class groups are $\pi_0$ of the relevant groups of
PL homeomorphisms.  This convention is used only to avoid irrelevant
analytic issues in the patterned-ball and hierarchy statements of
Appendix~\ref{app:atlas}.

\begin{definition}[Knot exterior and peripheral marking]
Let $K\subset S^3$ be a knot; see \cite{Rolfsen} for generalities.  Its
exterior is $E(K)=S^3\setminus\Int N(K)$, with torus boundary.  We
write $\mu$ for the meridian slope and $\lambda$ for the preferred
longitude, characterized by $\lk(K,\lambda)=0$; equivalently $\lambda$
is the unique slope which is null-homologous in $E(K)$.  A boundary
slope is written $p\mu+q\lambda$, or $p/q$ after a meridian--longitude
basis is fixed.  Slopes are unoriented.  For nontrivial $K$ the
exterior is irreducible with incompressible boundary
\cite{Rolfsen,Hempel}.
\end{definition}

\begin{definition}[Mapping class group]
For $E=E(K)$ put $\Mod^{\pm}(E)=\pi_0\Homeo(E)$, allowing
orientation-reversing homeomorphisms, and write $\Mod^{+}(E)$ for
the orientation-preserving subgroup.
\end{definition}

\begin{remark}[The sign rule]
\label{rem:GL}
For a nontrivial knot $K$, every self-homeomorphism of $E(K)$
preserves the meridian slope, by the Gordon--Luecke theorem
\cite{GL}, and preserves the longitude slope, since $\lambda$ is
homologically characterized.  In particular the meridian needs no
separate bookkeeping: it is preserved automatically.  Moreover:
\begin{enumerate}[label=(\arabic*),leftmargin=*]
\item an orientation-preserving homeomorphism preserves the boundary
orientation, so acts on $H_1(\partial E;\mathbb Z)$ with determinant
$+1$; since it preserves $\{\pm\mu\}$ and $\{\pm\lambda\}$ it sends
$(\mu,\lambda)$ to $\pm(\mu,\lambda)$ and therefore \emph{fixes every
unoriented slope};
\item an orientation-reversing homeomorphism acts with determinant
$-1$, hence on unoriented slopes by the involution
\[
  p\mu+q\lambda\longmapsto p\mu-q\lambda ,
  \qquad\text{that is,}\qquad s\longmapsto -s ,
\]
which fixes $\mu$ and $\lambda$.
\end{enumerate}
This dichotomy is the engine of Section~\ref{sec:visibility}, and it
is worth stating in words: \emph{a homeomorphism of a knot exterior
either fixes all slopes or reverses all their signs, and which of the
two happens is determined by its orientation behaviour alone.}
\end{remark}

\begin{corollary}[Boundary-slope obstruction to amphichirality]
\label{cor:chirality}
Let $K$ be a nontrivial knot.  If $K$ is amphichiral, then its
boundary-slope set satisfies $S(K)=-S(K)$.  Since $0\in S(K)$ for
every nontrivial knot (a minimal-genus Seifert surface is essential,
Lemma~\ref{lem:finite-dim}), a nontrivial knot whose boundary-slope
set is not symmetric about the origin is chiral.  In particular a
torus knot $T(p,q)$, whose boundary-slope set is $\{0,pq\}$ with
$pq\ne0$ by Lemma~\ref{lem:torus-classification}, is chiral.
\end{corollary}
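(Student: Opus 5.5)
The plan is to read the corollary straight off the sign rule of Remark~\ref{rem:GL}, together with the fact that homeomorphisms carry essential surfaces to essential surfaces.

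\emph{Step 1: an amphichiral knot gives an orientation-reversing self-homeomorphism of $E$.} If $K$ is amphichiral, there is an orientation-reversing homeomorphism $h$ of $S^3$ with $h(K)=K$. After an isotopy we may assume $h(N(K))=N(K)$. Then $h$ restricts to an orientation-reversing homeomorphism $h|_E$ of $E=E(K)$.

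\emph{Step 2: this map sends slopes to their negatives.} Let $s\in S(K)$, realized as the boundary slope of an essential surface $F$. Being essential is defined purely topologically: incompressible, boundary-incompressible, not boundary-parallel, and so on. Hence these properties are preserved by any self-homeomorphism, so $h|_E(F)$ is essential. Its boundary slope is $h|_E(s)$. By part~(2) of Remark~\ref{rem:GL}, this slope equals $-s$, so $-s\in S(K)$. This gives $-S(K)\subseteq S(K)$. Applying the same inclusion to $-S(K)$ yields the reverse inclusion, so $S(K)=-S(K)$.

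\emph{Step 3: the contrapositive and the torus-knot case.} We have $0\in S(K)$, because a minimal-genus Seifert surface is essential (Lemma~\ref{lem:finite-dim}); note this only serves to make the set nonempty. The chirality statement for asymmetric slope sets is then just the contrapositive of Steps~1--2. For $T(p,q)$, Lemma~\ref{lem:torus-classification} gives $S=\{0,pq\}$ with $pq\neq0$. Since $-pq\notin\{0,pq\}$, the set is not symmetric, so $T(p,q)$ is chiral.

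\emph{Main obstacle.} The only delicate point is Step~1: passing from an amphichirality of the pair $(S^3,K)$ to a homeomorphism of the exterior. The issue is isotoping $h$ so that it preserves a regular neighbourhood of $K$. This is handled by the uniqueness of regular neighbourhoods, which is standard in the PL category. Everything else in the argument is formal once Remark~\ref{rem:GL} is granted.
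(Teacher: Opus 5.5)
Your proposal is correct and follows the paper's own argument: an orientation-reversing self-homeomorphism of $E$ carries essential surfaces to essential surfaces and, by Remark~\ref{rem:GL}(2), negates every boundary slope. The torus-knot case is then read off from $S=\{0,pq\}$, and your Step~1 (passing from an orientation-reversing symmetry of $(S^3,K)$ to one of $E$ via uniqueness of regular neighbourhoods) makes explicit a point the paper leaves implicit.
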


\begin{proof}
An orientation-reversing self-homeomorphism of $E$ carries essential
surfaces to essential surfaces and, by Remark~\ref{rem:GL}(2),
negates every boundary slope.
\end{proof}

\begin{definition}[Essential surface]
\label{def:essential}
A properly embedded compact connected surface $F\subset E$ is
\emph{essential} if it is two-sided, orientable, incompressible,
boundary-incompressible, not boundary-parallel, and is neither a
$2$-sphere nor a disk.
\end{definition}

\begin{remark}
\label{rem:essential-basic}
Since $E$ is irreducible, every embedded $2$-sphere bounds a ball;
since $\partial E$ is incompressible for nontrivial $K$, there is no
essential disk.  So excluding spheres and disks discards nothing, and
every essential surface satisfies $\chi(F)\le0$.  As the ambient
manifold is orientable we restrict to two-sided orientable surfaces
throughout; one-sided surfaces are not vertices of the complexes
considered here.  Finally, the boundary components of an essential
surface are disjoint essential curves on the torus $\partial E$, hence
pairwise parallel, so the boundary slope $\Sl(F)$ is well defined.
\end{remark}

\begin{definition}[Boundary pattern]
A \emph{boundary pattern} on a compact $3$-manifold $M$ is a finite
collection $\mathcal P$ of compact connected subsurfaces of $\partial
M$, called faces, such that the intersection of any two distinct faces
is a (possibly empty) union of arcs and circles, any three distinct
faces meet in a finite set, and the induced stratification of
$\partial M$ is a finite cell structure.  A \emph{patterned
homeomorphism} preserves this face structure.  For a compact surface
$Q$ with a boundary pattern we write $\Mod(Q,\mathcal P)$ for the
group of pattern-preserving homeomorphisms modulo pattern-preserving
isotopy.
\end{definition}

\section{\texorpdfstring{The complex $\ES(E)$ and its basic structure}{The complex ES(E) and its basic structure}}
\label{sec:complexes}

\begin{definition}[Essential-surface complex]
\label{def:ES}
The \emph{essential-surface complex} $\ES(E)$ is the simplicial
complex whose vertices are isotopy classes $[F]$ of connected
essential surfaces in $E$, a finite set of distinct vertices spanning
a simplex when the surfaces admit \emph{simultaneously} pairwise
disjoint representatives.
\end{definition}

\begin{proposition}[Flag property]
\label{prop:ES-flag}
For every nontrivial knot exterior $E$, the complex $\ES(E)$ is a flag
complex: if every pair in a finite collection of vertices is joined by
an edge, the whole collection spans a simplex.  Consequently $\ES(E)$
coincides with Schultens's initial surface complex $\mathcal S_0(E)$
\cite{SchultensSurfaceComplex}.
\end{proposition}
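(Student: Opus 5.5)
To prove the flag property, I will start from a finite set of vertices $[F_1],\dots,[F_k]$ that are pairwise joined by edges. The aim is to find representatives of these classes that are simultaneously pairwise disjoint. The standard tool is least-area (or normal-surface) representatives. I would fix a Riemannian metric on $E$ in which $\partial E$ is convex, or mean-convex after a small modification. By the Freedman--Hass--Scott theory and its relative versions (Meeks--Yau, Jaco--Rubinstein in the PL setting), each essential surface is then isotopic to a least-area representative. Because the surfaces are incompressible and boundary-incompressible, the boundary-relative version applies: one minimizes area among surfaces with free boundary on $\partial E$, or first fixes disjoint boundary curves of the common slope.

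The key input is the following dichotomy. If two least-area essential surfaces have disjoint representatives in their isotopy classes, then after a generic perturbation of the metric (to avoid coincident or parallel-copy degeneracies) they are either disjoint or identical. This is the Freedman--Hass--Scott exchange-and-roundoff argument. Suppose two least-area representatives $F_i, F_j$ intersect although some isotopic copies are disjoint. By incompressibility, irreducibility and boundary-incompressibility, some curve or arc of $F_i\cap F_j$ cuts off a product region. Exchanging along that region and rounding corners strictly decreases area, which contradicts minimality.

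Applying this pairwise to least-area representatives chosen simultaneously for the single metric gives surfaces $F_1,\dots,F_k$ that are pairwise disjoint. Pairwise disjointness of the chosen representatives is exactly simultaneous disjointness, so the vertices span a simplex. Edges themselves need no adjustment, since the definition of $\ES(E)$ is already phrased through simultaneous disjointness.

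For the boundary bookkeeping, note that by Lemma~\ref{lem:slope-layer} all the $F_i$ carrying boundary have a common slope. One can therefore first isotope all their boundary curves to be disjoint parallel curves on $\partial E$ and work relative to the boundary.

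The step I expect to be the main obstacle is the rigorous relative least-area statement. This concerns boundary-incompressible surfaces with free boundary on the torus, together with the handling of nonseparating classes that might have parallel disjoint copies (e.g.\ a torus isotopic to itself). If the Riemannian route is awkward in the PL category, I would try first a combinatorial route. In that route one puts the surfaces in normal position with respect to a fixed triangulation, taking each of minimal weight. One then runs the Jaco--Rubinstein PL-minimal-surface version of the same exchange argument, which yields the pairwise-disjoint-or-equal property directly.

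Finally, for the identification with Schultens's complex, I would recall that $\mathcal S_0(E)$ has the same vertex set. Its simplices are spanned by collections of pairwise disjoint-in-isotopy-class surfaces; equivalently, it is the flag complex on the same edge graph. The flag property just proved shows that both complexes have the same simplices, hence they coincide.
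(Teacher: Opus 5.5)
Your proposal is correct and is essentially the paper's argument. The paper simply cites Neumann--Swarup's simultaneous-disjointness lemma for incompressible surfaces and notes that it is compatible with Freedman--Hass--Scott least-area theory; you unpack the least-area mechanism behind such a lemma, and your identification of $\mathcal S_0(E)$ as the flag complex on the disjointness graph matches the paper's.

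Two small points need tidying. First, for surfaces with boundary, use free-boundary minimization (or double $E$ along $\partial E$) rather than fixing the boundary curves in advance, because the interleaving and boundary twisting of the relative classes is part of what has to be proved. Second, a least-area map homotopic to a two-sided embedded surface may double-cover a one-sided surface --- as can happen for $S_\pm=\partial N(N_\pm)$ in $E(4_1)$ --- and in that case the embedded representative is the frontier of a thin regular neighbourhood.
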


\begin{proof}
The exterior $E$ is irreducible and every representative of a vertex
is incompressible.  If $[F_0],\dots,[F_k]$ form a clique in the
$1$-skeleton, each pair can be isotoped to be disjoint, and
Neumann--Swarup's simultaneous-disjointness lemma for incompressible
surfaces in an irreducible $3$-manifold gives representatives of the
whole family which are simultaneously pairwise disjoint
\cite[Lemma~2.3]{NeumannSwarup}.  The same conclusion is compatible
with the least-area theory of Freedman--Hass--Scott \cite{FHS}.
Schultens's $\mathcal S_0(M)$ is by definition the flag complex on the
disjointness graph, so the two agree.
\end{proof}

\begin{lemma}[Nonemptiness and finite dimension]
\label{lem:finite-dim}
Let $K$ be a nontrivial knot and $E=E(K)$.  Then $\ES(E)\ne\varnothing$
--- indeed a minimal-genus Seifert surface is essential, so
$0\in S(K)$ --- and $\ES(E)$ is finite-dimensional: there is a
constant $h(E)$ with $\dim\ES(E)\le h(E)-1$.
\end{lemma}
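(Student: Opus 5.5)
The lemma has two parts, and I will prove them separately.

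\emph{Nonemptiness.} Take a Seifert surface $F$ for $K$ of minimal genus $g(K)\ge1$; it is connected, orientable, two-sided, and meets $\partial E$ in one preferred longitude. I will show it is essential.
\begin{itemize}
\item \emph{Incompressible.} Compressing $F$ along a disk would give a Seifert surface of smaller genus. If the compressing curve separates $F$, discard the closed component; the compressing curve is essential in $F$, so the genus still drops.
\item \emph{Not a disk.} $F$ is not a disk because $K$ is nontrivial.
\item \emph{Boundary-incompressible.} I argue as in the proof of Proposition~\ref{prop:kakimizu-subcomplex}. $E$ is irreducible with incompressible torus boundary, so an incompressible, boundary-compressible orientable surface is a boundary-parallel disk or annulus. $F$ is neither, since it has one boundary component and positive genus.
\item \emph{Not boundary-parallel.} A boundary-parallel surface with one boundary curve on a torus would be a disk.
\end{itemize}
Hence $[F]$ is a vertex of $\ES(E)$ with $\Sl(F)=0$.

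\emph{Finite dimension.} I will invoke Kneser--Haken finiteness. For a compact irreducible 3-manifold $M$ with a fixed triangulation, there is a constant $h(M)$ such that any collection of more than $h(M)$ pairwise disjoint, pairwise non-parallel incompressible surfaces, none boundary-parallel, must contain two parallel members. With normal surface theory one can take $h(M)$ linear in the number of tetrahedra: the complementary regions of a disjoint normal family are products $I$-bundles except in boundedly many tetrahedra.

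Given a $k$-simplex of $\ES(E)$, choose simultaneously disjoint representatives $F_0,\dots,F_k$. They are essential, and they are pairwise non-isotopic because distinct vertices are distinct isotopy classes. Two disjoint essential surfaces that are parallel (cobound a product region) are isotopic, so the $F_i$ are pairwise non-parallel. Haken's bound then gives $k+1\le h(E)$, that is, $\dim\ES(E)\le h(E)-1$.

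\emph{Main obstacle.} The delicate point is applying Haken's lemma correctly in the bounded case. The surfaces have boundary on $\partial E$, and the surfaces themselves may be separating or non-separating.

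First, essential surfaces can be isotoped to be normal, and disjointness can be preserved while doing so. I will do this by normalizing the whole union $F_0\cup\dots\cup F_k$ at once: it is an incompressible, boundary-incompressible surface in the irreducible, boundary-irreducible $E$.

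Second, the counting step: in the complement of a normal surface, all but boundedly many regions (bounded in terms of the number of tetrahedra) are $I$-bundles made entirely of parallel normal pieces. Any product region between two components of the union forces them to be parallel, and hence isotopic.

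One subtlety is a twisted $I$-bundle region, which would make a component double-cover a one-sided surface. This is harmless: it only bounds the count by a factor, and the constant is absorbed into $h(E)$.

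If the bounded-case form of the lemma is not available as stated, I would cite the version for Haken manifolds with boundary from Jaco--Shalen or Johannson, or Freedman--Hass--Scott least-area representatives, to obtain disjointness and the parallelism conclusion.
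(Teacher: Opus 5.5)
Your proof is correct and follows the paper's route. Essentiality of a minimal-genus Seifert surface comes from the fact that an incompressible, boundary-compressible surface in $E$ is a boundary-parallel annulus, and the dimension bound comes from Kneser--Haken finiteness applied to simultaneously disjoint, pairwise non-parallel representatives (the paper simply cites this from Hempel and Jaco--Shalen). Two small corrections to your optional details: $F$ is not boundary-parallel because $\partial F$ is an essential curve on $\partial E$, and such a curve bounds no subsurface of the torus, not because it ``would be a disk''; and twisted $I$-bundle regions add an additive term of at most $\dim H_1(E;\mathbb Z/2)=1$, since their one-sided cores are disjoint and independent in $\mathbb Z/2$-homology, rather than a multiplicative factor.
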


\begin{proof}
A minimal-genus Seifert surface $F$ of $K$, properly isotoped into
$E$, is incompressible; since $K$ is nontrivial, $F$ has genus at
least one, hence is not a boundary-parallel annulus, and an
incompressible surface with boundary on the torus $\partial E$ which
is boundary-compressible is a boundary-parallel annulus.  So $F$ is
essential, and its boundary slope is $0$.

For finite dimension, let $\{[F_0],\dots,[F_n]\}$ span a simplex with
simultaneously disjoint representatives.  These are pairwise
non-parallel, since parallel surfaces are isotopic.  By the
Kneser--Haken finiteness theorem \cite{Hempel,JS}, the number of
pairwise disjoint, pairwise non-parallel essential surfaces in the
Haken manifold $E$ is bounded by a constant $h(E)$, so $n+1\le h(E)$.
\end{proof}

\subsection{The slope-layer decomposition}
\label{subsec:layers}

The following lemma is immediate but it governs every computation in
this paper, so we isolate it.

\begin{lemma}[Slope layering]
\label{lem:slope-layer}
Let $F,F'$ be essential surfaces in $E$ with $\partial F\ne\varnothing
\ne\partial F'$ and $\Sl(F)\ne\Sl(F')$.  Then $[F]$ and $[F']$ span no
edge of $\ES(E)$.  Consequently every simplex of $\ES(E)$ contains
boundary-bearing vertices of at most one boundary slope.
\end{lemma}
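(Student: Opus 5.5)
The plan is to argue by contraposition directly from the definition of an edge in $\ES(E)$ (Definition~\ref{def:ES}). Suppose $[F]$ and $[F']$ span an edge. Then there are representatives $F$ and $F'$ which are disjoint properly embedded surfaces in $E$. In particular $\partial F$ and $\partial F'$ are disjoint unions of simple closed curves on the torus $\partial E$.

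Each component of $\partial F$ or $\partial F'$ is essential on $\partial E$, by Remark~\ref{rem:essential-basic}. The key step is the standard fact that disjoint essential simple closed curves on a torus are isotopic. To see this, fix components $c\subset\partial F$ and $c'\subset\partial F'$. Write their slopes as $p/q$ and $p'/q'$. The algebraic intersection number of $c$ and $c'$ is $\pm(pq'-p'q)$, and this is nonzero whenever the slopes differ. But disjoint curves have geometric, hence algebraic, intersection number zero. So the two slopes agree, that is, $\Sl(F)=\Sl(F')$, contradicting the hypothesis.

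One point needs checking: $\Sl$ is independent of the chosen representative within an isotopy class. An isotopy of $E$ restricts to an isotopy of $\partial E$, and isotopies of the torus preserve slopes. So $\Sl$ is an invariant of the vertex, and the argument above, carried out on the disjoint representatives, gives a statement about the vertices themselves.

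For the consequence, take a simplex of $\ES(E)$. Any two of its boundary-bearing vertices are joined by an edge, since every pair of vertices in a simplex is. By the first part, they therefore have the same slope. Hence all boundary-bearing vertices of the simplex share a single slope.

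I expect no real obstacle here. The only care needed is in the well-definedness of $\Sl$ under isotopy and in the intersection-number computation, both of which are routine.
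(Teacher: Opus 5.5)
Your proof is correct and follows the paper's argument. Disjoint representatives give disjoint boundary curves on $\partial E$, and curves of distinct slopes cannot be disjoint; your algebraic intersection count $\pm(pq'-p'q)$ is just a concrete justification of the paper's claim that the geometric intersection number is nonzero, and your check that $\Sl$ is an isotopy invariant makes explicit a point the paper leaves implicit.
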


\begin{proof}
Disjoint representatives have disjoint boundaries on the torus
$\partial E$.  Essential simple closed curves on a torus of distinct
slopes have nonzero geometric intersection number, so they cannot be
disjoint.
\end{proof}

\begin{notation}[Layers]
\label{not:layers}
For $s\in\mathbb Q\cup\{\infty\}$ let $\ES_s(E)$ be the full
subcomplex of $\ES(E)$ spanned by vertices of boundary slope $s$, and
let $\ES_{\mathrm{cl}}(E)$ be the full subcomplex spanned by closed
vertices.  We call $\ES_s(E)$ the \emph{slope-$s$ layer}.  By
Lemma~\ref{lem:slope-layer},
\[
  \ES(E)\;=\;\bigcup_{s\in S(K)}
  \bigl\langle \ES_s(E)\cup\ES_{\mathrm{cl}}(E)\bigr\rangle ,
\]
the union being over the boundary-slope set $S(K)$ and
$\langle\,\cdot\,\rangle$ denoting the full subcomplex; distinct
layers meet only inside $\ES_{\mathrm{cl}}(E)$.  See
Figure~\ref{fig:layers}.
\end{notation}

\begin{figure}[htb]
\centering
\begin{tikzpicture}[scale=1.0]
\foreach \y/\s/\c in {2.0/{s_1}/surfA, 0.9/{s_2}/surfB, -0.2/{s_3}/surfC}{
  \fill[\c,opacity=0.10] (-0.2,\y-0.35) rectangle (5.0,\y+0.35);
  \draw[\c,opacity=0.55] (-0.2,\y-0.35) rectangle (5.0,\y+0.35);
  \node[left,\c] at (-0.35,\y) {$\ES_{\s}(E)$};
}
% vertices in layers with some edges inside layers
\fill[surfA] (0.6,2.0) circle (2pt); \fill[surfA] (1.6,2.15) circle (2pt);
\fill[surfA] (2.6,1.9) circle (2pt);
\draw[surfA,thick] (0.6,2.0)--(1.6,2.15)--(2.6,1.9)--(0.6,2.0);
\fill[surfB] (0.9,0.9) circle (2pt); \fill[surfB] (2.0,0.95) circle (2pt);
\draw[surfB,thick] (0.9,0.9)--(2.0,0.95);
\fill[surfC] (1.2,-0.2) circle (2pt);
% closed part
\fill[greyish,opacity=0.10] (5.4,-0.6) rectangle (7.0,2.4);
\draw[greyish,opacity=0.6] (5.4,-0.6) rectangle (7.0,2.4);
\node[greyish,align=center,font=\small] at (6.2,2.75) {closed\\ vertices};
\fill[greyish] (6.2,1.5) circle (2pt);
\fill[greyish] (6.2,0.4) circle (2pt);
\draw[greyish,thick] (6.2,1.5)--(6.2,0.4);
% edges from closed to layers (allowed)
\draw[greyish,thick,densely dashed] (6.2,1.5) -- (2.6,1.9);
\draw[greyish,thick,densely dashed] (6.2,0.4) -- (2.0,0.95);
% forbidden edge
\draw[red!70!black,very thick,densely dotted] (2.6,1.9) -- (2.0,0.95);
\node[red!70!black,font=\small,right] at (2.85,1.45) {forbidden};
\end{tikzpicture}
\caption{The slope-layer decomposition (Lemma~\ref{lem:slope-layer}).
Edges may occur inside a single slope layer, and between a closed
vertex and anything, but never between two boundary-bearing vertices
of different slopes.  All the examples of
Section~\ref{sec:firstlook} have singleton layers and no closed
vertices, which is why they are discrete.}
\label{fig:layers}
\end{figure}

\begin{remark}[Layer preservation]
\label{rem:layer-preservation}
By Remark~\ref{rem:GL}, the natural action preserves this structure:
an orientation-preserving mapping class carries $\ES_s(E)$ to itself
for every $s$, while an orientation-reversing one carries $\ES_s(E)$
to $\ES_{-s}(E)$; both preserve $\ES_{\mathrm{cl}}(E)$.  In particular
$\Image\Phi_{\ES}$ acts on the set $S(K)$ of slopes through a group of
order at most $2$.
\end{remark}

\subsection{Unbounded dimension}
\label{subsec:dimension}

The examples of Section~\ref{sec:firstlook} are discrete, and the
cable example already has an edge.  The dimension is in fact
unbounded.  Recall that for a nontrivial knot $J$ and coprime
$(p,q)$ with $p\ge2$, the cable $C_{p,q}(J)$ is the curve on
$\partial V'$ running $p$ times longitudinally and $q$ times
meridionally, where $V'\subset\Int N(J)$ is concentric; the exterior
decomposes along the JSJ torus $\partial N(J)$ into $E(J)$ and a cable
space \cite{Budney,JS}.

\begin{proposition}[Simplices of every dimension]
\label{prop:dimension}
Fix $n\ge1$.  Let $J$ be a nontrivial knot and define an iterated
cable
\[
  K_0=J,\qquad K_i=C_{p_i,q_i}(K_{i-1})\quad(1\le i\le n),
\]
with each cabling nontrivial, that is $p_i\ge2$.  Then in
$E=E(K_n)$ the $n$ tori
\[
  T^{(i)}=\partial N(K_{i-1})\qquad(1\le i\le n),
\]
each of which is a JSJ torus of $E$, together with the outermost
cabling annulus $A\subset E$ are
simultaneously disjoint, pairwise non-isotopic essential surfaces.
Hence they span an $n$-simplex and
\[
  \dim\ES\bigl(E(K_n)\bigr)\ \ge\ n .
\]
In particular $\sup_K\dim\ES(E(K))=\infty$, while each individual
$\dim\ES(E)$ is finite by Lemma~\ref{lem:finite-dim}.
\end{proposition}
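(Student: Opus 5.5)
The plan is to build the iterated cable as a nested sequence of solid tori and read off disjointness from the nesting. Set $W_0=N(J)$ and, inductively, let $W_i=N(K_i)$ be a thin regular neighbourhood of $K_i$. Since $K_i$ lies on $\partial V_i'$ with $V_i'\subset\Int W_{i-1}$ concentric, we may take $W_i\subset\Int W_{i-1}$. The tori $T^{(i)}=\partial W_{i-1}$, for $1\le i\le n$, are then nested and pairwise disjoint, and all of them lie in $E=S^3\setminus\Int W_n$. The outermost cabling annulus $A=\partial V_n'\setminus\Int W_n$ lies in $W_{n-1}\setminus W_n$, hence in the interior of the region bounded by $T^{(n)}$, so it misses every $T^{(i)}$. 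This gives simultaneous disjointness directly, without any appeal to Proposition~\ref{prop:ES-flag}.

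Next we verify essentiality. For each $i$, the torus $T^{(i)}$ separates $E$ into two pieces. The outside piece is $E(K_{i-1})$, which has incompressible boundary because $K_{i-1}$ is a nontrivial knot; iterated cables of a nontrivial knot are nontrivial. The inside piece is $W_{i-1}\setminus\Int W_n$, a union of cable spaces glued along tori, and its boundary tori are incompressible there. By the standard JSJ description of iterated cables \cite{Budney,JS}, each $T^{(i)}$ is a JSJ torus of $E$, in particular incompressible, and it is not boundary-parallel because neither side is $T^2\times I$. A closed torus is automatically boundary-incompressible. For $A$ we reuse the torus-knot census of Section~\ref{subsec:torus-first}. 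The piece $W_{n-1}\setminus\Int W_n$ is Seifert fibered over an annulus with one cone point of order $p_n\ge2$, and $A$ is the vertical annulus over the arc separating the cone point from the outer boundary. Such an annulus is essential in the cable space, and it stays essential in $E$ because the outer boundary $T^{(n)}$ is incompressible on both sides. It is also not boundary-parallel, since each side of $A$ contains either the cone fibre or the torus $T^{(n)}$.

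It remains to show the surfaces are pairwise non-isotopic. $A$ has boundary and the tori are closed, so $A$ is distinct from all of them. For the tori, distinct $T^{(i)}$ are non-parallel because the region between $T^{(i)}$ and $T^{(j)}$ contains at least one cable space with $p\ge2$, which is not $T^2\times I$; in particular the JSJ tori are pairwise non-isotopic. So we have $n+1$ distinct vertices with disjoint representatives, forming an $n$-simplex, which gives $\dim\ES(E(K_n))\ge n$. The finiteness of each individual dimension is Lemma~\ref{lem:finite-dim}.

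The main obstacle is the essentiality and non-parallelism of the $T^{(i)}$, which needs the cable spaces to be nontrivial, i.e.\ $p_i\ge2$ and not $T^2\times I$. I will not reprove this; I will cite the JSJ structure of iterated cables \cite{Budney,JS}.
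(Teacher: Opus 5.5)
Your proposal is correct and follows the paper's proof step for step. Disjointness comes from the nesting of the solid tori. The standard JSJ structure of iterated cables gives essentiality and pairwise non-parallelism of the $T^{(i)}$, because no cable space is $T^2\times I$ when $p_i\ge2$. The annulus $A$ is distinguished from the tori because it has boundary. Your argument for $A$ is more explicit than the paper's citation of Budney: you treat it as a vertical annulus over an essential arc in the cable space and carry essentiality over to $E$ using the incompressibility of $T^{(n)}$.
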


\begin{proof}
The exteriors $E(K_i)$ are satellite exteriors and their JSJ
decomposition is the standard one: $E(K_n)$ is obtained from $E(J)$ by
successively gluing cable spaces $CS_1,\dots,CS_n$ along the tori
$T^{(1)},\dots,T^{(n)}$, where $CS_i$ lies between $T^{(i)}$ and
$T^{(i+1)}$ (and $CS_n$ lies between $T^{(n)}$ and $\partial E$)
\cite{Budney,JS,Johannson}.  These tori are essential and pairwise
non-parallel, since no $CS_i$ is a product $T^2\times I$ --- each is
Seifert fibered over an annulus with one exceptional fibre of order
$p_i\ge2$.  They are nested, hence pairwise disjoint.  The outermost
cabling annulus $A=\partial V'\setminus\Int N(K_n)$ is properly
embedded in $E$ with $\partial A\subset\partial E$ of slope $p_nq_n$;
it is the frontier of the characteristic Seifert piece of the
outermost cable space and is essential because $K_{n-1}$ is nontrivial
\cite{Budney}.  The tori are nested, with $T^{(1)}=\partial N(J)$
outermost and $T^{(n)}=\partial N(K_{n-1})$ innermost, and $A$ lies on
the torus $\partial V'$ which, apart from $\partial A\subset\partial E$,
is contained in the interior of the outermost cable space $CS_n$;
hence $A$ may be taken disjoint from every $T^{(i)}$.  Finally $A$
is not isotopic to any $T^{(i)}$, being a surface with nonempty
boundary.  Thus the $n+1$ classes are distinct vertices with
simultaneously disjoint representatives, and by
Definition~\ref{def:ES} they span an $n$-simplex.
\end{proof}

\begin{figure}[htb]
\centering
\begin{tikzpicture}[scale=0.95]
% nested tori cross section for n=2
\draw[surfC,very thick] (0,0) ellipse (3.0 and 1.75);
\node[surfC,above] at (0,1.78) {\small $T^{(1)}$};
\draw[surfA,very thick] (0,0) ellipse (2.15 and 1.28);
\node[surfA,above] at (0,1.31) {\small $T^{(2)}$};
\draw[surfB,very thick] (0,0) ellipse (1.35 and 0.82);
\node[surfB,above] at (0,0.85) {\small $A$};
\fill[white] (1.35,0) circle (0.19); \draw[thick] (1.35,0) circle (0.19);
\fill[white] (-1.35,0) circle (0.19); \draw[thick] (-1.35,0) circle (0.19);
\node[font=\small] at (0,-2.25) {nested surfaces in $E(K_2)$};
% the 2-simplex
\begin{scope}[xshift=6.6cm,yshift=-0.2cm]
  \coordinate (a) at (0,1.0);
  \coordinate (b) at (-0.95,-0.6);
  \coordinate (c) at (0.95,-0.6);
  \fill[greyish,opacity=0.18] (a)--(b)--(c)--cycle;
  \draw[thick] (a)--(b)--(c)--cycle;
  \fill[surfB] (a) circle (2.4pt); \node[above,surfB] at (a) {\small $[A]$};
  \fill[surfC] (b) circle (2.4pt); \node[below left,surfC] at (b) {\small $[T^{(1)}]$};
  \fill[surfA] (c) circle (2.4pt); \node[below right,surfA] at (c) {\small $[T^{(2)}]$};
  \node[font=\small] at (0,-2.05) {a $2$-simplex in $\ES(E(K_2))$};
\end{scope}
\end{tikzpicture}
\caption{A doubly iterated cable.  The two JSJ tori and the outermost
cabling annulus are nested, hence simultaneously disjoint, and span a
$2$-simplex.  Iterating $n$ times gives an $n$-simplex
(Proposition~\ref{prop:dimension}).}
\label{fig:iterated}
\end{figure}

\begin{remark}
\label{rem:slope-sep-rare}
Proposition~\ref{prop:dimension} also shows that slope-separatedness
(Definition~\ref{def:slope-separated-intro}) is a genuine restriction:
a satellite exterior contains closed essential surfaces, so no
satellite knot is slope-separated.  Nor is the condition automatic for
hyperbolic knots.  By Hatcher--Thurston, a two-bridge knot whose
minimal continued-fraction expansion has at least two odd coefficients
carries connected $n$-sheeted essential surfaces for every $n$
\cite[Proposition~1]{HatcherThurston}, so its slope layers are
typically infinite.  The three-vertex behaviour of $4_1$ is a
low-complexity accident.  What survives in general is
Lemma~\ref{lem:slope-layer} and
Remark~\ref{rem:layer-preservation}: the layer structure and its
preservation by the action.
\end{remark}

\section{Three levels of structure}
\label{sec:decorations}

Section~\ref{sec:firstlook} already showed that the bare complex can
have symmetries which no homeomorphism realizes, while a very small
amount of peripheral information may remove them.  To keep the
bookkeeping light, this paper uses only three named objects
(Figure~\ref{fig:ladder}):
\[
  \ES(E),\qquad \ES^{\mathrm{per}}(E),\qquad \CES(E).
\]
The first remembers only disjointness, the second adds the two pieces
of peripheral information that recur in the examples, and the third
adds characteristic-submanifold/JSJ data.  Type-only and slope-only
views are occasionally useful as diagnostics, but we do not promote
them to additional levels of the theory.

\begin{definition}[Peripheral refinement]
\label{def:peripheral}
The \emph{peripheral essential-surface complex}
$\ES^{\mathrm{per}}(E)$ is $\ES(E)$ together with two vertex data:
\begin{itemize}[leftmargin=*]
\item the homeomorphism type of the representing surface $F$, including
its genus and number of boundary components;
\item the boundary slope $\Sl(F)$ when $\partial F\ne\varnothing$.
\end{itemize}
The meridian $\mu$ belongs to the ambient peripheral coordinate system,
not to the vertex data.  It is therefore suppressed from the notation
$\ES^{\mathrm{per}}(E)$.
\end{definition}

\begin{lemma}[The optional homology refinement is binary]
\label{lem:homology-binary}
Let $F\subset E(K)$ be a connected two-sided orientable properly
embedded surface in the exterior of a nontrivial knot.  Then
\[
  [F]=0\in H_2(E,\partial E;\mathbb Z)
  \quad\Longleftrightarrow\quad
  F\text{ separates }E.
\]
If $F$ is nonseparating, then $[F]$ is primitive; since
$H_2(E,\partial E;\mathbb Z)\cong\mathbb Z$, its unoriented class is
the unique nonzero primitive class.
\end{lemma}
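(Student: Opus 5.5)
The argument rests on the homology long exact sequence of the pair $(E,\partial E)$ together with Poincaré--Lefschetz duality. Let $\iota\colon E\to S^3\setminus K$ denote the inclusion (a homotopy equivalence onto the knot complement). Duality gives
\[
  H_2(E,\partial E;\mathbb Z)\cong H^1(E;\mathbb Z)\cong \operatorname{Hom}(H_1(E),\mathbb Z)\cong\mathbb Z .
\]
Under this isomorphism the class $[F]$, for a fixed orientation of $F$, corresponds to the cohomology class $\phi_F$ that counts algebraic intersection of loops with $F$. So the plan is to translate both conditions in the statement into statements about $\phi_F$.

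\emph{Separating implies null-homologous.} Suppose $F$ separates $E$. Then every closed loop crosses $F$ an even number of times, with signs alternating, so $\phi_F=0$. Equivalently, $F$ is the frontier of one complementary piece $W$, and $[F]=\pm\partial[W]$ up to a correction from $\partial E$, which vanishes in the relative group. Either way $[F]=0$.

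\emph{Nonseparating implies nonzero and primitive.} Suppose $F$ does not separate $E$. Because $F$ is connected and two-sided, there is a simple closed curve $\gamma$ meeting $F$ transversely in exactly one point: cross $F$ once and return through $E\setminus F$, which is connected. Then $\phi_F(\gamma)=\pm1$, so $\phi_F\neq 0$. Moreover, a homomorphism $H_1(E)\cong\mathbb Z\to\mathbb Z$ that attains the value $\pm1$ is surjective, hence primitive (a generator). Transporting back through duality, $[F]$ is a generator of $H_2(E,\partial E)\cong\mathbb Z$. Generators of $\mathbb Z$ are exactly $\pm1$, so the unoriented class of $F$ is the unique nonzero primitive class.

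Combining the two directions gives the stated equivalence, and the primitivity claim falls out of the second direction.

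The main obstacle is keeping the duality step honest. One must check that the intersection pairing
\[
  H_2(E,\partial E)\times H_1(E)\to\mathbb Z
\]
is perfect, so that $\phi_F=0$ really forces $[F]=0$. Since $H_1(E)\cong\mathbb Z$ is torsion-free, the universal coefficient theorem gives $H^1(E;\mathbb Z)\cong\operatorname{Hom}(H_1(E),\mathbb Z)$ with no $\operatorname{Ext}$ term, so this holds. I would cite Lefschetz duality for compact orientable $3$-manifolds with boundary rather than reprove it.
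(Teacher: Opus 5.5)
Your proof is correct and follows essentially the paper's route. In the separating case a complementary piece bounds $F$ modulo $\partial E$. In the nonseparating case a loop meeting $F$ once pairs with $[F]$ to $\pm1$ under Poincar\'e--Lefschetz duality, which gives nonvanishing and primitivity at once.

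One small correction to your last paragraph: the Ext term in $H^1(E;\mathbb Z)$ is $\operatorname{Ext}(H_0(E),\mathbb Z)$. It vanishes because $H_0(E)$ is free, not because $H_1(E)$ is torsion-free.
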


\begin{proof}
If $F$ separates, the closure of either complementary component gives
a relative $3$-chain whose boundary is $F$ together with a subsurface
of $\partial E$, so $[F]=0$.  Conversely, if $F$ is nonseparating,
choose an arc crossing $F$ once from one side to the other and close it
up through $E\setminus F$.  The resulting loop has algebraic
intersection $\pm1$ with $F$.  By Poincar\'e--Lefschetz duality this
pairs $[F]$ to $\pm1$, so $[F]$ is nonzero and primitive.
\end{proof}

\begin{remark}[Why homology is not a standard decoration here]
\label{rem:homology-redundancy}
Lemma~\ref{lem:homology-binary} shows that, for connected surfaces in a
knot exterior, the unoriented relative homology class $\pm[F]$ carries
only the binary information ``separating or nonseparating.''  It is a
natural optional refinement, especially in broader classes of
$3$-manifolds, but including it in the standard notation would add more
bookkeeping than information in the present setting.  We therefore do
not build it into $\ES^{\mathrm{per}}$.

There is also a useful peripheral consequence.  If $F$ is
nonseparating, then $[F]$ is a generator and the boundary map
$H_2(E,\partial E)\to H_1(\partial E)$ sends $[F]$ to
$\pm\lambda$.  Since all components of $\partial F$ have one common
slope, a nonseparating essential surface must therefore have longitude
slope $0$.  Hence every essential surface of nonzero boundary slope is
separating.
\end{remark}

\begin{remark}[Two diagnostic partial refinements]
\label{rem:partial-refinements}
In a concrete example one may temporarily keep only the surface type,
or only the boundary slope, to see which datum eliminates a false
symmetry.  We use this comparison for torus knots and the figure-eight
knot.  These are deliberately treated as \emph{partial views} of
$\ES^{\mathrm{per}}$, not as additional named objects in the main
framework.
\end{remark}

The characteristic refinement $\CES(E)$ is defined in
Section~\ref{sec:satellite}; it adds the characteristic frontier,
support, boundary-pattern, and rooted JSJ incidence data.  At the
opposite extreme, Appendix~\ref{app:atlas} defines an intentionally
over-marked hierarchy atlas as a reconstruction benchmark only.  The
latter is kept outside the main three-level framework.

\begin{figure}[htb]
\centering
\begin{tikzpicture}[
  every node/.style={font=\small},
  box/.style={draw=greyish,rounded corners=2pt,inner sep=5pt,align=center,
             minimum width=42mm},
  ar/.style={-{Stealth[length=4pt]},greyish,thick}]
\node[box] (CES) at (0,2.25) {$\CES(E)$\\ characteristic / JSJ data};
\node[box] (ESp) at (0,0.6) {$\ES^{\mathrm{per}}(E)$\\ surface type + boundary slope};
\node[box,fill=lightfill] (ES) at (0,-1.05) {$\ES(E)=\mathcal S_0(E)$\\ disjointness only};
\draw[ar] (CES) -- node[left=4mm,font=\scriptsize,fill=white,inner sep=1pt]{forget characteristic data} (ESp);
\draw[ar] (ESp) -- node[left=4mm,font=\scriptsize,fill=white,inner sep=1pt]{forget peripheral data} (ES);
\node[greyish,align=left] at (5.0,1.9)
  {more information\\ reconstruction easier};
\node[greyish,align=left] at (5.0,-0.9)
  {more economical\\ rigidity stronger};
\end{tikzpicture}
\caption{The three named levels used in the paper.  The central
question is how far down one can move while still controlling image,
kernel, and reconstruction.  Type-only and slope-only comparisons are
used locally as diagnostics, and the over-marked hierarchy atlas of
Appendix~\ref{app:atlas} is a separate benchmark rather than a fourth
level.}
\label{fig:ladder}
\end{figure}

\begin{definition}[Automorphisms at the refined levels]
\label{def:automorphisms}
An automorphism of $\ES^{\mathrm{per}}(E)$ is a simplicial
automorphism $\varphi$ of $\ES(E)$ which preserves surface type and
for which one global sign $\varepsilon\in\{\pm1\}$ satisfies
\[
  \Sl(\varphi[F])=\varepsilon\,\Sl([F])
\]
for every boundary-bearing vertex.  Once $\CES(E)$ is defined in
Section~\ref{sec:satellite}, its automorphisms are those peripheral
automorphisms which also preserve the characteristic frontier,
support, boundary-pattern, and JSJ-incidence data listed there.  We
write $\Aut(X)$ for the automorphism group at any of the three levels.
\end{definition}

There are evident forgetful maps
\[
  \CES(E)\longrightarrow\ES^{\mathrm{per}}(E)
  \longrightarrow\ES(E),
\]
all identity on the underlying vertex set.  Type-only and slope-only
views will occasionally be used as diagnostics, but not as additional
objects in the main notation.

\begin{lemma}[The natural action]
\label{lem:natural-action}
At each of the three levels, every self-homeomorphism of $E$ induces
an automorphism.  Thus, for $X\in\{\ES(E),\ES^{\mathrm{per}}(E),\CES(E)\}$,
there is a homomorphism
\[
  \Phi_X:\Mod^{\pm}(E)\longrightarrow\Aut(X).
\]
We write $\mathcal K_X(E)=\ker\Phi_X$.
\end{lemma}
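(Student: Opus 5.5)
The natural action is defined level by level: first on the vertex set of $\ES(E)$, then we check it preserves simplices, then the extra data. The two substantive points are that self-homeomorphisms preserve the vertex set and disjointness, and that isotopic homeomorphisms induce the same map.

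Let $h:E\to E$ be a homeomorphism and $F$ a connected essential surface.

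\emph{Vertices.} The image $h(F)$ is properly embedded, compact and connected. Each defining property in Definition~\ref{def:essential} is phrased intrinsically in terms of the pair $(E,F)$, so each is invariant under homeomorphisms of pairs. These properties are: two-sidedness, orientability, incompressibility, boundary-incompressibility, not being boundary-parallel, and not being a sphere or disk. Hence $h(F)$ is essential. If $F_t$ is an isotopy, then $h(F_t)$ is an isotopy, so $[F]\mapsto[h(F)]$ is well defined on isotopy classes. It is a bijection, with inverse induced by $h^{-1}$.

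\emph{Simplices.} If $F_0,\dots,F_k$ are simultaneously disjoint representatives, then $h(F_0),\dots,h(F_k)$ are too. The same holds for $h^{-1}$, so we obtain a simplicial automorphism $h_*$ of $\ES(E)$.

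\emph{Dependence only on the mapping class.} Suppose $h$ and $h'$ are isotopic through homeomorphisms $h_t$. Then $h_t(F)$ is an isotopy of properly embedded surfaces from $h(F)$ to $h'(F)$, so $[h(F)]=[h'(F)]$. Clearly $(h\circ g)_*=h_*\circ g_*$. Together these give a homomorphism $\Phi_{\ES}$ on $\pi_0\Homeo(E)=\Mod^{\pm}(E)$.

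\emph{The peripheral level.} The map $h|_F$ is a homeomorphism $F\to h(F)$, so genus and the number of boundary components are preserved. For slopes, Remark~\ref{rem:GL} says $h|_{\partial E}$ either fixes every unoriented slope or sends $s\mapsto -s$. Which case occurs depends only on whether $h$ preserves or reverses orientation, not on $F$. So a single global sign $\varepsilon$ works for all boundary-bearing vertices, with $\varepsilon=+1$ exactly when $h$ preserves orientation. This is precisely the condition in Definition~\ref{def:automorphisms}. The sign is also constant on isotopy classes, since orientation behaviour is. Hence $\Phi_{\ES^{\mathrm{per}}}$ is defined and is a lift of $\Phi_{\ES}$ through the forgetful map.

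\emph{The characteristic level.} The characteristic submanifold $\Char(E)$ is unique up to isotopy (Jaco--Shalen--Johannson), so $h(\Char(E))$ is isotopic to $\Char(E)$. After such an isotopy, $h$ carries the frontier, supports, boundary patterns and JSJ incidence data to themselves. Isotopic choices give the same induced data on isotopy classes.

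The main obstacle is this last step, since it depends on the precise definition of $\CES(E)$ given later. The part I would check first is that the characteristic data attached to a vertex are isotopy-invariant. This should follow from the uniqueness of $\Char(E)$ up to isotopy together with the enclosing property of the characteristic submanifold. Everything else is formal naturality.
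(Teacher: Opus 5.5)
Your proposal is correct and follows the same route as the paper. It uses three ingredients: homeomorphisms preserve essentiality, isotopy, simultaneous disjointness and surface type; Remark~\ref{rem:GL} supplies the single global slope sign; and uniqueness of $\Char(E)$ up to isotopy handles the characteristic data. You also check that the map is well defined on mapping classes and respects composition, which the paper leaves implicit, and your characteristic-level step is as detailed as the paper's own one-line argument.
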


\begin{proof}
A homeomorphism preserves essentiality, isotopy, simultaneous
disjointness, and surface homeomorphism type.  By
Remark~\ref{rem:GL}, it either fixes every unoriented boundary slope or
negates all of them.  The characteristic submanifold is unique up to
isotopy \cite{JS,Johannson}, so the characteristic data are preserved
as well.
\end{proof}

\begin{figure}[htb]
\centering
\begin{tikzcd}[column sep=1.5cm,row sep=0.6cm]
1 \arrow[r] & \mathcal K_X(E) \arrow[r] & \Mod^{\pm}(E)
  \arrow[r,"\Phi_X"] & \Aut\bigl(X\bigr) \\
& \text{\small\emph{invisible}} & \text{\small\emph{all symmetries}} &
  \text{\small\emph{combinatorial}}
\end{tikzcd}
\caption{The bookkeeping of the whole paper.  ``Kernel'' asks what is
lost, ``image'' asks what is gained, and the reconstruction question
asks whether the right-hand object determines $E$ at all.  The
sequence is exact at $\mathcal K_X(E)$ and at $\Mod^{\pm}(E)$ by
construction; exactness on the right --- surjectivity of $\Phi_X$ ---
is the hard Ivanov-type statement, and it fails already for
$X=\ES(E)$ by Theorem~\ref{thmA}.}
\label{fig:bookkeeping}
\end{figure}

\section{Visibility for slope-separated knots}
\label{sec:visibility}

The argument in this section uses only two facts: disjoint
boundary-bearing surfaces lie in one slope layer
(Lemma~\ref{lem:slope-layer}), and a self-homeomorphism of a knot
exterior either fixes every unoriented slope or sends every slope to
its negative (Remark~\ref{rem:GL}).

Let $K$ be slope-separated, $E=E(K)$, and write
\[
  \ES(E)=\{v_s:s\in S(K)\},\qquad \Sl(v_s)=s.
\]
For each $s$ let $\tau(s)$ be the homeomorphism type of the unique
surface represented by $v_s$.

\begin{proposition}[Combinatorial automorphisms]
\label{prop:aut-slope-separated}
If $b=|S(K)|$, then $\Aut(\ES(E))\cong S_b$.  Moreover
$\Aut(\ES^{\mathrm{per}}(E))$ is generated by the sign involution
$v_s\mapsto v_{-s}$ precisely when
\[
  S(K)=-S(K)\ne\{0\}
  \quad\text{and}\quad
  \tau(-s)=\tau(s)\ \text{for every }s\in S(K);
\]
otherwise $\Aut(\ES^{\mathrm{per}}(E))$ is trivial.
\end{proposition}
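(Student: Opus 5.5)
The plan is to read both claims directly off the description of $\ES(E)$ under slope-separation. By Definition~\ref{def:slope-separated-intro}, condition (i) removes all closed vertices and condition (iii) gives exactly one vertex per slope. By Lemma~\ref{lem:slope-layer}, no two boundary-bearing vertices of distinct slopes span an edge. Hence $\ES(E)$ is the $0$-dimensional complex on the $b$ points $v_s$, $s\in S(K)$, with no edges. We will note that $b$ is finite (this is implicit in the identification with $S_b$; for slope-separated knots it follows from Hatcher's finiteness of boundary slopes, or can simply be taken as part of the statement). A simplicial automorphism of a complex with no edges is just a bijection of its vertex set, and every bijection is simplicial. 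So $\Aut(\ES(E))\cong S_b$.

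For the peripheral level we apply Definition~\ref{def:automorphisms}: an automorphism $\varphi$ is a permutation of $\{v_s\}$ that preserves surface type and satisfies $\Sl(\varphi v_s)=\varepsilon s$ for one global sign $\varepsilon$. Since each slope carries a unique vertex, the slope condition pins $\varphi$ down completely: $\varphi(v_s)=v_{\varepsilon s}$. There are therefore at most two candidates.

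\emph{The case $\varepsilon=+1$.} This forces $\varphi=\mathrm{id}$.

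\emph{The case $\varepsilon=-1$.} The map $v_s\mapsto v_{-s}$ is a well-defined permutation exactly when $S(K)=-S(K)$, and it preserves type exactly when $\tau(-s)=\tau(s)$ for all $s$. It is a nontrivial permutation exactly when some $s\neq -s$. Since $\infty\notin S(K)$ by condition (ii), the only slope fixed by negation is $0$. So nontriviality is equivalent to $S(K)\neq\{0\}$, and we can use $0\in S(K)$ from Lemma~\ref{lem:finite-dim}. When all three conditions hold, $\Aut(\ES^{\mathrm{per}}(E))$ is the group of order two generated by the sign involution. When some condition fails, the only candidate is the identity, so the group is trivial.

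Two points need care in the write-up. The first is the role of condition (ii): a meridional slope $\infty=-\infty$ would be a second fixed point, and excluding it is what makes $S(K)=\{0\}$ the exact degenerate case. The second is the convention when $S(K)=\{0\}$. There the sign involution coincides with the identity, so we must check that the statement's ``otherwise trivial'' wording is consistent with that case rather than contradicting the generation clause. Beyond these, the argument is pure bookkeeping once the discreteness of $\ES(E)$ is established.
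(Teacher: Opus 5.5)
Your proposal is correct and follows the same route as the paper. Slope-separation together with Lemma~\ref{lem:slope-layer} makes $\ES(E)$ a discrete set of $b$ points, which gives $S_b$. At the peripheral level the single global sign forces $\varphi(v_s)=v_{\varepsilon s}$, and $\varepsilon=-1$ is admissible and nontrivial exactly under the displayed conditions. Your remarks on the finiteness of $b$, on condition (ii) making $0$ the only fixed slope, and on the degenerate case $S(K)=\{0\}$ make explicit what the paper's two-line proof leaves implicit.
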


\begin{proof}
The bare complex is a discrete set of $b$ vertices.  At the peripheral
level, the slope datum forces a single global sign; the negative sign
is allowed exactly under the displayed conditions.
\end{proof}

\begin{theorem}[$=$ Theorem~\ref{thmA}]
\label{thm:visibility}
Let $K$ be a nontrivial slope-separated knot.  For
$X=\ES(E)$ or $\ES^{\mathrm{per}}(E)$, and also for $X=\CES(E)$ when
$\Fr\Char(E)=\varnothing$, every orientation-preserving mapping class
acts trivially:
\[
  \Mod^{+}(E)\subseteq\mathcal K_X(E).
\]
Furthermore
\[
  \Image\Phi_X\cong
  \begin{cases}
    \mathbb Z/2, & \text{if $K$ is amphichiral and $S(K)\ne\{0\}$},\\[2pt]
    1, & \text{otherwise.}
  \end{cases}
\]
The nontrivial element is $v_s\mapsto v_{-s}$.  For the bare complex,
if $b=|S(K)|$, then
\[
  \Aut(\ES(E))\cong S_b,
  \qquad [\Aut(\ES(E)):\Image\Phi_{\ES}]\ge b!/2.
\]
If $K$ is amphichiral then $S(K)=-S(K)$ and $b$ is odd; consequently
an even number of boundary slopes forces $K$ to be chiral and the bare
action to be trivial.
\end{theorem}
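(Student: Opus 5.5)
The argument runs through the two facts isolated at the start of the section: slope layering (Lemma~\ref{lem:slope-layer}) and the sign rule (Remark~\ref{rem:GL}).

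\emph{Kernel.} Because $K$ is slope-separated, there are no closed vertices and each slope layer $\ES_s(E)$ is the single vertex $v_s$. Take $h$ representing a class in $\Mod^{+}(E)$. By Remark~\ref{rem:GL}(1), $h$ fixes every unoriented slope. The surface $h(F_s)$ is essential with slope $s$, so by uniqueness it is isotopic to $F_s$, and $h$ fixes $v_s$. Hence $h$ acts trivially on $\ES(E)$. The same holds for $\ES^{\mathrm{per}}(E)$ and for $\CES(E)$, because automorphisms at those levels are determined by their underlying vertex permutation (Definition~\ref{def:automorphisms}). For $\CES$ we also use the hypothesis $\Fr\Char(E)=\varnothing$: it leaves no extra characteristic data that could be moved, since the forgetful maps are the identity on vertices. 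This gives $\Mod^{+}(E)\subseteq\mathcal K_X(E)$.

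\emph{Image.} Since $\Mod^{+}(E)$ has index at most $2$ in $\Mod^{\pm}(E)$, the image has order at most $2$, and it is determined by orientation-reversing classes. If $K$ is chiral, there are none and the image is trivial. If $K$ is amphichiral, take an orientation-reversing $r$. By Remark~\ref{rem:GL}(2), $r$ sends $v_s$ to the unique vertex of slope $-s$, so $r$ induces $v_s\mapsto v_{-s}$. This map is nontrivial exactly when some $s\ne0$ lies in $S(K)$, i.e.\ when $S(K)\neq\{0\}$; here condition (ii) ensures $\infty\notin S(K)$, so only $0$ is fixed. Corollary~\ref{cor:chirality} gives $S(K)=-S(K)$, so the involution is defined on $S(K)$.

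\emph{Bare automorphisms and parity.} The bare complex is $b$ isolated points, so $\Aut(\ES(E))\cong S_b$ by Proposition~\ref{prop:aut-slope-separated}. Combined with $|\Image\Phi_{\ES}|\le2$, this gives index at least $b!/2$. For parity, amphichirality gives $S(K)=-S(K)$. Then $s\mapsto-s$ is an involution on $S(K)$ whose only possible fixed point in $\mathbb Q$ is $0$, and $0\in S(K)$ by Lemma~\ref{lem:finite-dim}. The nonzero slopes therefore pair off, so $b$ is odd. By contraposition, if $b$ is even then $K$ is chiral, and the image is trivial by the case above.

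The step needing the most care is the $\CES$ clause. One must check that, when $\Fr\Char(E)=\varnothing$, the characteristic decorations of Section~\ref{sec:satellite} reduce to data that is invariant under every homeomorphism, so that no additional automorphism constraint or action arises. Everything else is bookkeeping with the sign rule.
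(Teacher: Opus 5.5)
Your proposal is correct and follows the paper's proof essentially step for step: the sign rule plus one vertex per slope gives $\Mod^{+}(E)\subseteq\mathcal K_X(E)$ and an image of order at most two, generated by $v_s\mapsto v_{-s}$; Proposition~\ref{prop:aut-slope-separated} gives $\Aut(\ES(E))\cong S_b$; and pairing $s\leftrightarrow -s$ with $0\in S(K)$ gives the parity statement. The $\CES$ clause needs less care than you suggest: as you observe, automorphisms at every level are determined by their vertex permutation and $\Phi_{\CES}$ lifts $\Phi_{\ES}$, so its kernel and image coincide formally with those of the bare action.
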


\begin{proof}
An orientation-preserving homeomorphism fixes every slope, hence fixes
every vertex because the slopes separate vertices.  An
orientation-reversing homeomorphism sends $v_s$ to $v_{-s}$; this is
nontrivial exactly when a nonzero slope occurs.  The image therefore
has order at most two and is nontrivial precisely in the amphichiral
case above.  The statement about $\Aut(\ES(E))$ follows from
Proposition~\ref{prop:aut-slope-separated}.  If
$\Fr\Char(E)=\varnothing$, then the additional characteristic data
in $\CES(E)$ are trivial: there are no frontier components, every
vertex has support in the single ambient piece, and the rooted JSJ
incidence data have one vertex.  Hence the same vertex calculation
gives the stated conclusion for $X=\CES(E)$ as well.

For the parity statement, the involution $s\mapsto-s$ fixes exactly
$0$ and the meridian slope $\infty$.  The latter is excluded by
slope-separatedness, so on the finite set $S(K)$ only $0$ is fixed and
all remaining slopes occur in pairs $\{s,-s\}$.
\end{proof}

\begin{remark}
The theorem does not require the mapping class group to be known in
advance.  It also shows that invisible elements need not be twists:
for a hyperbolic slope-separated knot there are no essential annuli or
tori, yet the entire orientation-preserving symmetry group is
invisible.  Thus finite invisible symmetries are a genuine phenomenon,
not an artefact of missing twist coordinates.
\end{remark}

\begin{corollary}[Chirality of torus knots, from surfaces]
\label{cor:torus-chiral}
A torus knot $T(p,q)$ is chiral; its action on $\ES(E)$ is trivial,
while $\Aut(\ES(E))\cong\mathbb Z/2$.
\end{corollary}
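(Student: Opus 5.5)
The plan is to derive everything from the torus-knot census together with the visibility theorem, Theorem~\ref{thm:visibility}. The first step is to verify that $T(p,q)$ is slope-separated in the sense of Definition~\ref{def:slope-separated-intro}. By the census of Section~\ref{subsec:torus-first} (Lemma~\ref{lem:torus-classification}), every essential surface in the Seifert fibered exterior $E$ is vertical or horizontal. There are no closed essential surfaces: a vertical torus lies over a loop in $D^2(p,q)$, and each such loop gives a compressible or boundary-parallel torus. A horizontal closed surface cannot exist because $\partial E\neq\varnothing$ forces horizontal surfaces to meet the boundary. The only two vertices are $[F_0]$, of slope $0$, and $[A]$, of slope $pq$; since $pq\neq0,\infty$, no meridional surface occurs and distinct vertices have distinct slopes. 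Hence $S(K)=\{0,pq\}$ and $b=2$.

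Chirality then comes in two independent ways, and I would record both. The first is Corollary~\ref{cor:chirality}: the set $\{0,pq\}$ is not symmetric under $s\mapsto -s$, because $pq\neq0$ and $-pq\notin\{0,pq\}$. The second is the parity clause of Theorem~\ref{thm:visibility}: $b=2$ is even, so $K$ is chiral. Either one suffices, and the second argument is the one that is "from surfaces" in the intended sense.

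The action statement then follows. Since $K$ is chiral, $\Image\Phi_{\ES}$ is trivial by Theorem~\ref{thm:visibility}. More directly, any homeomorphism of $E$ is orientation-preserving, since an orientation-reversing one would make $K$ amphichiral (the exterior determines the knot by Gordon--Luecke). It therefore fixes both slopes, and so fixes both vertices. Finally, $\Aut(\ES(E))\cong S_2\cong\mathbb Z/2$ by Proposition~\ref{prop:aut-slope-separated}, because the bare complex consists of two isolated points.

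The only delicate step is the passage from a chiral knot to the absence of orientation-reversing self-homeomorphisms of $E$. To handle it, I would note that an orientation-reversing homeomorphism of $E$ preserves the meridian by Remark~\ref{rem:GL}, so it extends over $N(K)$ to an orientation-reversing homeomorphism of $S^3$ carrying $K$ to itself, which is exactly amphichirality. I would state this extension explicitly rather than leave it implicit. Everything else reduces to the census already established.
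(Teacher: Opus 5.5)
Your proposal is correct and follows the paper's proof, which simply reads off $S(T(p,q))=\{0,pq\}$ from Lemma~\ref{lem:torus-classification} and applies Theorem~\ref{thm:visibility}. Your explicit extension of an orientation-reversing homeomorphism of $E$ over $N(K)$, using meridian preservation from Remark~\ref{rem:GL}, is a step the paper leaves implicit in the proof of Theorem~\ref{thm:visibility}, and it is a welcome addition.
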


\begin{proof}
By Lemma~\ref{lem:torus-classification}, $S(T(p,q))=\{0,pq\}$ has two
elements, so Theorem~\ref{thm:visibility} applies.
\end{proof}

\begin{figure}[htb]
\centering
\begin{tikzpicture}[scale=1.0,every node/.style={font=\small}]
\foreach \x/\lab/\col in {0/{bare}/greyish, 3.2/{type only}/surfA,
                          6.4/{slope only}/surfB}{
  \node[\col] at (\x+1.1,2.15) {\lab};
}
\foreach \x/\c in {0/greyish, 3.2/surfA, 6.4/surfB}{
  \fill[\c] (\x+0.25,1.35) circle (2.2pt);
  \fill[\c] (\x+1.1,1.35) circle (2.2pt);
  \fill[\c] (\x+1.95,1.35) circle (2.2pt);
}
\node[greyish,align=center] at (1.1,0.55) {no labels\\ $\Aut\cong S_3$};
\node[surfA,align=center] at (4.3,0.55) {surface types\\ $\Aut\cong\mathbb Z/2$};
\node[surfB,align=center] at (7.5,0.55) {slopes $0,+4,-4$\\ $\Aut\cong\mathbb Z/2$};
\node[surfA] at (3.45,1.72) {\tiny $(1,1)$};
\node[surfA] at (4.3,1.72) {\tiny $(1,2)$};
\node[surfA] at (5.15,1.72) {\tiny $(1,2)$};
\node[surfB] at (6.65,1.72) {\tiny $0$};
\node[surfB] at (7.5,1.72) {\tiny $+4$};
\node[surfB] at (8.35,1.72) {\tiny $-4$};
\draw[greyish] (-0.2,-0.35) -- (9.0,-0.35);
\node[align=center] at (4.4,-0.85)
  {for the figure-eight knot, either elementary datum already\\
   removes every false symmetry};
\end{tikzpicture}
\caption{Two diagnostic partial views for the figure-eight knot.  They
are not additional levels of the theory; they simply show which part
of the peripheral information removes the false permutations.}
\label{fig:decorations-fig8}
\end{figure}

\begin{corollary}[Peripheral surjectivity criterion]
\label{cor:surjectivity}
For a slope-separated knot, the action on $\ES^{\mathrm{per}}(E)$
fails to be surjective exactly when $K$ is chiral but
\[
  S(K)=-S(K)\ne\{0\},\qquad \tau(-s)=\tau(s)\ \text{for all }s.
\]
\end{corollary}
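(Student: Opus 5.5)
The plan is to compare two groups: $\Aut(\ES^{\mathrm{per}}(E))$, described by Proposition~\ref{prop:aut-slope-separated}, and $\Image\Phi_{\ES^{\mathrm{per}}}$, described by Theorem~\ref{thm:visibility} with $X=\ES^{\mathrm{per}}(E)$. Lemma~\ref{lem:natural-action} gives the inclusion $\Image\Phi\subseteq\Aut(\ES^{\mathrm{per}}(E))$, so surjectivity fails exactly when this inclusion is proper. Each group is either trivial or of order $2$ generated by the sign involution $\sigma\colon v_s\mapsto v_{-s}$. Hence surjectivity fails precisely when $\sigma\in\Aut(\ES^{\mathrm{per}}(E))$ but $\sigma\notin\Image\Phi$.

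The steps, in order, are as follows.

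First, by Proposition~\ref{prop:aut-slope-separated}, $\Aut(\ES^{\mathrm{per}}(E))$ is nontrivial exactly when $S(K)=-S(K)\ne\{0\}$ and $\tau(-s)=\tau(s)$ for all $s$. When these conditions fail, the automorphism group is trivial and the action is trivially surjective.

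Second, assume these conditions hold. By Theorem~\ref{thm:visibility}, the image is $\mathbb Z/2$ if $K$ is amphichiral, since $S(K)\ne\{0\}$ is already assumed, and trivial otherwise. I would note explicitly that the nontrivial image element is $\sigma$ itself, as Theorem~\ref{thm:visibility} asserts. So in the amphichiral case the image is all of $\Aut(\ES^{\mathrm{per}}(E))$, and in the chiral case the image is trivial and surjectivity fails.

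Third, I check the converse: if $K$ is chiral but the displayed conditions hold, then $\Aut(\ES^{\mathrm{per}}(E))\cong\mathbb Z/2$ while the image is trivial, so the action is not surjective. Together with the first two steps, this gives the ``exactly when''.

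The only point needing care is the amphichiral case. There I must confirm that an orientation-reversing homeomorphism realizes precisely $\sigma$, and not some other element. This holds because $\Aut(\ES^{\mathrm{per}}(E))$ has at most one nontrivial element, and a realized orientation-reversing class moves $v_s$ for any $s\ne0$ by Remark~\ref{rem:GL}(2). Amphichirality also automatically forces the type condition $\tau(-s)=\tau(s)$, since homeomorphisms preserve surface type. So no case is overlooked, and I expect no real obstacle beyond this bookkeeping.
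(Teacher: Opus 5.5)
Your proposal is correct and is the argument the paper intends. The paper states this corollary without a separate proof, as the immediate comparison of $\Aut(\ES^{\mathrm{per}}(E))$ from Proposition~\ref{prop:aut-slope-separated} with $\Image\Phi_{\ES^{\mathrm{per}}}$ from Theorem~\ref{thm:visibility}; both are subgroups of the order-two group generated by $v_s\mapsto v_{-s}$, and that is exactly how you argue.
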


\begin{question}[A chiral knot with symmetric peripheral data]
\label{q:chiral-symmetric}
Does there exist a slope-separated chiral knot satisfying the two
conditions in Corollary~\ref{cor:surjectivity}?  Such an example would
show that even type plus slope can retain a nongeometric symmetry.
\end{question}

\begin{remark}[$\ES^{\mathrm{per}}$ as a finite list]
\label{rem:invariant}
For a slope-separated knot, $\ES^{\mathrm{per}}(E)$ is simply
\[
  \{(s,\tau(s)):s\in S(K)\}
\]
up to the global sign change $s\mapsto-s$.  The reconstruction
question therefore begins with a very concrete finite datum.
\end{remark}

\section{Torus knots}
\label{sec:torus-knots}

We now redo the torus-knot example carefully.  Nothing in the surface
census is new: the direct classification of incompressible surfaces in
torus-knot exteriors is due to Tsau \cite{TsauTorusSurfaces}, and
Schultens's analysis of Kakimizu complexes of Seifert fibered spaces
is a closely related complex-theoretic precedent
\cite{SchultensKakimizuSeifert}.  What we add is that, once the census
is in hand, the image and kernel are immediate from
Theorem~\ref{thm:visibility}.

Throughout, $K=T(p,q)$ is a nontrivial torus knot with $1<p<q$ and
$\gcd(p,q)=1$ --- every nontrivial torus knot is of this form up to
mirror image --- and $E=E(T(p,q))$.  Then $E$ is Seifert fibered over
a disk with two exceptional fibres of orders $p$ and $q$; the
characteristic submanifold is all of $E$ and the characteristic
frontier is empty \cite{JS,Johannson}.  We write $F_0$ for the fibre
surface, a once-punctured surface of genus $(p-1)(q-1)/2$ with
boundary slope $0$, and $A$ for the cabling annulus, of boundary slope
$pq$ \cite{Rolfsen,HatcherNotes}.

\begin{lemma}[Classical census]
\label{lem:torus-classification}
$\ES(E)$ consists of exactly the two vertices $[F_0]$ and $[A]$, with
no edge.  In particular $T(p,q)$ is slope-separated with
$S=\{0,pq\}$.
\end{lemma}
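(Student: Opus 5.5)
The plan is to use the Seifert fibered structure of $E$ over the orbifold $D^2(p,q)$, together with the classical dichotomy that every essential surface in a Seifert fibered space is isotopic to a vertical or a horizontal one \cite{HatcherNotes,Waldhausen1967}. The census then splits into two cases, and in each we show there is exactly one isotopy class. The absence of an edge comes for free from Lemma~\ref{lem:slope-layer} once the slopes $0$ and $pq$ are seen to differ.

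\emph{Vertical surfaces.} A vertical essential surface is the preimage of an essential one-dimensional suborbifold of $D^2(p,q)$ avoiding the cone points.
\begin{enumerate}[label=(\alph*),leftmargin=*]
\item \emph{Closed loops.} A simple closed curve in the base bounds a disk containing $0$, $1$ or $2$ cone points. Its preimage torus is then, respectively, compressible, bounds a solid torus (so is compressible), or is parallel to $\partial E$. Hence there is no closed essential surface.
\item \emph{Arcs.} A properly embedded arc cuts the disk into two pieces, distributing the two cone points between them. If one piece contains no cone point, the preimage annulus is boundary-parallel. If each side contains one cone point, the arc is unique up to isotopy rel cone points, since the pure mapping class group of a disk with two marked points is generated by the half-twist, which fixes this arc's isotopy class up to orientation.
\end{enumerate}
Its preimage is the cabling annulus $A$. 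Essentiality holds because each side is a solid torus in which $A$ winds $p\ge2$ or $q\ge2$ times, so $A$ is not boundary-parallel and, being an annulus whose core is not null-homotopic, it is incompressible. Its slope is the regular fibre slope $pq$.

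\emph{Horizontal surfaces.} A horizontal surface with boundary in $E$ is the fibre of a fibration $E\to S^1$ transverse to the Seifert fibres. Its class in $H_2(E,\partial E)\cong\mathbb Z$ is nonzero, and connectedness forces primitivity. Fibres of fibrations in a fixed primitive cohomology class are isotopic, by the classical uniqueness of fibre surfaces (Thurston norm / Waldhausen). So there is a single class $[F_0]$, the Milnor fibre, which is nonseparating; by Remark~\ref{rem:homology-redundancy} its slope is $0$. Any horizontal surface is nonseparating, because it meets every fibre transversely and so cannot bound, and therefore lies in this class. Alternatively, one can quote Tsau \cite{TsauTorusSurfaces} directly for the full census.

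\emph{Assembling.} Since $pq\ne0$, the two vertices have distinct slopes, so there is no edge by Lemma~\ref{lem:slope-layer}. Conditions (i)--(iii) of Definition~\ref{def:slope-separated-intro} then follow:
\begin{enumerate}[label=(\roman*),leftmargin=*]
\item there is no closed vertex, by (a);
\item no vertex has the meridian slope $\infty$;
\item the two vertices have distinct slopes.
\end{enumerate}

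The main obstacle is the horizontal uniqueness step. One must argue carefully that a horizontal essential surface is connected-fibre-like, i.e.\ genuinely a fibre rather than merely a transverse surface, which is where the $\pi_1$ or Euler-characteristic covering argument over the base orbifold is needed; everything else is bookkeeping in the base orbifold.
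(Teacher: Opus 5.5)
Your proposal is correct and is essentially the paper's argument: the vertical/horizontal dichotomy, the base-orbifold analysis of $D^2(p,q)$ leaving only the arc separating the two cone points (whose preimage is $A$), uniqueness of the horizontal class as the fibre $F_0$ because $H^1(E;\mathbb Z)\cong\mathbb Z$, and Lemma~\ref{lem:slope-layer} to exclude an edge --- you simply supply details that the paper leaves to Section~\ref{sec:firstlook}. Two small repairs are needed: the half-twist is not pure, since it exchanges the cone points, but the pure group is generated by the full twist, which is a boundary twist and also preserves the arc; and transversality alone does not make a horizontal surface nonseparating (the semi-bundle case shows this), the correct reason here being that fibres over the orientable base $D^2(p,q)$ can be coherently oriented, so a regular fibre meets the surface with nonzero algebraic intersection number.
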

\begin{proof}
The base-orbifold argument was given in Section~\ref{sec:firstlook}.
The standard classification of essential surfaces in Seifert-fibered
spaces says that every essential surface is vertical or horizontal
\cite{HatcherNotes,Waldhausen1967,TsauTorusSurfaces}.  In
$D^2(p,q)$ there is one essential vertical arc separating the two cone
points, whose preimage is the cabling annulus $A$; the horizontal
class is the fibre $F_0$.  Their slopes are $pq$ and $0$, so they do
not span an edge.  No closed or meridional essential surface occurs.
\end{proof}

\begin{lemma}[Classical mapping class group]
\label{lem:torus-mcg}
$\Mod^{\pm}(E)\cong\mathbb Z/2$, generated by the
class of the strong inversion $\sigma$, and every mapping class of $E$
is orientation-preserving.
\end{lemma}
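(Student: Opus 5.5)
We want $\Mod^{\pm}(E)\cong\mathbb Z/2$, generated by the strong inversion $\sigma$, with every class orientation-preserving. The approach is to split this into two parts: the orientation-preserving subgroup $\Mod^{+}(E)\cong\mathbb Z/2$, and the absence of any orientation-reversing class.

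\emph{No orientation-reversing classes.} This part is already available. An orientation-reversing homeomorphism of $E$ would make $K$ amphichiral, and Corollary~\ref{cor:chirality} excludes that, since $S(K)=\{0,pq\}$ with $pq\neq0$ is not symmetric about the origin. One concern is circularity: Corollary~\ref{cor:chirality} cites Lemma~\ref{lem:torus-classification}, not the present lemma, so the reasoning is not circular. Equivalently, such a homeomorphism would negate slopes (Remark~\ref{rem:GL}) and so would carry the cabling annulus $A$, of slope $pq$, to an essential annulus of slope $-pq$, which does not exist. Hence $\Mod^{\pm}(E)=\Mod^{+}(E)$.

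\emph{Computing $\Mod^{+}(E)$.} Here I would use the Seifert structure. Because $E$ is Seifert fibered over $D^2(p,q)$ with $p\neq q$ and is not one of the small exceptional Seifert manifolds (solid torus, $T^2\times I$, twisted $I$-bundle over the Klein bottle), the Seifert fibration is unique up to isotopy (Waldhausen). Every homeomorphism is therefore isotopic to a fibre-preserving one, and it induces an orbifold homeomorphism of the base $D^2(p,q)$ that fixes each cone point, since $p\neq q$. Up to isotopy, the base mapping class group relative to the cone points is $\{1,r\}$, where $r$ is the reflection in the arc $\alpha$.

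Next, the fibre-preserving mapping classes that cover the identity of the base are vertical Dehn twists. The candidates are twists along the vertical annulus $A$ and twists along a boundary-parallel torus. The first is isotopic to the identity, because fibre-direction twists can be slid across the exceptional fibres when $H_1$ is generated appropriately. The second is trivial in $\pi_0\Homeo(E)$, because boundary components are allowed to move. I would verify this by the exact sequence for fibre-preserving maps, or more cleanly by citing Johannson's result that $\Mod(E)$ of a Seifert piece with incompressible boundary is finite modulo boundary twists, and then checking on $\pi_1E=\langle x,y\mid x^p=y^q\rangle$.

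\emph{Main step: the algebraic check.} This is where I expect the main obstacle, and my plan is to identify $\Mod^{\pm}(E)$ with $\Out(\pi_1E)$. That identification holds because $E$ is Haken, has incompressible boundary, and is not a product, using Waldhausen together with the fact that peripheral structure is automatic here by Gordon--Luecke.

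The key computation is $\Out(\langle x,y\mid x^p=y^q\rangle)\cong\mathbb Z/2$. The center is $\langle x^p\rangle$, and the quotient is $\mathbb Z/p*\mathbb Z/q$. An automorphism of the quotient sends $x$ to a conjugate of $x^{\pm1}$-powers, so $x\mapsto x^a$ and $y\mapsto y^b$ up to conjugation. Lifting the automorphism to the amalgam forces $x^{pa}=y^{qb}$ in the central subgroup, which leaves only $a=b=\pm1$. The choice $a=b=-1$ is realized by $\sigma$, which inverts the fibre and reflects the base, and it is nontrivial in $\Out$ because it acts as $-1$ on $H_1=\mathbb Z$.

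This leaves a single orientation class. Since $\sigma$ reverses both the fibre and the base, it preserves orientation, which agrees with the first part.
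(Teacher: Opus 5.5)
Your proof is correct, and its core is the paper's proof: Waldhausen gives injectivity of $\Mod^{\pm}(E)\to\Out(\pi_1E)$, Schreier gives $\Out(\pi_1E)\cong\mathbb Z/2$, and $\sigma$ realizes the nontrivial class, detected by its action $-1$ on $H_1(E)\cong\mathbb Z$.

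\emph{Where you differ.} The real difference is how you exclude orientation-reversing classes.

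The paper gets this for free from the group computation. Both outer classes are represented by the orientation-preserving maps $\mathrm{id}$ and $\sigma$. An orientation-reversing homeomorphism would be homotopic to one of them, and homotopic maps act identically on $H_3(E,\partial E;\mathbb Z)$.

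You instead combine the sign rule of Remark~\ref{rem:GL} with the census $S=\{0,pq\}$. This is valid and not circular. It has the merit of proving $\Mod^{\pm}(E)=\Mod^{+}(E)$ with no $\pi_1$ input, in the spirit of Corollary~\ref{cor:chirality}. But once the injection into a group of order two is in place, it is redundant: $\{\mathrm{id},\sigma\}$ already exhausts $\Mod^{\pm}(E)$.

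\emph{The Seifert-fibration paragraph.} This is likewise an alternative route to $|\Mod^{+}(E)|\le2$ that avoids Schreier. If you keep it, the remark about $H_1$ being ``generated appropriately'' has to be replaced by an argument. Two options:
\begin{itemize}
\item Take the basepoint on $A$. Then $t_A$ fixes $\langle x\rangle$ and conjugates $\langle y\rangle$ by the central fibre class $x^p=y^q$. So it induces the identity on $\pi_1E$ and is trivial by Waldhausen.
\item Give the isotopy directly. Rotate the collar of $A$ by $2\pi t\rho(u)$, and rotate the rest of the solid torus on the $y$-side by $2\pi t$ along the fibres. This is an isotopy from $\mathrm{id}$ to $t_A$, legitimate because $\partial E$ may move.
\end{itemize}

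\emph{Tightening the algebraic step.} Two points need work.

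First, you cannot conjugate both generators into normal form at once. After composing with an inner automorphism you only get $x\mapsto x^{\varepsilon}$ and $y\mapsto k\,y^{\varepsilon}k^{-1}$. The central corrections are absorbed: $x^{a}c^{m}=x^{a+pm}$, and the requirement $c\mapsto c^{\pm1}$ forces $a+pm=b+qn=\pm1$. Showing that the conjugator $k$ can then be removed (for example via the Bass--Serre tree of $\mathbb Z/p*\mathbb Z/q$) is the actual content of Schreier's theorem. Either cite it, as the paper does, or supply this step.

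Second, Gordon--Luecke says nothing about automorphisms of $\pi_1E$ preserving the peripheral structure. You also do not need such a statement: injectivity, together with the fact that $\sigma$ hits the nontrivial outer class, already gives $\Mod^{\pm}(E)\cong\mathbb Z/2$ without any surjectivity theorem.
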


\begin{proof}
The group
$G=\pi_1E\cong\langle x,y\mid x^p=y^q\rangle$ has $\Out(G)\cong
\mathbb Z/2$, generated by the inversion $x\mapsto x^{-1},\
y\mapsto y^{-1}$; this is due to Schreier \cite{Schreier}.  Since $E$
is Haken, Waldhausen's theorems \cite{Waldhausen} imply that homotopic
homeomorphisms are isotopic, so $\pi_0\Homeo(E)\to\Out(G)$ is
injective, and the nontrivial outer class is realized by the strong
inversion $\sigma$ of the torus knot.  Both classes contain
orientation-preserving representatives ($\mathrm{id}$ and $\sigma$).
An orientation-reversing homeomorphism would be homotopic to one of
them, which is impossible: homotopic maps induce the same map on
$H_3(E,\partial E;\mathbb Z)\cong\mathbb Z$, on which
orientation-reversing homeomorphisms act by $-1$.  For classical
computations of mapping class groups of elementary $3$-manifolds
compare \cite{Bonahon,McCulloughMillerZimmermann,Johannson}.
\end{proof}

\begin{corollary}[Torus-knot action calculation]
\label{cor:torus-final}
Let $E=E(T(p,q))$.  For each of
\[
  X=\ES(E),\qquad \ES^{\mathrm{per}}(E),\qquad \CES(E),
\]
the natural action is trivial:
\[
  \mathcal K_X(E)=\Mod^{\pm}(E)=\langle[\sigma]\rangle\cong
  \mathbb Z/2,
  \qquad \Image\Phi_X=1.
\]
Moreover
\[
  \Aut(\ES^{\mathrm{per}}(E))=
  \Aut(\CES(E))=1,
\]
whereas $\Aut(\ES(E))\cong\mathbb Z/2$, generated by a transposition
induced by no homeomorphism.
\end{corollary}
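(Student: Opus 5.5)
The plan is to combine Lemma~\ref{lem:torus-classification}, Lemma~\ref{lem:torus-mcg} and Theorem~\ref{thm:visibility}; the only step needing separate care is the automorphism group of $\CES(E)$.

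\textbf{Trivial action.} By Lemma~\ref{lem:torus-classification}, $K=T(p,q)$ is slope-separated with $S(K)=\{0,pq\}$. By Lemma~\ref{lem:torus-mcg}, every mapping class is orientation-preserving, so $\Mod^{\pm}(E)=\Mod^{+}(E)=\langle[\sigma]\rangle\cong\mathbb Z/2$. The characteristic frontier is empty, since the characteristic submanifold is all of $E$. Theorem~\ref{thm:visibility} therefore applies to all three levels $X$ and gives $\Mod^{+}(E)\subseteq\mathcal K_X(E)$. Hence $\mathcal K_X(E)=\Mod^{\pm}(E)$ and $\Image\Phi_X=1$. Alternatively, $K$ is chiral by Corollary~\ref{cor:torus-chiral}, so the amphichiral case of the theorem does not arise.

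\textbf{Automorphism groups.} For the bare complex, Proposition~\ref{prop:aut-slope-separated} with $b=2$ gives $\Aut(\ES(E))\cong S_2\cong\mathbb Z/2$. The transposition $[F_0]\leftrightarrow[A]$ is not induced by any homeomorphism: the image of $\Phi_{\ES}$ is already trivial, and in any case a homeomorphism cannot carry an annulus to a surface of genus $(p-1)(q-1)/2\ge1$. For $\ES^{\mathrm{per}}(E)$, the sign involution would require $S(K)=-S(K)$. This fails because $pq\neq0$ and $-pq\notin\{0,pq\}$, so Proposition~\ref{prop:aut-slope-separated} gives $\Aut(\ES^{\mathrm{per}}(E))=1$. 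The surface types also differ (annulus versus once-punctured surface of positive genus), which independently rules out swapping the two vertices.

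\textbf{The $\CES$ level.} By Definition~\ref{def:automorphisms}, automorphisms of $\CES(E)$ are peripheral automorphisms preserving additional data, so $\Aut(\CES(E))\subseteq\Aut(\ES^{\mathrm{per}}(E))=1$. The only point to check is that this subgroup description is legitimate before $\CES$ is formally defined in Section~\ref{sec:satellite}. I expect this to be the main obstacle, but a mild one. I would argue directly from Definition~\ref{def:automorphisms}: whatever characteristic data are added, they only restrict the peripheral automorphisms, and with empty frontier they are trivial, as in the proof of Theorem~\ref{thm:visibility}.
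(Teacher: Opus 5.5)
Your proposal is correct and follows the paper's proof essentially step by step. Both arguments get slope-separatedness from Lemma~\ref{lem:torus-classification}, note that the characteristic frontier is empty, and apply Theorem~\ref{thm:visibility} at all three levels. Both then take $S_2$ for the bare complex and use distinct slopes and surface types to rule out the transposition at the peripheral level.

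The step you flag as the main obstacle is not really one. Definition~\ref{def:automorphisms} already defines automorphisms of $\CES(E)$ as peripheral automorphisms that preserve extra data, so $\Aut(\CES(E))\subseteq\Aut(\ES^{\mathrm{per}}(E))=1$ follows immediately. If anything, this is slightly cleaner than the paper's remark that the characteristic level adds no further data.
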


\begin{proof}
By Lemma~\ref{lem:torus-classification} the knot is slope-separated
with $S=\{0,pq\}$, and it is chiral by
Corollary~\ref{cor:torus-chiral}.  Its characteristic frontier is
empty.  Theorem~\ref{thm:visibility} therefore gives the trivial image
and full kernel at all three levels.  The bare complex has two
vertices, so its automorphism group is $S_2$.  At the peripheral level
the two vertices have both different slopes and different surface
types (annulus versus positive genus), so no transposition is allowed;
the characteristic level adds no further nontrivial data in this
Seifert-fibered one-piece case.
\end{proof}

\begin{remark}[Three morals]
\label{rem:morals}
First, the kernel is already nontrivial in the simplest example, and
its generator is a finite symmetry rather than a twist; no refinement
by annular or toral twist coordinates can remove it.  Second, this is
analogous to the low-complexity hyperelliptic exceptions of the
classical theory \cite{Korkmaz,Luo}: the object is too small to detect
the whole mapping class group.  Third, the example bounds the necessary peripheral information from both sides.  The bare complex is too weak, admitting a nongeometric transposition, but either surface type alone or boundary slope alone already removes it.  This is why the main framework combines only these two elementary data and does not introduce further standard decorations.
\end{remark}

\section{The figure-eight knot}
\label{sec:figure-eight}

The figure-eight knot is the smallest hyperbolic test of the
image--kernel problem, and the first example in which the geometric
image is nontrivial.  Neither the surface census nor the symmetry
group is new: the former is extracted from Hatcher--Thurston
\cite{HatcherThurston}, and the latter, with its action, is described
in Heusener--Mu\~noz--Porti \cite[Section~9]{HeusenerMunozPorti}; see
also \cite{SakumaSurvey}.  Hatcher and Thurston write the figure-eight
knot as $K_{3/5}$; we use their continued-fraction conventions only
inside the proof of Lemma~\ref{lem:figure-eight-census}.

We first record the general hyperbolic input, which is used again in
Section~\ref{sec:satellite}.

\begin{corollary}[Classical hyperbolic consequence]
\label{cor:hyperbolic-kernel}
Let $K$ be a hyperbolic knot and $E=E(K)$.  Then $\Mod^{\pm}(E)$
is finite and isomorphic to the isometry group of the complete
hyperbolic structure on $\Int E$; $E$ contains no essential annulus
or torus, so $\Fr\Char(E)=\varnothing$; and for each of the three objects $X$ the kernel $\mathcal K_X(E)$ is the finite group of
isometry classes acting trivially on $X$.  Consequently, if every
automorphism of $X$ is geometric then $\Aut(X)$ is
finite.
\end{corollary}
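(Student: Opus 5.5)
The plan is to assemble this corollary from classical hyperbolic rigidity and then read off the three assertions in turn. First, I will invoke Mostow--Prasad rigidity \cite{Thurston}. Since $K$ is hyperbolic, $\Int E$ carries a complete finite-volume hyperbolic metric. Every self-homeomorphism of $E$ is homotopic to an isometry of $\Int E$. Because $E$ is Haken, Waldhausen \cite{Waldhausen} says homotopic homeomorphisms are isotopic, as already used in Lemma~\ref{lem:torus-mcg}. This gives an isomorphism $\Mod^{\pm}(E)\cong\operatorname{Isom}(\Int E)$. The isometry group of a finite-volume hyperbolic manifold is finite: it acts properly discontinuously on a compact thick part, or one may argue via finiteness of $\Out(\pi_1E)$. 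This gives finiteness. The injectivity of isometries into $\Mod^{\pm}(E)$ is the one point to check carefully. A nontrivial isometry isotopic to the identity would be homotopic to the identity, and by rigidity it would then equal the identity.

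Second, I will show $E$ contains no essential annulus or torus. Hyperbolicity rules out essential tori, since $\pi_1$ of a finite-volume hyperbolic manifold contains no $\mathbb Z^2$ other than peripheral subgroups; an essential torus would be non-peripheral. For annuli, an essential annulus in a knot exterior whose exterior is atoroidal forces $E$ to be Seifert fibered (torus knot) or a cable/composite configuration, by the standard annulus theorem \cite{JS,Johannson}. Each of these possibilities contradicts hyperbolicity. Hence the characteristic submanifold is either empty or a collar of $\partial E$, and in either case $\Fr\Char(E)=\varnothing$.

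Third, the kernel statement is formal. By Lemma~\ref{lem:natural-action}, $\mathcal K_X(E)=\ker\Phi_X$ is a subgroup of the finite group $\Mod^{\pm}(E)\cong\operatorname{Isom}(\Int E)$. Under the identification, it is exactly the set of isometry classes acting trivially on $X$. Finally, if every automorphism of $X$ is geometric, then $\Phi_X$ is surjective. In that case $\Aut(X)\cong\Mod^{\pm}(E)/\mathcal K_X(E)$ is a quotient of a finite group, hence finite.

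The main obstacle is the annulus step, which must be stated carefully. The cleanest route is to cite that a knot exterior containing an essential annulus is a torus-knot, cable, or composite exterior. Each of these contains either a Seifert-fibered structure or an essential torus, so none is hyperbolic. Everything else is a direct citation of rigidity plus Waldhausen.
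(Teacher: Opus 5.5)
Your proposal is correct and takes the same approach as the paper. Mostow--Prasad rigidity plus Waldhausen identifies $\Mod^{\pm}(E)$ with the finite isometry group; hyperbolicity makes $E$ atoroidal and anannular, so the characteristic submanifold is a collar of $\partial E$; and the kernel and finiteness statements are then formal. One point to tighten: a collar has a boundary-parallel torus as its frontier, so $\Fr\Char(E)=\varnothing$ holds only by the redundancy-deletion convention of Theorem~\ref{thm:classical-jsj-input}(2), which is how the paper justifies it, not literally ``in either case.''
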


\begin{proof}
By Mostow--Prasad rigidity and Waldhausen's theorem the mapping class
group of a hyperbolic knot exterior is the isometry group of the
complete finite-volume structure, which is finite
\cite{Thurston,Waldhausen,Budney}.  A hyperbolic exterior is atoroidal
and anannular, so its characteristic submanifold is a collar of
$\partial E$, whose frontier consists of a boundary-parallel torus;
this is deleted as redundant in
Theorem~\ref{thm:classical-jsj-input}(2), so
$\Fr\Char(E)=\varnothing$.  In particular no essential annulus or
torus is present and the characteristic data are trivial.  The remaining statements are
immediate.
\end{proof}

\begin{lemma}[Classical census]
\label{lem:figure-eight-census}
Let $E_8=E(4_1)$.  Then $\ES(E_8)$ has exactly three vertices,
represented by
\begin{enumerate}[label=(\roman*),leftmargin=*]
\item a once-punctured torus $F$ of boundary slope $0$;
\item two twice-punctured tori $S_+$ and $S_-$ of boundary slopes
$+4$ and $-4$.
\end{enumerate}
All three are isolated, $F$ is nonseparating and represents a
generator of $H_2(E_8,\partial E_8;\mathbb Z)$ up to sign, and
$S_\pm$ are separating and represent zero.  In particular $4_1$ is
slope-separated with $S=\{0,+4,-4\}$.
\end{lemma}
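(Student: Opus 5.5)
The census comes straight from Hatcher--Thurston's classification of essential surfaces in two-bridge knot exteriors \cite{HatcherThurston}. Write $4_1=K_{3/5}$ and take the continued-fraction expansion $5/3=[2,-2]$ (in their $r=[a_1,\dots,a_k]$ convention with $|a_i|\ge2$). Hatcher--Thurston's Theorem~1 says that every essential surface is isotopic to one of their candidate surfaces $S_n(n_1,\dots,n_{k-1})$, built from the plumbing or branched-surface description. Here $k=2$, so there is a single index $n_1$, ranging over the finitely many allowed values for the expansion $[2,-2]$ and its companion expansions. Their Proposition~1 produces infinite families of $n$-sheeted surfaces only when at least two coefficients are odd. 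For $4_1$ all coefficients are even, so only $n=1$ occurs, and only finitely many candidates survive.

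I will enumerate the surviving candidates and compute, for each one, its boundary slope by their twist-count formula and its Euler characteristic. Two candidates are expected.

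\begin{itemize}
\item The standard plumbing of two twisted bands gives the fibre, a once-punctured torus of slope $0$.
\item The remaining candidates come from the expansion with the opposite sign pattern. They give surfaces with $\chi=-2$ and two boundary components, hence twice-punctured tori, of slopes $\pm4$.
\end{itemize}

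I will invoke their incompressibility criterion to see that each of these three is essential, and that distinct candidates with the same slope are isotopic. The candidate list is then exactly $\{F,S_+,S_-\}$. This step is the main obstacle: it is pure bookkeeping with Hatcher--Thurston's sign and slope conventions. As a cross-check, amphichirality together with Corollary~\ref{cor:chirality} forces the slope set to be symmetric, so $+4$ and $-4$ must appear together.

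The remaining claims are short. The three slopes are pairwise distinct, so Lemma~\ref{lem:slope-layer} rules out every edge. Hatcher--Thurston's surfaces all have nonempty boundary, and two-bridge exteriors contain no closed essential surface (Hatcher--Thurston, equivalently Menasco), so all three vertices are isolated. For homology, Remark~\ref{rem:homology-redundancy} shows that a surface of nonzero slope is separating, so by Lemma~\ref{lem:homology-binary} the surfaces $S_\pm$ represent $0$. The fibre $F$ is a Seifert surface and is nonseparating, since a loop meeting it once exists via the fibration. By Lemma~\ref{lem:homology-binary} it therefore represents a generator up to sign.

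Finally, slope-separatedness needs three conditions. There are no closed essential surfaces, as just noted. No surface has meridional slope, because the slopes $0,\pm4$ are even integers (Definition~\ref{def:slope-separated-intro}). The vertices are in bijection with the slopes. Hence $S=\{0,+4,-4\}$.
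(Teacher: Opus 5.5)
Your overall route matches the paper's. You read the census off Hatcher--Thurston \cite{HatcherThurston}, then get isolation from Lemma~\ref{lem:slope-layer} and the homology claims from Remark~\ref{rem:homology-redundancy} and Lemma~\ref{lem:homology-binary}. Those closing deductions are correct, and more explicit than the paper's.

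The gap is in the census itself, which is the substance of the lemma, and it is not mere sign bookkeeping. Your key claim, ``for $4_1$ all coefficients are even, so only $n=1$ occurs,'' is false, and the mechanism it describes cannot produce $S_\pm$:
\begin{itemize}
\item Only one of the three minimal expansions of $4_1$ is all-even, the one carrying the fibre. The two expansions carrying slopes $\pm4$ ($[2,3]$ and $[-3,-2]$ in the paper's convention) each contain an odd coefficient. So there is no all-even expansion ``with the opposite sign pattern'' for the remaining surfaces to come from.
\item A one-sheeted carried surface is a spanning surface. Its boundary meets each meridian once, so it has a single boundary curve and can never be a twice-punctured torus.
\end{itemize}
If only $n=1$ occurred, your enumeration would return only surfaces with connected boundary and would miss $S_\pm$ entirely.

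What is missing is the one-sided/doubling step on which the paper's proof rests.
\begin{itemize}
\item For each odd expansion, the one-sheeted carried surface is nonorientable, because the band with an odd number of half-twists is a M\"obius band. It is a one-sided once-punctured Klein bottle $N_\pm$ with $\chi=-1$, and it is excluded as a vertex by Definition~\ref{def:essential}.
\item The vertex is the orientable \emph{two}-sheeted carried surface $S_\pm=\partial N(N_\pm)$. This is the orientation double cover of $N_\pm$, with $\chi=-2$ and two boundary curves of slope $\pm4$, hence a twice-punctured torus.
\item You then need Hatcher--Thurston's count for an expansion with a single odd coefficient. It shows that this double is the only connected orientable carried surface there: no second two-sheeted class, and no connected orientable surface with more sheets.
\item You also need that the all-even expansion carries only $F$.
\end{itemize}
With that correction, the rest of your argument goes through as written.
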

\begin{proof}
This is the Hatcher--Thurston calculation recalled pictorially in
Section~\ref{sec:firstlook}.  For $K_{3/5}=4_1$, the three relevant
minimal continued-fraction expansions are
\[
  [-2,2],\qquad [2,3],\qquad [-3,-2],
\]
with boundary slopes $0,+4,-4$ \cite{HatcherThurston}.  The all-even
expansion gives the orientable once-punctured-torus fibre $F$.  The two
odd expansions first give one-sided once-punctured Klein bottles; the
frontiers of their regular neighbourhoods are the unique orientable
two-sheeted classes $S_+$ and $S_-$, each a twice-punctured torus.
Hatcher--Thurston's count gives no further connected orientable
essential surfaces, and there is no closed essential surface.  The
three slopes are distinct, so no two vertices are adjacent.  Thus
$4_1$ is slope-separated with $S=\{0,+4,-4\}$.
\end{proof}

\begin{lemma}[Peripheral action of a strong inversion]
\label{lem:strong-inversion-sign}
Let $K$ admit a strong inversion, that is, an orientation-preserving
involution $\iota$ of $(S^3,K)$ whose fixed-point set is an unknotted
circle meeting $K$ in two points.  Then $\iota|_{E(K)}$ is an
orientation-preserving mapping class acting on
$H_1(\partial E(K);\mathbb Z)$ by $-\mathrm{id}$ in the ordered
oriented basis $(\mu,\lambda)$.
\end{lemma}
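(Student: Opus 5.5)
The plan is to argue locally near a fixed point of $\iota$ on $K$, and then use the global constraint from Remark~\ref{rem:GL}. First, $\iota$ preserves $K$ and is orientation-preserving on $S^3$. We may take $N(K)$ to be $\iota$-invariant, for example the tube of an invariant metric. Then $\iota$ restricts to an orientation-preserving self-homeomorphism of $E(K)$, and we have $\iota^2=\mathrm{id}$. Since $\iota$ preserves the orientation of $E$, it preserves the induced orientation of $\partial E$. So its action on $H_1(\partial E;\mathbb Z)$ has determinant $+1$.

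By Remark~\ref{rem:GL}(1) this action sends $(\mu,\lambda)$ to $\pm(\mu,\lambda)$ with a single common sign. It therefore suffices to show that $\iota$ reverses the orientation of $K$. Once that is known, $\lambda$, being parallel to $K$ in $N(K)$ and preserved as a slope, is sent to $-\lambda$. The determinant condition then forces $\mu\mapsto-\mu$, so the action is $-\mathrm{id}$.

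The key step is the orientation reversal on $K$. Let $x$ be one of the two points of $\operatorname{Fix}(\iota)\cap K$. Near $x$, $\iota$ is conjugate to a rotation by $\pi$ about the axis $\operatorname{Fix}(\iota)$. This uses smoothability, or local linearity, of PL involutions of $S^3$, which is classical (Waldhausen, Livesay). The axis is transverse to $K$ at $x$: if $K$ were tangent to the axis near $x$, the arc of $K$ would be pointwise fixed, contradicting the fact that the fixed set meets $K$ in only two points. Rotation by $\pi$ about an axis perpendicular to $K$ at $x$ maps the tangent direction of $K$ to its negative. Hence $\iota|_K$ is an involution of the circle with exactly two fixed points, that is, a reflection, and so it reverses the orientation of $K$.

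As a cross-check on $\mu$, the meridian disk of $N(K)$ at $x$ is mapped to itself by the $\pi$-rotation. The rotation axis lies in this disk and meets its boundary circle in two points, so the boundary circle $\mu$ is reflected. This independently gives $\mu\mapsto-\mu$ and confirms the determinant bookkeeping.

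The main obstacle is justifying the local model at $x$ in the PL category. I would cite the local linearity of PL involutions with one-dimensional fixed set. Alternatively, I would note that an involution of the circle $K$ with exactly two fixed points must be conjugate to a reflection, since an orientation-preserving involution of $S^1$ is either the identity or fixed-point free. This second route gives the orientation reversal on $K$ directly, without the local model.
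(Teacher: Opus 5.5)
Your proof is correct, but it fixes the sign by a slightly different route from the paper's.

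The paper takes the reversal of $K$ as given and reads the sign off the meridian. Orienting $\mu$ by $\lk(\mu,K)=+1$, the orientation-preserving map $\iota$ preserves linking numbers, so $\lk(\iota\mu,K)=-\lk(\iota\mu,\iota K)=-1$, whence $\iota\mu=-\mu$. The common sign of Remark~\ref{rem:GL}(1) then gives $\lambda\mapsto-\lambda$.

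You read the sign off the longitude instead. You use the $\iota$-equivariant map $H_1(\partial N(K))\to H_1(N(K))\cong\mathbb Z$, which sends $\lambda$ to the core class $[K]$, and let the common sign (equivalently $\det=+1$) force $\mu\mapsto-\mu$. On your route the sign on $\lambda$ uses only the reversal of $K$, and orientation enters only through the determinant on $\partial E$.

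More usefully, you prove the reversal, which the paper merely asserts. A nontrivial orientation-preserving involution of $S^1$ is fixed-point free, so an involution of $K$ with exactly two fixed points is a reflection. That observation is all that is needed, so the local-linearity discussion, the tangency remark, and the meridian-disk cross-check are superfluous. What is worth keeping is your explicit choice of an $\iota$-invariant $N(K)$, which both arguments require and the paper leaves implicit.
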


\begin{proof}
The involution reverses the orientation of $K$.  Orient $\mu$ by
$\lk(\mu,K)=+1$.  Since $\iota$ preserves the orientation of $S^3$ it
preserves linking numbers, so $\lk(\iota(\mu),\iota(K))=+1$; and since
$\iota(K)$ is $K$ reversed, $\lk(\iota(\mu),K)=-1$, whence
$\iota(\mu)=-\mu$.  As $\iota|_{E(K)}$ is orientation-preserving,
Remark~\ref{rem:GL} gives $(\mu,\lambda)\mapsto\pm(\mu,\lambda)$, and
the meridian determines the sign.
\end{proof}

\begin{lemma}[Classical symmetry group]
\label{lem:figure-eight-action}
$\Mod^{\pm}(E_8)$ is dihedral of order eight, with
orientation-preserving subgroup
$\Mod^{+}(E_8)\cong\mathbb Z/2\oplus\mathbb Z/2$.
\end{lemma}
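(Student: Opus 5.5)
The plan is to pass through hyperbolic geometry and identify $\Mod^{\pm}(E_8)$ with $\mathrm{Isom}(\Int E_8)$. By Corollary~\ref{cor:hyperbolic-kernel} (Mostow--Prasad together with Waldhausen), $\Mod^{\pm}(E_8)\cong\mathrm{Isom}(S^3\setminus 4_1)$, so it suffices to compute this finite isometry group. The first step is to bound it from above. Since the cusp is unique, every isometry restricts to an isometry of a maximal horotorus, that is, a Euclidean similarity of the cusp torus. The cusp shape of $4_1$ is the square-root-of-three-related parallelogram lattice spanned by $\mu$ and $\lambda$ with $\lambda=2\sqrt{-3}\,\mu$. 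By Remark~\ref{rem:GL}, the action on $H_1(\partial E_8)$ lies in the group of matrices preserving $\{\pm\mu\}$ and $\{\pm\lambda\}$, namely $\{\mathrm{diag}(\pm1,\pm1)\}\cong(\mathbb Z/2)^2$.

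The second step is to show that the kernel of the restriction to the cusp has order at most two, so that $|\Mod^{\pm}(E_8)|\le 8$. An isometry acting trivially on $H_1(\partial E_8)$ acts on the Euclidean cusp torus as a translation. Such translations form a finite cyclic-type subgroup, detectable through the canonical Epstein--Penner decomposition into two regular ideal tetrahedra. The symmetry group of this decomposition is the full isometry group. Its order is bounded by the number of tetrahedron-to-tetrahedron face-pairing-compatible maps, and I would check directly from the gluing pattern that this gives at most $8$, with translations contributing a $\mathbb Z/2$.

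The third step is to exhibit enough symmetries. There are two commuting strong inversions $\iota_1,\iota_2$ (the figure-eight is two-bridge, so it is strongly invertible in two ways), and by Lemma~\ref{lem:strong-inversion-sign} each acts by $-\mathrm{id}$ on $H_1(\partial E)$. Their product $\iota_1\iota_2$ is an orientation-preserving free period of order two on the cusp, acting by $+\mathrm{id}$ on homology but nontrivially, since it translates the cusp torus. Waldhausen's theorem, or the fact that a nontrivial finite-order isometry is not isotopic to the identity, gives $\langle\iota_1,\iota_2\rangle\cong(\mathbb Z/2)^2\subseteq\Mod^+(E_8)$. Amphichirality supplies an orientation-reversing homeomorphism $\rho$, which can be realized as an isometry of order four whose square is $\iota_1\iota_2$. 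This is the standard amphichiral symmetry of $4_1$ as a rotation-reflection.

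Finally, the group has order exactly $8$, with $\Mod^+=\langle\iota_1,\iota_2\rangle$ of index two, because orientation is a homomorphism onto $\mathbb Z/2$ and $\rho$ exists. It is nonabelian, since $\rho$ conjugates $\iota_1$ to $\iota_2$ (they are swapped by the mirror symmetry of a two-bridge diagram), and it contains an element of order $4$. Hence it is $D_4$, not $Q_8$, because it has several involutions, and not $\mathbb Z/8$ or an abelian group.

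The main obstacle is the upper bound in the second step, together with the verification that $\rho^2=\iota_1\iota_2$ rather than the identity. For both, I would rely on the explicit two-tetrahedron triangulation, or simply cite the classical computation in \cite{HeusenerMunozPorti,SakumaSurvey}, as the paper's own framing suggests.
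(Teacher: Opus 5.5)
There is a genuine gap at the step $\langle\iota_1,\iota_2\rangle\cong(\mathbb Z/2)^2\subseteq\Mod^{+}(E_8)$, and it is not among the obstacles you flag.

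Both strong inversions act by $-\mathrm{id}$ on $H_1(\partial E_8;\mathbb Z)$. So $\iota_1\iota_2$ acts by $+\mathrm{id}$, and no peripheral datum distinguishes $[\iota_1\iota_2]$ from the identity. (Incidentally, $\iota_1\iota_2$ is a cyclic period whose axis misses $K$; it acts freely on $\partial E_8$ but not on $E_8$.) Neither of your justifications shows $[\iota_1]\ne[\iota_2]$:
\begin{itemize}
\item Waldhausen's theorem turns homotopies into isotopies, so it can only help prove that maps \emph{are} isotopic. To show $\iota_1\iota_2$ is not isotopic to the identity, you need a nontrivial outer action on $\pi_1E_8$.
\item The remark that a nontrivial finite-order isometry is not isotopic to the identity applies only if $\iota_1$ and $\iota_2$ are themselves isometries. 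They come from a two-bridge diagram. If you replace each by the isotopic isometry given by Mostow--Prasad rigidity, you get isometries $\varphi_1,\varphi_2$ with $\varphi_1\ne\varphi_2$ if and only if $[\iota_1]\ne[\iota_2]$, which is exactly the point at issue.
\end{itemize}
Closing this requires real input. One option is the orbifold theorem, which conjugates the finite group $\langle\iota_1,\iota_2\rangle$ to an isometric action. Another is an explicit identification of both inversions with distinct symmetries of the two-tetrahedron decomposition.

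The same gap undermines your nonabelianness argument: $\rho$ exchanging $\iota_1$ and $\iota_2$ says nothing if $[\iota_1]=[\iota_2]$. In addition, the bound $|\Mod^{\pm}(E_8)|\le 8$ and the relation $\rho^2=\iota_1\iota_2$ are announced rather than proved.

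The paper avoids all of this with a one-inversion argument. It cites the dihedral group of order eight, as you propose to do as a fallback, and amphichirality then gives $|\Mod^{+}(E_8)|=4$. By Lemma~\ref{lem:strong-inversion-sign}, a single strong inversion $\iota$ gives a nontrivial involution $[\iota]$ acting by $-\mathrm{id}$. By Remark~\ref{rem:GL}, every element of $\Mod^{+}(E_8)$ acts by $\pm\mathrm{id}$, so every square acts by $+\mathrm{id}$. Thus $[\iota]$ is an involution that is not a square. This excludes $\mathbb Z/4$, whose unique involution is a square, and leaves $\mathbb Z/2\oplus\mathbb Z/2$.

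This argument needs no second inversion, no realization by isometries, and no statement about $\rho$. If you fall back on the citation for $D_4$, replace your $\langle\iota_1,\iota_2\rangle$ step by this squaring argument.
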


\begin{proof}
By Corollary~\ref{cor:hyperbolic-kernel} the mapping class group is
the isometry group of the complete hyperbolic structure, and the full
symmetry group of the figure-eight knot is dihedral of order eight;
see \cite[Section~9]{HeusenerMunozPorti} and, from the canonical
decomposition of $E_8$ into two regular ideal tetrahedra,
\cite[Theorem~6.2 and Example~6.3]{SakumaSurvey}.

These references identify the full group but do not single out which
index-two subgroup is orientation-preserving, so we check directly
that $\Mod^{+}(E_8)$ is a Klein four group.  The figure-eight knot
is amphichiral \cite{Rolfsen}, so $\Mod^{\pm}$ contains an
orientation-reversing class and $\Mod^{+}$ has index two, hence order
four.  Like every two-bridge knot, $4_1$ is strongly invertible: a
two-bridge knot in Schubert normal form is invariant under the
rotation by $\pi$ about an unknotted axis meeting it in two points,
and this rotation is a strong inversion $\iota$
\cite[Proposition~2.2]{HirasawaHiuraSakuma}.  By
Lemma~\ref{lem:strong-inversion-sign}, $[\iota]\in\Mod^{+}(E_8)$
acts on $H_1(\partial E_8;\mathbb Z)$ by $-\mathrm{id}$, so it is a
nontrivial involution.  If $\Mod^{+}(E_8)$ were cyclic of order
four with generator $g$, its unique involution would be $g^2$, so
$[\iota]=g^2$; but by Remark~\ref{rem:GL} the orientation-preserving
class $g$ acts by $\pm\mathrm{id}$, so $g^2$ acts by
$+\mathrm{id}\ne-\mathrm{id}$, a contradiction.  Hence
$\Mod^{+}(E_8)\cong\mathbb Z/2\oplus\mathbb Z/2$.
\end{proof}

\begin{corollary}[Figure-eight action calculation]
\label{cor:fig8-final}
Let $E_8=E(4_1)$.  Then
\begin{enumerate}[label=(\arabic*),leftmargin=*]
\item $\Aut(\ES(E_8))\cong S_3$, while the image of the natural action
is the order-two subgroup generated by
$[S_+]\leftrightarrow[S_-]$;
\item
\[
  \Aut(\ES^{\mathrm{per}}(E_8))\cong
  \Aut(\CES(E_8))\cong\mathbb Z/2;
\]
for both objects the natural action is surjective;
\item at all three levels the kernel is
\[
  \mathcal K_X(E_8)=\Mod^{+}(E_8)
  \cong\mathbb Z/2\oplus\mathbb Z/2,
\]
and the geometric image is $\mathbb Z/2$.
\end{enumerate}
\end{corollary}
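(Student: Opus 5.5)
The plan is to treat $4_1$ as a direct instance of Theorem~\ref{thm:visibility}, feeding in the census of Lemma~\ref{lem:figure-eight-census} and the symmetry group of Lemma~\ref{lem:figure-eight-action}. By Lemma~\ref{lem:figure-eight-census}, $4_1$ is slope-separated with $S=\{0,+4,-4\}$, so $b=3$. By Corollary~\ref{cor:hyperbolic-kernel}, $\Fr\Char(E_8)=\varnothing$, so the theorem applies at all three levels, including $X=\CES(E_8)$. The knot is amphichiral and $S\neq\{0\}$, so the theorem gives $\Image\Phi_X\cong\mathbb Z/2$, generated by $v_{4}\leftrightarrow v_{-4}$, that is, $[S_+]\leftrightarrow[S_-]$. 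It also gives $\Mod^{+}(E_8)\subseteq\mathcal K_X(E_8)$.

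For item (3), I would show the kernel is exactly $\Mod^{+}(E_8)$ by index. $\Mod^{+}$ has index two in $\Mod^{\pm}(E_8)$, and the image has order two, so $|\mathcal K_X|=8/2=4=|\Mod^{+}|$, and the inclusion is an equality. Equivalently, any orientation-reversing class negates slopes by Remark~\ref{rem:GL}, hence swaps $v_{\pm4}$ and is not in the kernel. Lemma~\ref{lem:figure-eight-action} then identifies the kernel as $\mathbb Z/2\oplus\mathbb Z/2$.

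For item (1), $\Aut(\ES(E_8))\cong S_3$ is immediate from Proposition~\ref{prop:aut-slope-separated}. For item (2), I would apply the same proposition at the peripheral level. We have $S=-S\neq\{0\}$, and $\tau(4)=\tau(-4)$ since both surfaces are twice-punctured tori, while $\tau(0)=\tau(0)$ trivially. So $\Aut(\ES^{\mathrm{per}}(E_8))\cong\mathbb Z/2$, generated by the sign involution. For $\CES(E_8)$, the empty characteristic frontier means the extra data are trivial, as in the proof of Theorem~\ref{thm:visibility}. Hence its automorphisms coincide with the peripheral ones, again $\mathbb Z/2$. Surjectivity then follows because the geometric image is already this $\mathbb Z/2$, realized by the amphichiral symmetry.

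The only point needing care is the $\CES$ level: one must confirm that its automorphism group is no smaller than the peripheral one. The sign involution must satisfy all of the trivial characteristic conditions. It does, since it is itself realized geometrically and Lemma~\ref{lem:natural-action} says geometric maps preserve all characteristic data. Everything else is bookkeeping from earlier results.
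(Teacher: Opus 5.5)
Your proposal is correct and follows essentially the same route as the paper: you apply Theorem~\ref{thm:visibility} using the census of Lemma~\ref{lem:figure-eight-census}, the empty characteristic frontier from Corollary~\ref{cor:hyperbolic-kernel}, and Lemma~\ref{lem:figure-eight-action}, and you read off the automorphism groups from Proposition~\ref{prop:aut-slope-separated}. You also make explicit two steps the paper passes over quickly: why the kernel equals $\Mod^{+}(E_8)$ rather than merely containing it (orientation-reversing classes swap $v_{\pm4}$), and why the sign involution lies in $\Aut(\CES(E_8))$ (it is realized geometrically).
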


\begin{proof}
By Lemma~\ref{lem:figure-eight-census}, $4_1$ is slope-separated with
$S=\{0,+4,-4\}$, and it is amphichiral.  By
Corollary~\ref{cor:hyperbolic-kernel} the characteristic frontier is
empty.  Theorem~\ref{thm:visibility} therefore gives image
$\mathbb Z/2$, generated by $v_s\mapsto v_{-s}$, and kernel
$\Mod^{+}(E_8)$, which Lemma~\ref{lem:figure-eight-action}
identifies with $(\mathbb Z/2)^2$.

The bare complex is three isolated vertices, so its automorphism group
is $S_3$.  For the peripheral refinement, the slope involution fixes
the slope-$0$ once-punctured torus and exchanges the two
slope-$\pm4$ twice-punctured tori; hence
Proposition~\ref{prop:aut-slope-separated} gives automorphism group
$\mathbb Z/2$.  The characteristic data are trivial in the hyperbolic
one-piece case, so $\CES$ has the same automorphism group.  Thus the
peripheral and characteristic actions are surjective.
\end{proof}

\begin{remark}[Why the partial views are useful here]
For the figure-eight knot, surface type alone also leaves exactly the
transposition $S_+\leftrightarrow S_-$, while slope alone leaves the
same sign involution.  Figure~\ref{fig:decorations-fig8} records this
without introducing two extra complexes into the main notation.
\end{remark}

\begin{remark}[The Kakimizu layer loses the visible symmetry]
\label{rem:fig8-kakimizu}
The figure-eight knot has a unique minimal-genus Seifert surface up to
isotopy: in the Hatcher--Thurston census the all-even expansion
$[-2,2]$ has no coefficient of absolute value greater than two, so it
carries only the single class $[F]$.  Hence $\operatorname{MS}(4_1)$
is a point and its automorphism group is trivial: the amphichiral
symmetry visible in Corollary~\ref{cor:fig8-final} is detected only
after the two non-Seifert essential surfaces are added.  This is the
opposite of the connected-sum situation of
Section~\ref{sec:composite}, where a Kakimizu subcomplex detects an
infinite-order annular twist and therefore detects it in $\ES(E)$ as
well.  The contrast is not one of incompatible complexes but of which
\emph{part} of the common essential-surface complex carries the useful
information; compare Proposition~\ref{prop:kakimizu-subcomplex}.
\end{remark}

\subsection{Beyond the first example: two-bridge knots}
\label{subsec:two-bridge-roadmap}

The proof above used only the census, chirality, and the sign rule.
This suggests a systematic two-bridge extension, but one point
requires care.  By Hatcher--Thurston, a minimal continued-fraction
expansion with at least two odd coefficients carries connected
$n$-sheeted surfaces for every $n$
\cite[Proposition~1]{HatcherThurston}, so the finite three-vertex
behaviour of $4_1$ is special and slope-separatedness cannot be
assumed.  Appendix~\ref{app:recipe} gives the computational recipe;
here is the program it leads to.

\begin{problem}[Two-bridge essential-surface rigidity]
\label{prob:two-bridge-rigidity}
Let $K$ be a hyperbolic two-bridge knot and $E=E(K)$.  Use the
Hatcher--Thurston coordinates $S_n(n_1,\ldots,n_{k-1})$ to determine
the natural actions on the bare complex $\ES(E)$ and on the peripheral refinement $\ES^{\mathrm{per}}(E)$ by solving:
\begin{enumerate}[label=(B\arabic*),leftmargin=*]
\item an explicit orientability, connectedness, isotopy, and label
criterion for all carried surfaces which are vertices of $\ES(E)$;
\item the simultaneous-disjointness relation \emph{within} each
boundary-slope layer, including surfaces carried by different minimal
continued-fraction expansions with the same slope;
\item the permutation of the Hatcher--Thurston coordinates induced by
every isometry of $E$, including the exceptional symmetries of
arithmetically symmetric two-bridge knots;
\item whether type and boundary slope already suffice to make every automorphism geometric and to read the finite invisible kernel from the coordinate action.
\end{enumerate}
In particular, identify natural infinite subclasses --- for example
slope-separated ones --- for which the whole problem can be settled
uniformly by Theorem~\ref{thm:visibility}.
\end{problem}

The genuinely new input is (B2).  By Lemma~\ref{lem:slope-layer}
different slopes automatically preclude disjointness, so the entire
combinatorial content of $\ES(E)$ for a two-bridge knot lies inside
the individual slope layers.  Items (B1) and (B3) supply,
respectively, the vertex census and the geometric permutation group
against which the combinatorial automorphism group must be compared.
Corollary~\ref{cor:fig8-final} solves the problem in the first
hyperbolic example precisely because every layer is a singleton.

\section{Connected sums: a visible twist}
\label{sec:composite}

The first decomposed phenomenon is an essential decomposing annulus.
The underlying surface theory is classical: Eisner proved that a
connected sum of two nonfibered knots has infinitely many pairwise
nonisotopic minimal spanning surfaces \cite{Eisner}; Banks identified
the full product structure and its integer winding coordinate
\cite{BanksConnectedSum}; and Johnson--Pelayo--Wilson place twisting
actions in the broader JSJ description of Kakimizu complexes
\cite{JohnsonPelayoWilson}.  We record the classical uniqueness of the
annulus and then spell out the natural action of its Dehn twist on
Banks's coordinate.  This is a concrete model of the
frontier-crossing issue isolated in
Section~\ref{subsec:frontier-problem}.

Let $K=K_1\mathbin\#K_2$.  A decomposing sphere $S\subset S^3$ meets
$K$ in two points, and $A=S\cap E(K)$ is a properly embedded essential
annulus whose boundary components are meridians; we call it the
\emph{standard decomposing annulus}.

\begin{theorem}[Classical annulus classification]
\label{thm:composite-annulus}
Let $K=K_1\mathbin\#K_2$ with $K_1,K_2$ nontrivial prime knots, not
necessarily distinct, and $E=E(K)$.  Then
\begin{enumerate}[label=(\arabic*),leftmargin=*]
\item every essential annulus in $E$ is isotopic to $A$;
\item $[A]$ is the unique annulus vertex of
$\ES^{\mathrm{per}}(E)$ and is fixed by every peripheral
automorphism;
\item every automorphism of the characteristic refinement
$\CES(E)$ also fixes $[A]$.
\end{enumerate}
\end{theorem}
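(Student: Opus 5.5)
Part (1) is the real content; (2) and (3) then follow formally from the definitions. For (1) I would use the JSJ structure of a composite exterior. By Schubert's prime decomposition \cite{Schubert} and the standard description of satellite JSJ decompositions \cite{Budney,JS}, $E(K_1\mathbin\#K_2)$ decomposes along two tori $T_1=\partial N(K_1)$, $T_2=\partial N(K_2)$ (suitably positioned) into $E(K_1)$, $E(K_2)$ and a composing space $C=P\times S^1$, where $P$ is a pair of pants. Here $\partial E$ is the outer boundary of $C$ and the meridian of $K$ is the $S^1$-fibre. The piece $C$ may be glued along further JSJ tori of the $E(K_i)$, but these lie inside the $E(K_i)$.

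\textbf{Reduction to $C$.} Let $A'$ be an essential annulus in $E$. Since the $T_i$ and $\partial E$ are incompressible, I first isotope $A'$ to meet $T_1\cup T_2$ minimally, so the intersection consists of essential circles. A standard innermost/outermost argument should then place $A'$ inside the characteristic submanifold, namely in the Seifert piece containing $\partial E$, which is $C$. Here I would use the enclosing property of the characteristic submanifold \cite{JS,Johannson}: essential annuli can be isotoped into $\Char(E)$, with $\partial A'\subset\partial E$. Any annulus components lying in $E(K_i)$ with boundary on $T_i$ would be essential annuli in $E(K_i)$ with boundary on $\partial E(K_i)$. I would rule out such pieces using primality and nontriviality of $K_i$, or, more cleanly, by applying the enclosing theorem directly.

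\textbf{Annuli in $C$.} Inside $C=P\times S^1$, an essential annulus with both boundary components on the outer torus is vertical or horizontal. A horizontal annulus would have to cover $P$, and this is impossible since $\chi(P)=-1\neq0$. So $A'$ is vertical, lying over an essential arc in $P$ with both endpoints on the outer boundary circle. The only such arc up to isotopy is the one separating the two inner boundary circles, and its preimage is exactly $A$. Hence $A'\simeq A$. I expect the main obstacle to be the enclosing step, in particular justifying that no essential annulus can weave through the companion pieces $E(K_i)$. If the direct argument becomes delicate, the fallback is to cite the classical result that the essential annuli of a knot exterior are the cabling/composing annuli \cite{Budney}.

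\textbf{Parts (2) and (3).} For (2), a peripheral automorphism preserves surface type by Definition~\ref{def:automorphisms}. It therefore permutes annulus vertices, and by (1) there is only one annulus vertex, $[A]$, so it is fixed. (Its slope is $\mu=\infty$, consistent with the sign rule.) For (3), automorphisms of $\CES(E)$ are by definition in particular peripheral automorphisms, so (3) follows from (2).
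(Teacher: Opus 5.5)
Your argument is correct, but it takes a different route from the paper's. The paper does not localize to a JSJ piece. It quotes the dichotomy that an essential annulus in a knot exterior is either a cabling annulus or meridional. A cabling annulus would make $K$ a torus or cable knot, which is impossible for a composite knot since cable knots are prime. A meridional annulus caps off across $N(K)$ to a decomposing sphere, and uniqueness then comes from Schubert's disjointness argument: primality of each factor rules out a second, nonparallel decomposing sphere.

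You instead use the enclosing property of $\Char(E)$ to push $A'$ into the root piece $C=P\times S^1$. There, uniqueness is simply the fact that a pair of pants has exactly one essential arc with both ends on a given boundary circle. Your version makes the uniqueness transparent and shows exactly where primality enters: in identifying the $\Char$-component at $\partial E$ as exactly $P\times S^1$. The root piece of each $E(K_i)$ is hyperbolic, or a torus-knot exterior or cable space whose fibre on $T_i$ is not the meridian of $K_i$, so the fibration of $C$ does not extend across $T_i$. For a composite factor it would extend, $P$ would gain punctures, and further annuli would appear. You should state this explicitly rather than leave it buried in the citation to Budney.

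The paper's route needs less machinery, with no characteristic-submanifold theory. The cost is a separate sphere-uniqueness step and the passage from isotopy of spheres in $(S^3,K)$ back to annuli in $E$.

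One caution about your fallback intersection argument. Primality alone does not exclude essential annuli in $E(K_i)$ with boundary on $T_i$, since torus and cable knots are prime and carry cabling annuli. What closes that route is that the subannuli of $A'$ in $C$ adjacent to $T_i$ are vertical. Hence the pieces in $E(K_i)$ have meridional boundary, and a meridional essential annulus is exactly what primality of $K_i$ forbids.
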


\begin{proof}
An essential annulus in a knot exterior is either a cabling annulus,
in which case the knot is a torus knot or a cable knot, or it is
meridional and extends across $N(K)$ to a decomposing sphere; see for
example \cite[Lemma~1.2]{KodaOzawa} and the JSJ discussion in
\cite{Budney}.  The knot $K_1\mathbin\#K_2$ is composite, hence
neither a torus knot nor a cable knot, since nontrivial cable knots
are prime \cite{Schubert,Rolfsen}.  So every essential annulus in $E$
extends to a decomposing sphere.

Let $S$ be the standard decomposing sphere and $S'$ any other.  The
disjointness step in the standard proof of Schubert's prime
decomposition theorem \cite{Schubert} isotopes $S'$ in $(S^3,K)$ to be
disjoint from $S$.  If the two spheres were not parallel in
$(S^3,K)$, then $S'$ would lie in one of the two complementary balls
cut off by $S$ and would give a nontrivial connected-sum decomposition
of the corresponding prime factor, a contradiction.  Hence $S'$ is
isotopic to $S$.  This does not require $K_1\ne K_2$: when
$K_1\cong K_2$ the possible exchange of the two equal factors does not
create a second isotopy class of the decomposing sphere.  Intersecting
the isotopy with $E$ proves (1); (2) and (3) follow since surface type
is an automorphism-invariant label.
\end{proof}

The exterior also contains the standard swallow--follow torus obtained
by tubing $A$ along one of the two annuli of
$\partial E\setminus\partial A$; for nontrivial factors this torus is
essential.  Thus Theorem~\ref{thm:composite-annulus} identifies only
the unique annulus vertex, not the whole complex.

\subsection{The twist on Banks's winding coordinate}

Choose a product neighbourhood
$N(A)\cong S^1\times[0,1]\times[-1,1]$ with
$A=S^1\times[0,1]\times\{0\}$ and
$N(A)\cap\partial E=S^1\times\{0,1\}\times[-1,1]$.  A positive Dehn
twist about $A$ is
\[
  t_A(z,s,u)=\bigl(e^{2\pi i\rho(u)}z,\,s,\,u\bigr),
  \qquad \rho(-1)=0,\ \rho(1)=1,
\]
extended by the identity outside $N(A)$.  The endpoint rotations are
both the identity, so this is a homeomorphism of $E$; on $\partial E$
the two induced twists occur along oppositely oriented meridians and
cancel, so $t_A|_{\partial E}$ is isotopic to the identity and
preserves every boundary slope.  Write $\tau_A=[t_A]$.  Reversing the
orientation convention replaces $\tau_A$ by its inverse.

\begin{theorem}[Banks's connected-sum theorem \cite{BanksConnectedSum}]
\label{thm:banks-connected-sum}
Let $L_1,L_2$ be non-split, nonfibered links in $S^3$ and
$L=L_1\mathbin\#L_2$.  Banks constructs an ordered product
triangulation and a simplicial isomorphism whose realization gives
\[
  |\operatorname{MS}(L)|\cong
  |\operatorname{MS}(L_1)|\times|\operatorname{MS}(L_2)|\times\mathbb R .
\]
On vertices, after choices of ordered representatives, this is a
bijection
\[
  \Psi:\operatorname{V}(\operatorname{MS}(L_1))\times
       \operatorname{V}(\operatorname{MS}(L_2))\times\mathbb Z
       \longrightarrow\operatorname{V}(\operatorname{MS}(L)),
\]
the integer recording the winding of the joining rectangle in the
composing region.  The analogous homeomorphism holds for all
incompressible spanning surfaces,
$|\operatorname{IS}(L)|\cong|\operatorname{IS}(L_1)|\times
|\operatorname{IS}(L_2)|\times\mathbb R$.
\end{theorem}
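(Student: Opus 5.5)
This statement is Banks's theorem, quoted from \cite{BanksConnectedSum}; the paper uses it as an input rather than proving it. What follows is therefore an outline of how I would reconstruct Banks's argument. The two inputs I would rely on are Kakimizu's characterisation of adjacency via lifts to the infinite cyclic cover, and the uniqueness of the decomposing sphere (Theorem~\ref{thm:composite-annulus}) together with its multi-component analogue for non-split links.

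\emph{Step 1: normal form.} Let $A=S\cap E(L)$, where $S$ is the decomposing sphere. Take a minimal-genus spanning surface $R$ and isotope it so that $R\cap A$ is a single essential arc $\alpha$ joining the two meridian components of $\partial A$.
\begin{itemize}
\item Trivial circles of $R\cap A$ are removed using incompressibility of $R$ and irreducibility of $E$.
\item Essential circles cannot occur. Such a circle would be a meridian bounding a subsurface of $R$, contradicting that $[R]$ pairs to $\pm1$ with the meridian (Lemma~\ref{lem:homology-binary}).
\item Parallel arcs are reduced by minimality of genus.
\end{itemize}
Cutting along $A$ then splits $R$ into minimal spanning surfaces $R_1,R_2$ for $L_1,L_2$, joined by a rectangle neighbourhood of $\alpha$. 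This gives the first two coordinates of $\Psi^{-1}$.

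\emph{Step 2: the integer coordinate.} Fix a reference arc $\alpha_0$ in $A$. The arc $\alpha$ is then determined up to isotopy in $A$ (rel the product structure) by how many times it winds around the $S^1$ factor relative to $\alpha_0$. I will need to check two things:
\begin{itemize}
\item The integer $n$ is an isotopy invariant of $R$ in $E$. Because the factors are nonfibered, the isotopy classes of $R_i$ in $E(L_i)$ are rigid enough that any isotopy of $R$ can be pushed to preserve $A$, so $n$ is well defined.
\item Every triple $(R_1,R_2,n)$ is realised, by band-summing $R_1$ and $R_2$ along an arc winding $n$ times.
\end{itemize}
Together these make $\Psi$ a bijection on vertices.

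\emph{Step 3: adjacency and triangulation.} Two vertices $\Psi(R_1,R_2,n)$ and $\Psi(R_1',R_2',n')$ are disjoint exactly when $R_i,R_i'$ are disjoint or equal in each factor and $|n-n'|\le1$. The constraint $|n-n'|\le 1$ comes from the requirement that the joining arcs in $A$ be disjoint, with the ordering supplied by the cyclic cover. This is exactly the vertex and simplex structure of a standard product (staircase) triangulation of $|\operatorname{MS}(L_1)|\times|\operatorname{MS}(L_2)|\times\mathbb R$, once the vertices of each factor complex are ordered by lifts to the infinite cyclic cover. The $\operatorname{IS}$ version follows verbatim, since minimal genus is used only in Step 1 to reduce to one arc, and incompressibility suffices there.

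\emph{Main obstacle.} I expect the hardest part to be the well-definedness and realisation argument in Step 2, which is also where nonfiberedness enters. For fibered factors the fibres sweep around the complement, so the winding coordinate collapses; showing it does not collapse in the nonfibered case requires a careful analysis of product regions between disjoint surfaces in the cyclic cover. I would defer that analysis to Banks's ordering lemma rather than reproduce it here.
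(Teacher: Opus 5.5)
The paper gives no proof of this statement; it imports it from Banks. The only feature used later is the label-preserving vertex map: fixed factor representatives joined by a rectangle of winding $n$. Your Steps~1--2 describe that map correctly in outline. Step~1 is in fact simpler than you make it: $\partial R$ meets each component of $\partial A$ once, so $R\cap A$ has exactly one arc, and an essential circle of $A$ would have to cross it.

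There is, however, a genuine error in Step~3. The criterion ``$R_i,R_i'$ disjoint or equal and $|n-n'|\le1$'' is false.
\begin{itemize}
\item \emph{Counterexample.} Take $L_1,L_2$ knots, so that $\operatorname{MS}(L)$ is flag by Propositions~\ref{prop:ES-flag} and~\ref{prop:kakimizu-subcomplex}. Let $\{a,a'\}$ be an edge of $\operatorname{MS}(L_1)$ (e.g.\ $L_1$ itself a connected sum of two nonfibred knots) and $b$ a vertex of $\operatorname{MS}(L_2)$. Your rule makes $\Psi(a,b,0),\Psi(a,b,1),\Psi(a',b,0),\Psi(a',b,1)$ pairwise adjacent, hence a $3$-simplex. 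But all four lie over $\{a,a'\}\times\{b\}$, i.e.\ in the $2$-dimensional set $|\{a,a'\}|\times\{b\}\times\mathbb R$ of the product. Reindexing $n\mapsto n+f(R_1,R_2)$ does not help.
\item \emph{What is missed.} Disjointness of the joining arcs in $A$ does not merely bound $|n-n'|$. Which value of $n'-n$ is realizable is dictated by the side on which $\partial R_i'$ lies relative to $\partial R_i$.
\item \emph{Correct rule.} Write a lift of $\Psi(R_1,R_2,n)$ as $t^{p}s_1(R_1)\cup\text{band}\cup t^{p+n}s_2(R_2)$ for base lifts $s_i$. Two vertices are adjacent iff some lifts satisfy $\tilde R\le\tilde R'\le t\tilde R$ \emph{simultaneously} in both factor covers. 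If $s_1(a)<s_1(a')<ts_1(a)$, this allows $n'\in\{n,n+1\}$ when $b'=b$; only $n'=n$ when $s_2(b)<s_2(b')<ts_2(b)$; and only $n'=n+1$ when $s_2(b')<s_2(b)<ts_2(b')$.
\item \emph{Consequence for the triangulation.} Over two distinct adjacent factor pairs, exactly one $n'$ occurs for each $n$. A standard staircase on $V(\operatorname{MS}(L_1))\times V(\operatorname{MS}(L_2))\times\mathbb Z$ would give two or none. So even the corrected rule is not ``exactly'' that triangulation in the winding coordinate. The cyclic cover gives instead an ordered product of $\operatorname{MS}(L_1)$ with the complex of all lifts of $L_2$-surfaces (simplices $y_0\le\dots\le y_k\le ty_0$). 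Identifying that complex with $|\operatorname{MS}(L_2)|\times\mathbb R$ is a further step your sketch omits.
\end{itemize}

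In Step~2, the stated reason for well-definedness is not the mechanism. Pushing isotopies to preserve $A$ does not make $n$ well defined. If $L_1$ were fibred with fibre $R_1$, the flow sweeping $R_1$ once around $E(L_1)$ can be chosen to preserve $A$ setwise, and it changes $n$ by one.

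Nonfiberedness enters because it forbids a lift $\tilde R_i$ from being isotopic to $t^k\tilde R_i$ with $k\ne0$. Otherwise the region between them would be a product, and $E(L_i)$ would fibre with fibre $R_i$. The robust definition of $n$ is the relative shift between the two factor lifts contained in one lift of $R$. Invariance and injectivity of $\Psi$ then need two things: this rigidity of lifts, and the fact that the isotopy class of $R$ determines those of $R_1,R_2$. Your final paragraph gestures at the first, but both are deferred, so Step~2 is an outline of Banks's argument rather than a proof.
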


For knots the hypotheses reduce exactly to the nonfibered hypotheses
used below: every knot is non-split, and a nontrivial prime knot is an
admissible factor.  The next statement is the action calculation.  We
have not found its displayed formula stated verbatim in the cited
literature, but it is an immediate naturality consequence of the
winding-coordinate construction and should not be read as a new
classification theorem.

\begin{proposition}[Translation by annular spinning; $=$ Theorem~\ref{thmC}]
\label{prop:kakimizu-spinning}
Let $K=K_1\mathbin\#K_2$ with $K_1,K_2$ nontrivial nonfibered prime
knots.  In the vertex coordinates of
Theorem~\ref{thm:banks-connected-sum}, with the sign convention above,
\[
  \tau_A\cdot\Psi(R_1,R_2,n)=\Psi(R_1,R_2,n+1).
\]
Consequently $\langle\tau_A\rangle\cong\mathbb Z$ acts freely on the
vertex set of $\operatorname{MS}(K)$.  By
Proposition~\ref{prop:kakimizu-subcomplex}, these are also vertices of
$\ES(E)$, and therefore
\[
  \langle\tau_A\rangle\cap
  \ker\bigl(\Mod^{\pm}(E)\to\Aut(\operatorname{MS}(K))\bigr)=1,
  \qquad
  \langle\tau_A\rangle\cap\mathcal K_{\ES}(E)=1 .
\]
In particular annular spinning is not an invisible symmetry of the
full essential-surface complex.
\end{proposition}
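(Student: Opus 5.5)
I will prove the translation formula directly from the way Banks's bijection $\Psi$ is built, and then deduce the freeness and kernel statements formally.

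First I recall the construction. Fix the standard decomposing annulus $A$ (unique by Theorem~\ref{thm:composite-annulus}). It cuts $E$ into pieces $E_1',E_2'$, homeomorphic to the exteriors $E(K_1),E(K_2)$ with the meridional annuli $A^{\pm}$ marked on their boundaries. A minimal-genus Seifert surface $R$ of $K$ can be isotoped to meet $A$ in a single essential arc. This is standard: the intersection with $A$ is made essential arcs by incompressibility and $\partial$-incompressibility, and there is exactly one arc because $R$ meets each meridian once. Cutting $R$ along that arc gives surfaces $R_1\subset E_1'$ and $R_2\subset E_2'$, which represent vertices of $\operatorname{MS}(K_i)$. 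Reassembling involves a joining rectangle $\alpha\times[-1,1]$ in the product region $N(A)$, and Banks's integer $n$ records how many times this rectangle winds in the $S^1$-direction of $N(A)\cong S^1\times[0,1]\times[-1,1]$, measured relative to fixed ordered base representatives. So $\Psi(R_1,R_2,n)$ is the surface whose pieces are $R_1,R_2$ and whose rectangle has winding $n$.

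Next I apply $t_A$. It is supported in $N(A)$, so it leaves $R_1$ and $R_2$ untouched outside the collar. Inside the collar it sends the rectangle $\{(e^{2\pi i\theta_0}, s,u)\}$ to $\{(e^{2\pi i(\theta_0+\rho(u))},s,u)\}$, which adds one full turn as $u$ runs from $-1$ to $1$. Hence $t_A(\Psi(R_1,R_2,n))$ has pieces $(R_1,R_2)$ and winding $n+1$. Two points need checking here. First, the image is again in the normal form used by Banks: it still meets $A$ in one arc, and the boundary behaviour is fine because $t_A|_{\partial E}$ is isotopic to the identity. Second, the sign matches the chosen orientation convention; reversing the convention gives $n-1$, consistent with the remark that $\tau_A$ is then replaced by its inverse. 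Because $\Psi$ is a bijection onto isotopy classes, the formula holds on vertices.

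\textbf{Main obstacle.} The delicate step is making sure that Banks's coordinate $n$ really is the geometric winding in $N(A)$, and is not renormalized by the choice of ordered representatives in a way that absorbs the twist. I would handle this by choosing the base representatives of $\operatorname{MS}(K_i)$ away from $N(A)$, so that the $\tau_A$-action commutes with the cutting map $R\mapsto(R_1,R_2)$. I would then use naturality: $\tau_A$ preserves $A$ and each $E_i'$ up to isotopy, so it acts as the identity on the factor coordinates.

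The remaining claims then follow formally. The map $n\mapsto n+k$ has no fixed points for $k\ne0$, so $\tau_A^k$ moves every vertex $\Psi(R_1,R_2,n)$ and $\langle\tau_A\rangle$ acts freely. In particular $\tau_A$ has infinite order, so $\langle\tau_A\rangle\cong\mathbb Z$, and no nontrivial power lies in the kernel of the action on $\operatorname{MS}(K)$. By the equivariant inclusion $\operatorname{MS}(K)\subseteq\ES(E)$ of Proposition~\ref{prop:kakimizu-subcomplex}, a power of $\tau_A$ acting trivially on $\ES(E)$ would act trivially on $\operatorname{MS}(K)$. Hence $\langle\tau_A\rangle\cap\mathcal K_{\ES}(E)=1$.
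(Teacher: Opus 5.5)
Your proposal is correct and follows essentially the same route as the paper. Both fix Banks's factor representatives $R_1,R_2$ outside the collar $N(A)$ and note that $t_A$ is the identity there and adds exactly one full turn to the joining rectangle, then deduce freeness from injectivity of $\Psi$ and the kernel statements from the equivariant inclusion $\operatorname{MS}(K)\subseteq\ES(E)$ (your recap of why a minimal-genus Seifert surface meets $A$ in a single arc is harmless but unnecessary, since $\Psi$ is already a bijection onto the vertex set).
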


\begin{proof}
Banks's vertex map joins fixed representatives in the two factor
exteriors by a rectangle winding $n$ times in the composing region
\cite[Definition~7.1 and Section~7]{BanksConnectedSum}.  We verify
naturality in the model neighbourhood fixed above.  The composing
region is a collar of the decomposing annulus, which we take to be
$N(A)\cong S^1\times[0,1]\times[-1,1]$, with the two factor exteriors
attached along $S^1\times[0,1]\times\{\pm1\}$.  A representative of
$\Psi(R_1,R_2,n)$ meets $N(A)$ in the joining rectangle, a spanning
band meeting each level annulus $S^1\times[0,1]\times\{u\}$ in a
single spanning arc, with total winding $n$ in the $S^1$ direction
measured from $u=-1$ to $u=1$ relative to the reference rectangle.
The homeomorphism $t_A$ is the identity outside $N(A)$, so it fixes
$R_1$ and $R_2$ pointwise; on $N(A)$ it rotates the level $u$ by
$2\pi\rho(u)$, so it carries a rectangle of winding $n$ to one of
winding $n+1$ relative to the same reference.  Iterating gives
$\tau_A^m\Psi(R_1,R_2,n)=\Psi(R_1,R_2,n+m)$, and injectivity of
Banks's vertex map shows this equals $\Psi(R_1,R_2,n)$ only for $m=0$.
\end{proof}

\begin{figure}[htb]
\centering
\begin{tikzpicture}[scale=1.0,every node/.style={font=\small}]
% the composing region as a cylinder, drawn flattened
\foreach \x/\lab in {0/{$n$}, 5.6/{$n+1$}}{
  \begin{scope}[xshift=\x cm]
    \fill[lightfill] (0,0) rectangle (4.0,2.2);
    \draw[thick] (0,0) rectangle (4.0,2.2);
    \node[below] at (2.0,-0.05) {$u=-1$ \ \ (factor $K_1$ side)};
    \node[above] at (2.0,2.25) {$u=+1$ \ \ (factor $K_2$ side)};
    \draw[greyish,densely dashed] (0,0)--(0,2.2);
    \draw[greyish,densely dashed] (4.0,0)--(4.0,2.2);
    \node[greyish,left]  at (-0.08,1.1) {$S^1$};
  \end{scope}
}
% winding n = 0 (straight) on the left
\draw[surfA,very thick] (1.4,0) -- (1.4,2.2);
\node[surfA] at (1.6,-1.05) {joining rectangle, winding $n$};
% winding n+1 on the right: goes once around
\draw[surfB,very thick] (5.6+1.4,0) .. controls (5.6+1.4,0.7) and (5.6+4.0,0.7) .. (5.6+4.0,1.1);
\draw[surfB,very thick] (5.6+0.0,1.1) .. controls (5.6+0.0,1.5) and (5.6+1.4,1.5) .. (5.6+1.4,2.2);
\node[surfB] at (5.6+2.0,-1.05) {winding $n+1$};
\draw[-{Stealth[length=6pt]},thick] (4.45,1.1) -- (5.35,1.1);
\node[above] at (4.9,1.15) {$\tau_A$};
\end{tikzpicture}
\caption{Proposition~\ref{prop:kakimizu-spinning}.  The Dehn twist
along the decomposing annulus is the identity on both factor
exteriors, and increases by one the winding of the joining rectangle
in the composing region.  Hence it is not invisible on the Kakimizu
layer, even though it fixes every surface lying in a single factor.
This is the smallest concrete instance of the frontier-crossing
phenomenon of Figure~\ref{fig:frontier}.}
\label{fig:winding}
\end{figure}

\begin{remark}[The winding coordinate as global compatibility data]
\label{rem:winding-globality}
Cutting a minimal-genus Seifert surface along $A$ leaves the two
factor coordinates unchanged; changing the gluing by $\tau_A^m$
changes only the relative winding integer.  Thus, for this family of
frontier-crossing surfaces, the integer is exactly the discrepancy
between two fixed local restrictions which must be controlled when
they are reglued.  This is an interpretation, not a further assertion:
no claim is made that the coordinate is intrinsically definable from
$\ES^{\mathrm{per}}(E)$.
\end{remark}

\begin{remark}[Scope]
\label{rem:composite-scope}
Theorem~\ref{thm:composite-annulus} verifies a concrete annular
fragment of intrinsic recognition;
Theorem~\ref{thm:banks-connected-sum} supplies the coordinate model;
and Proposition~\ref{prop:kakimizu-spinning} determines the action of
the annular-twist direction and shows that this infinite cyclic
subgroup is not invisible.  It does not compute the full kernel:
mapping classes coming from the two factor exteriors, factor-exchange
symmetries when $K_1\cong K_2$, and possible finite invisible
symmetries remain.  The nonfibered hypothesis is exactly the regime in
which Banks's model contains the extra $\mathbb Z$ coordinate; if one
factor is fibered, the Kakimizu complex is isomorphic to that of the
other factor and the argument does not determine whether $\tau_A$ is
invisible.  That is a separate, and possibly different, kernel
phenomenon.
\end{remark}

\section{Satellites, the JSJ decomposition, and the frontier problem}
\label{sec:satellite}

Sections~\ref{sec:torus-knots}--\ref{sec:composite} concerned
exteriors whose characteristic frontier is either empty or a single
annulus.  In general a knot exterior decomposes into pieces of
different geometric types, and this is where the three-dimensional
problem departs decisively from the curve-complex model.  This section
collects the classical input, defines the characteristic refinement
$\CES(E)$ used in the statements above, and explains --- in
pictures, with the formal details deferred to
Appendix~\ref{app:realization} --- why realizing a combinatorial
automorphism by a homeomorphism is not a local problem.

\subsection{Classical input}

\begin{theorem}[JSJ input for a knot exterior]
\label{thm:classical-jsj-input}
Let $K$ be a nontrivial knot and $E=E(K)$.  Then:
\begin{enumerate}[label=(\arabic*),leftmargin=*]
\item there is a finite minimal family
$\mathcal T_{\mathrm{JSJ}}(E)=T_1\cup\cdots\cup T_n$ of pairwise
disjoint, pairwise nonparallel essential tori, unique up to isotopy,
such that every component of $E\sslash\mathcal T_{\mathrm{JSJ}}(E)$ is
Seifert fibered or atoroidal;
\item the characteristic submanifold $\Char(E)$ is unique up to
isotopy, and, after deleting boundary-parallel and parallel
redundancies, its frontier $\Fr\Char(E)$ is a finite union of
essential annuli and tori; it contains the Seifert and $I$-bundle
parts through which essential annuli and tori are carried;
\item every self-homeomorphism of $E$ preserves
$\mathcal T_{\mathrm{JSJ}}(E)$, $\Char(E)$, and their incidence data
up to isotopy;
\item by geometrization, each non-Seifert JSJ piece has interior
carrying a complete finite-volume hyperbolic metric.
\end{enumerate}
\end{theorem}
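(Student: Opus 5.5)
This is a compilation of classical results, so I would assemble it from the literature rather than reprove anything, and check only that each clause holds in the setting of a knot exterior $E=E(K)$. The standing input is that for nontrivial $K$ the exterior $E$ is compact, orientable, irreducible, with incompressible torus boundary (Conventions). Hence $E$ is Haken and both Jaco--Shalen \cite{JS} and Johannson \cite{Johannson} apply.

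For (1), I would quote the torus decomposition theorem of Jaco--Shalen--Johannson. It gives a minimal family of disjoint, pairwise nonparallel essential tori, unique up to isotopy, such that every complementary piece is Seifert fibered or simple, i.e.\ atoroidal. Finiteness is the Kneser--Haken bound already used in Lemma~\ref{lem:finite-dim}. I would also note the standard minimality normalization: parallel copies and boundary-parallel tori are discarded, and a torus is dropped if its two sides are Seifert pieces whose fibrations match.

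For (2), I would take the characteristic submanifold of Jaco--Shalen/Johannson, with the boundary pattern induced by $\partial E$. Existence and uniqueness up to isotopy are their main theorem. The enclosing property says every essential annulus or torus, and more generally every essential Seifert or $I$-bundle map, is homotopic, hence by Waldhausen \cite{Waldhausen} isotopic, into $\Char(E)$. After deleting components of $\Fr\Char(E)$ that are boundary-parallel (for instance the collar torus in the hyperbolic case) or parallel to one another, each remaining frontier component is an essential annulus or torus.

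For (3), uniqueness up to isotopy in (1) and (2) is all that is needed. If $h$ is a homeomorphism, then $h(\mathcal T_{\mathrm{JSJ}})$ and $h(\Char(E))$ satisfy the same defining properties, so each is isotopic to the original. Composing $h$ with an ambient isotopy, we may assume $h$ preserves $\mathcal T_{\mathrm{JSJ}}$ and $\Char(E)$ setwise. It then permutes the pieces and tori, and so preserves the incidence graph together with its root at the piece containing $\partial E$.

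For (4), I would cite Thurston's hyperbolization for Haken manifolds \cite{Thurston}; full Perelman geometrization is unnecessary since every piece is Haken. A non-Seifert JSJ piece is irreducible, atoroidal and, after the JSJ normalization, anannular rel its torus boundary. So its interior carries a complete finite-volume hyperbolic metric.

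The main obstacle is the bookkeeping in (2). The JSJ tori and the frontier of the characteristic submanifold differ in general, because $\Char(E)$ may include $I$-bundle or Seifert neighbourhoods of essential annuli. One must therefore state the deletion rule carefully enough that the remaining frontier is canonically determined and still essential. I would handle this by fixing Johannson's convention: remove collar components of $\partial E$ and merge parallel frontier components. I would then observe that uniqueness up to isotopy survives this normalization, since the normalization is defined intrinsically.
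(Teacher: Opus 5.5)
Your proposal is correct and follows the same route as the paper. Both check that $E$ is compact, orientable, irreducible with incompressible torus boundary, cite Jaco--Shalen--Johannson for (1)--(2), deduce (3) from canonicity (uniqueness up to isotopy), and obtain (4) from geometrization of the atoroidal non-Seifert pieces; your explicit isotopy argument for (3) and the remark that Thurston's hyperbolization for Haken manifolds already suffices in (4) are accurate refinements of the paper's terse citation, not a different argument.
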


\begin{proof}
A nontrivial knot exterior is compact, orientable, irreducible, and
has incompressible torus boundary.  The assertions about the canonical
torus system and the characteristic submanifold are the
Jaco--Shalen--Johannson theory \cite{JS,Johannson,NeumannSwarup};
canonicality gives invariance under homeomorphisms.  The last
assertion is the geometrization consequence for atoroidal non-Seifert
pieces.  See also Budney's knot- and link-exterior formulation
\cite{Budney}.
\end{proof}

\begin{definition}[Rooted JSJ tree]
\label{def:rooted-jsj-tree}
The \emph{rooted JSJ tree} $\Gamma_{\mathrm{JSJ}}(E)$ is the dual
graph of $E\sslash\mathcal T_{\mathrm{JSJ}}(E)$: vertices are JSJ
pieces, edges are JSJ tori, and the root is the vertex of the piece
meeting $\partial E$.  Each vertex is labelled by its Seifert or
hyperbolic type, and the incidence of its boundary tori is retained.
\end{definition}

\begin{lemma}
\label{lem:jsj-tree}
$\Gamma_{\mathrm{JSJ}}(E)$ is a finite tree, and the natural action
induces
\[
  \Psi_{\mathrm{JSJ}}:\Mod^{\pm}(E)\longrightarrow
  \Aut_{\mathrm{root,lab}}\Gamma_{\mathrm{JSJ}}(E).
\]
\end{lemma}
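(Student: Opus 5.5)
The plan has two parts: the tree property, then the homomorphism. First I show that $\Gamma_{\mathrm{JSJ}}(E)$ is a finite tree. Finiteness is immediate from Theorem~\ref{thm:classical-jsj-input}(1), since the JSJ family $\mathcal T_{\mathrm{JSJ}}(E)=T_1\cup\cdots\cup T_n$ is finite. Hence the dual graph has $n$ edges and finitely many vertices.

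For the tree property I use homology of $S^3$. Each torus $T_i\subset E\subset S^3$ separates $S^3$, because $H_1(S^3)=0$ and so every closed embedded surface is null-homologous mod $2$. Consequently $T_i$ separates $E$: any loop in $E$ has intersection number $0$ mod $2$ with $T_i$. Now suppose the dual graph contained a cycle. An edge lying on that cycle could be crossed exactly once by a closed loop in $E$, contradicting separation. The graph is also connected, because $E$ is connected. So $\Gamma_{\mathrm{JSJ}}(E)$ is a finite tree. The root is well defined, because $\partial E$ is a single torus and therefore lies in exactly one piece.

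Next I construct $\Psi_{\mathrm{JSJ}}$. Given $h\in\Homeo(E)$, Theorem~\ref{thm:classical-jsj-input}(3) lets me isotope $h$ so that $h(\mathcal T_{\mathrm{JSJ}}(E))=\mathcal T_{\mathrm{JSJ}}(E)$. The isotoped map permutes the tori and the complementary pieces and preserves incidence, so it induces a graph automorphism. This automorphism fixes the root, since $h(\partial E)=\partial E$ and the root is the unique piece containing $\partial E$. It preserves labels, because a homeomorphism carries Seifert fibered pieces to Seifert fibered pieces and hyperbolic pieces to hyperbolic pieces. The Seifert/hyperbolic dichotomy for a piece is a topological invariant, by Theorem~\ref{thm:classical-jsj-input}(4) together with the fact that finite-volume hyperbolic manifolds are not Seifert fibered. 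Finally, boundary-torus incidence is transported by $h$.

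The main obstacle is \emph{well-definedness}: the induced automorphism must not depend on the isotopy chosen, nor on the representative of the mapping class. I handle this with the uniqueness clause in Theorem~\ref{thm:classical-jsj-input}(1). Suppose two maps $h_0,h_1$ are isotopic and both preserve $\mathcal T_{\mathrm{JSJ}}(E)$. Then $h_1^{-1}h_0$ is isotopic to the identity and preserves the family. Each torus $T_i$ is therefore sent to a torus isotopic to $T_i$. Distinct JSJ tori are pairwise nonparallel, and isotopic disjoint incompressible tori in an irreducible manifold are parallel (a Waldhausen-type fact). Hence each $T_i$ is sent to itself. Every piece is then also fixed, because a piece is determined by its boundary tori in the tree: the tree structure pins down adjacent pieces, and the root handles the case of the piece containing $\partial E$. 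So the induced automorphism is the identity.

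With well-definedness established, the homomorphism property follows at once from functoriality of composition of representatives that preserve the family. This produces the homomorphism $\Psi_{\mathrm{JSJ}}:\Mod^{\pm}(E)\to\Aut_{\mathrm{root,lab}}\Gamma_{\mathrm{JSJ}}(E)$.
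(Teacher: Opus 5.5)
Your proof is correct and follows the paper's route. Separation of every torus gives the tree property: you derive separation from $H_1(S^3)=0$ and the mod-$2$ intersection pairing, where the paper cites $H_2(E;\mathbb Z)=0$ directly. Theorem~\ref{thm:classical-jsj-input}(3) then gives the action, with the root preserved as the unique piece meeting $\partial E$.

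Your extra checks are sound, and the paper's two-line proof leaves them implicit. These are label preservation, and well-definedness on isotopy classes via Waldhausen's parallelism of disjoint isotopic incompressible tori together with the fixed root.
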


\begin{proof}
Since $H_2(E;\mathbb Z)=0$, every embedded torus in $E$ separates, so
the dual graph of a finite collection of separating tori is a tree.
Theorem~\ref{thm:classical-jsj-input}(3) gives the action, and the
root is preserved as the unique piece incident to $\partial E$.
\end{proof}

\begin{figure}[htb]
\centering
\begin{tikzpicture}[scale=1.0,every node/.style={font=\small}]
% nested picture of a satellite exterior
\begin{scope}
  \draw[thick,greyish] (0,0) ellipse (2.6 and 1.6);
  \node[greyish,above] at (0,1.66) {$\partial E$};
  \draw[surfA,very thick] (0,0) ellipse (1.9 and 1.15);
  \node[surfA] at (-1.52,0.98) {$T_1$};
  \draw[surfB,very thick] (-0.75,0) ellipse (0.75 and 0.55);
  \node[surfB] at (-0.75,0.72) {$T_2$};
  \draw[surfC,very thick] (0.9,0) ellipse (0.62 and 0.45);
  \node[surfC] at (0.9,0.63) {$T_3$};
  \node[greyish] at (0,-1.38) {\small $P_0$ (root piece)};
  \node[greyish] at (-0.75,0) {\small $P_2$};
  \node[greyish] at (0.9,0) {\small $P_3$};
  \node[greyish] at (0,-0.75) {\small $P_1$};
\end{scope}
% the tree
\begin{scope}[xshift=6.5cm,yshift=-0.6cm]
  \coordinate (r) at (0,2.0);
  \coordinate (p1) at (0,1.0);
  \coordinate (p2) at (-0.9,0.0);
  \coordinate (p3) at (0.9,0.0);
  \draw[thick] (r)--(p1); \draw[thick] (p1)--(p2); \draw[thick] (p1)--(p3);
  \fill (r) circle (2.4pt);  \node[right=3pt] at (r) {$P_0$ \ (root)};
  \fill[surfA] (p1) circle (2.4pt); \node[right=3pt,surfA] at (p1) {$P_1$};
  \fill[surfB] (p2) circle (2.4pt); \node[left=3pt,surfB] at (p2) {$P_2$};
  \fill[surfC] (p3) circle (2.4pt); \node[right=3pt,surfC] at (p3) {$P_3$};
  \node[surfA,left] at (-0.1,1.5) {$T_1$};
  \node[surfB,left] at (-0.55,0.5) {$T_2$};
  \node[surfC,right] at (0.55,0.5) {$T_3$};
\end{scope}
\end{tikzpicture}
\caption{A satellite exterior and its rooted JSJ tree.  The root is
the piece meeting $\partial E$, so it is preserved by every mapping
class.  The tree alone is not a complete invariant: Budney's
companionship graph adds vertex labels coming from companion links and
the peripheral data needed for gluing \cite{Budney}.}
\label{fig:jsjtree}
\end{figure}

\begin{remark}[Companionship data and splicing]
\label{rem:companionship-tree}
The abstract rooted tree is not a complete knot invariant.  Budney
enriches it by vertex labels coming from companion links and by the
peripheral data needed for gluing; the resulting companionship graph
is a finite acyclic labelled graph giving a bijective encoding of
links in $S^3$, naturally rooted for knots.  In this language cabling,
connected sum, Whitehead doubling and general satellite constructions
are instances of splicing \cite{Budney}.  An intrinsic recognition
theorem must therefore recover not merely an unlabelled tree but the
piece labels and the peripheral attachment data.
\end{remark}

\begin{remark}[Group-theoretic and algorithmic shadows]
\label{rem:jsj-group-algorithm}
Van Kampen's theorem expresses $\pi_1E$ as the fundamental group of
the JSJ graph of groups with edge groups $\mathbb Z^2$, and
Scott--Swarup's canonical splittings over $\mathbb Z$ and
$\mathbb Z^2$ recover a closely related group-theoretic structure
\cite{ScottSwarup}; the peripheral meridian is still needed to return
from the group to the knot.  Algorithmically, normal-surface methods
compute prime and JSJ decompositions from a triangulation
\cite{JacoTollefson}.  This gives an external computability benchmark,
but does not solve the intrinsic problem of recognizing the same
structure from $\ES(E)$ alone.
\end{remark}

\begin{remark}[The Bonahon--Siebenmann layer]
\label{rem:bonahon-siebenmann-distinction}
The Bonahon--Siebenmann characteristic toric splitting is formulated
for the orbifold pair $(S^3,K)$
\cite{BonahonSiebenmannToric,BonahonSiebenmannKnots}.  A Conway sphere
becomes the Euclidean $2$-orbifold $S^2(2,2,2,2)$ and appears in the
exterior as an essential four-punctured sphere; it is therefore not an
edge of the ordinary torus JSJ tree.  This is a related
equivariant/orbifold layer which may be added to a hierarchy-level
object, but it should not be conflated with
$\mathcal T_{\mathrm{JSJ}}(E)$.
\end{remark}

\subsection{What the complex already recognizes}

One classical piece of the recognition problem can be read off from the
surface-type component of the peripheral data with no extra input.
Following Neumann--Swarup,
call an essential annulus or torus $C\subset E$ \emph{W-canonical} if
every essential annulus or torus in $E$ can be isotoped off $C$
\cite[Definition~2.1]{NeumannSwarup}.  Call a vertex $v$ of
$\ES^{\mathrm{per}}(E)$ \emph{annulus--torus universal} if its type
is annulus or torus and it is adjacent to every other annulus or torus
vertex.

\begin{proposition}[Recognition of the W-canonical subsystem]
\label{prop:W-canonical-recognition}
Let $E$ be a nontrivial knot exterior.
\begin{enumerate}[label=(\arabic*),leftmargin=*]
\item A vertex of $\ES^{\mathrm{per}}(E)$ is annulus--torus universal
if and only if it is represented by a W-canonical annulus or torus.
\item The set of W-canonical vertices is intrinsic in
$\ES^{\mathrm{per}}(E)$ and in every richer refinement; hence it
is preserved by every compatible automorphism.
\item The W-canonical vertices span a single finite simplex, and a
simultaneously disjoint choice of representatives is a W-system in the
sense of Neumann--Swarup, determining the W-decomposition up to
isotopy.
\end{enumerate}
\end{proposition}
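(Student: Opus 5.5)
The plan is to translate the definition of W-canonical directly into the adjacency language of $\ES(E)$, using only the surface-type label of $\ES^{\mathrm{per}}(E)$ together with the flag property (Proposition~\ref{prop:ES-flag}) and the Neumann--Swarup theory already invoked for it.

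For (1), fix an annulus or torus vertex $v=[C]$. By Definition~\ref{def:ES}, adjacency of $v$ with another vertex $w=[C']$ means that $C$ and $C'$ admit disjoint representatives, that is, $C'$ can be isotoped off $C$. If $C$ is W-canonical, every essential annulus or torus can be isotoped off $C$, so $v$ is adjacent to every other annulus or torus vertex. Conversely, if $v$ is adjacent to every other annulus or torus vertex, then every essential annulus or torus not isotopic to $C$ can be isotoped off $C$. The remaining case is $C$ itself. A torus can be pushed off itself along a product collar. An essential annulus $C$ can likewise be pushed off itself, since it is two-sided and its parallel copy is disjoint. Hence $C$ is W-canonical.

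One point needs care here. The W-canonical condition of \cite[Definition~2.1]{NeumannSwarup} is phrased for the annuli and tori themselves, not for isotopy classes. Since isotopy preserves disjointness up to isotopy, the two phrasings agree. I would remark on this rather than belabour it.

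For (2), annulus--torus universality is defined purely from the simplicial structure and the surface-type label. Every automorphism of $\ES^{\mathrm{per}}(E)$ preserves surface type by Definition~\ref{def:automorphisms}, and every automorphism at a richer level forgets to one of $\ES^{\mathrm{per}}(E)$. Therefore such automorphisms permute the annulus--torus universal vertices, and by (1) these are exactly the W-canonical vertices.

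For (3), first note that any two W-canonical vertices are adjacent by universality. The flag property of Proposition~\ref{prop:ES-flag} then yields simultaneously disjoint representatives for any finite subfamily. Finiteness follows from Lemma~\ref{lem:finite-dim}: pairwise disjoint, non-parallel essential surfaces number at most $h(E)$. Consequently the W-canonical vertices are finitely many, and the whole set spans one simplex.

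The main obstacle is the final identification with the W-system. Neumann--Swarup define the W-system as a maximal family of disjoint, non-parallel W-canonical annuli and tori, and show it is unique up to isotopy. I would therefore cite their result that any such maximal disjoint family is a W-system \cite{NeumannSwarup}. Our simplex contains every W-canonical class, so it is automatically maximal. The subtle point to check is that their definition of W-system does not discard boundary-parallel or mutually parallel pieces differently from our convention. Both exclude boundary-parallel surfaces, and vertices are taken up to isotopy, so the two notions should coincide.
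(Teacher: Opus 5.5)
Your proposal is correct and follows the paper's proof essentially step by step: (1) as a direct translation of the W-canonical condition into the edge relation, (2) from the invariance of type labels and adjacency under compatible automorphisms, and (3) via pairwise adjacency, the flag property, Kneser--Haken finiteness, maximality, and Neumann--Swarup's uniqueness of the W-system. Your extra care with the case of $C$ itself and with passing from isotopy classes to a fixed representative fills in details the paper leaves implicit.
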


\begin{proof}
(1) is the translation of the definition into the edge relation, an
edge meaning that the two classes admit disjoint representatives.  (2)
holds because the type labels and adjacency are preserved by
label-compatible automorphisms.  For (3), any two W-canonical vertices
are adjacent, so by Proposition~\ref{prop:ES-flag} every finite
collection spans a simplex; since $\ES(E)$ is finite-dimensional
(Lemma~\ref{lem:finite-dim}) there are only finitely many, and they
span a single simplex.  Distinct vertices have nonparallel
representatives, so the resulting collection is the maximal pairwise
nonparallel canonical system defining the W-decomposition, unique up
to isotopy by Neumann--Swarup's Lemmas~2.2 and~2.3
\cite{NeumannSwarup}.
\end{proof}

\begin{remark}[What remains]
\label{rem:W-versus-characteristic}
Proposition~\ref{prop:W-canonical-recognition} extracts a classical
part of the recognition problem from the peripheral refinement.  The
W-decomposition is related to, but must not be conflated with, the
ordinary torus JSJ decomposition or the full characteristic
submanifold.  The proposition does not identify the characteristic
pieces, their Seifert or $I$-bundle structures, the rooted incidence
data, or the peripheral attaching maps; nor does it recognize annulus
and torus types in the bare complex.  These are the remaining
contents of the intrinsic reconstruction problem.
\end{remark}

\subsection{The characteristic refinement}

\begin{definition}[Minimal position and characteristic support]
\label{def:support}
A properly embedded surface $F\subset E$ is in \emph{minimal
position} with respect to $\Fr\Char(E)$ if it is transverse to
$\Fr\Char(E)$ and the number of components of $F\cap\Fr\Char(E)$ is
minimal in the isotopy class of $F$.  The \emph{characteristic
support} $\supp_{\Char}(F)$ is the collection of pieces of
$E\sslash\Fr\Char(E)$ met by \emph{every} minimal-position
representative of $[F]$, considered up to isotopy of the
characteristic decomposition.
\end{definition}

\begin{remark}
\label{rem:support-well-defined}
Minimality already forces every component of $F\cap\Fr\Char(E)$ to be
essential in both $F$ and the frontier component containing it, by an
innermost-disk or outermost-arc argument using irreducibility of $E$
and incompressibility of $F$ and of the frontier surfaces.  The
universal quantifier in Definition~\ref{def:support} makes the support
manifestly isotopy-invariant without requiring a uniqueness statement
for minimal-position representatives.  If $F$ is isotopic to a
frontier component, its representatives can be pushed to either side,
so $\supp_{\Char}(F)=\varnothing$; this is consistent with, and
explains, the separate frontier datum below.
\end{remark}

\begin{definition}[Characteristic refinement]
\label{def:CES}
The \emph{characteristic refinement} $\CES(E)$ is
$\ES^{\mathrm{per}}(E)$ together with the following characteristic data:
\begin{enumerate}[label=(\arabic*),leftmargin=*]
\item labels for vertices represented by frontier annuli and tori of
$\Char(E)$, with the ordinary JSJ tori distinguished;
\item the characteristic support $\supp_{\Char}(F)$ of each vertex;
\item for each vertex, the set of boundary patterns induced on the
frontier components by its minimal-position representatives, up to
pattern isotopy and without twist coordinates;
\item the rooted labelled JSJ tree $\Gamma_{\mathrm{JSJ}}(E)$ together
with the incidence of characteristic pieces and frontier components.
\end{enumerate}
The boundary-pattern datum in (3) is quantified over minimal-position
representatives for the
reason given in Remark~\ref{rem:support-well-defined}; in practice one
expects a single pattern-isotopy class, but nothing below depends on
that.  The rooted-tree datum in (4) records only the ordinary
JSJ/characteristic
incidence, not the distinct Bonahon--Siebenmann layer of
Remark~\ref{rem:bonahon-siebenmann-distinction}.
\end{definition}

\begin{warning}
$\CES(E)$ is not claimed to be intrinsic to $\ES(E)$: it contains
characteristic data by fiat.  The intrinsic-recognition problem asks
whether they are forced by a more economical object.
\end{warning}

\subsection{Why realization is not a local problem}
\label{subsec:frontier-problem}

Suppose an automorphism $\psi$ of $\CES(E)$ has been shown to
preserve the characteristic frontier and to be realized, on each
piece, by a patterned homeomorphism.  Does it follow that $\psi$ is
induced by a homeomorphism of $E$?  Not automatically, and the
obstruction is easy to see (Figure~\ref{fig:frontier}).

Cut $E$ along $\Fr\Char(E)$ into pieces $P_1,\dots,P_n$.  Piece
homeomorphisms $h_i$ can always be adjusted in collars so as to glue,
\emph{provided} their boundary restrictions agree on each frontier
component; the discrepancy is measured by a pattern-isotopy class
$[\delta_Q]$ for each frontier component $Q$
(Definition~\ref{def:pg}(G4)).  Even when all discrepancies vanish and
a homeomorphism $h$ is obtained, $h$ is only known to agree with
$\psi$ on vertices whose support lies in a single piece: composing $h$
with a Dehn twist supported in a collar of a frontier annulus or torus
changes its action precisely on the vertices represented by
frontier-crossing surfaces, and fixes every single-piece vertex.  We
call this the \emph{frontier-crossing compatibility problem}; it is
condition (G5) of Definition~\ref{def:pg}.

\begin{keybox}
This is the essential new difficulty in dimension three, with no
analogue in the curve-complex setting.  There, a homeomorphism
realizing a combinatorial automorphism on a pants decomposition
already realizes it, because there is nothing left to twist by that
acts trivially on the visible data.  In a knot exterior there is:
Proposition~\ref{prop:kakimizu-spinning} exhibits exactly such a
twist, and shows that its effect is visible only on surfaces crossing
the frontier.
\end{keybox}

\begin{figure}[htb]
\centering
\begin{tikzpicture}[scale=1.0,every node/.style={font=\small}]
% two pieces glued along frontier annulus Q
\fill[surfA,opacity=0.10] (-3.1,-1.35) rectangle (0,1.35);
\fill[surfC,opacity=0.10] (0,-1.35) rectangle (3.1,1.35);
\draw[thick] (-3.1,-1.35) rectangle (3.1,1.35);
\draw[surfB,very thick] (0,-1.35) -- (0,1.35);
\node[surfB,above] at (0,1.4) {$Q\subset\Fr\Char(E)$};
\node[surfA] at (-2.55,-1.05) {$P_i$};
\node[surfC] at (2.65,-1.05) {$P_j$};
% a surface living in one piece
\draw[surfA,very thick] (-2.35,0) ellipse (0.5 and 0.75);
\node[surfA] at (-2.35,0) {$F_1$};
% a surface crossing
\draw[greyish,very thick] (-1.15,0.95) .. controls (-0.35,0.55) and (0.35,-0.55) .. (1.15,-0.95);
\node[greyish] at (1.62,-0.6) {$F_2$};
% collar and twist
\draw[greyish,densely dashed] (-0.42,-1.35) -- (-0.42,1.35);
\draw[greyish,densely dashed] (0.42,-1.35) -- (0.42,1.35);
\draw[-{Stealth[length=5pt]},thick,greyish] (-0.42,-1.62) -- (0.42,-1.62);
\node[greyish,below] at (0,-1.68) {collar of $Q$: twist here};
% conclusion
\node[align=left,anchor=west] at (3.6,0.55)
  {twisting in the collar:};
\node[align=left,anchor=west,surfA] at (3.6,0.05)
  {$[F_1]$ unchanged};
\node[align=left,anchor=west,greyish] at (3.6,-0.45)
  {$[F_2]$ moved};
\end{tikzpicture}
\caption{The frontier-crossing compatibility problem.  A Dehn twist
supported in a collar of a frontier component $Q$ fixes every vertex
supported in a single piece, but moves vertices represented by
surfaces crossing $Q$.  Hence realizing an automorphism piece by piece
does not suffice: a global condition, (G5) of
Definition~\ref{def:pg}, must be imposed or derived.  Compare
Figure~\ref{fig:winding}, which is this picture with $Q$ the
decomposing annulus of a connected sum.}
\label{fig:frontier}
\end{figure}

\begin{warning}
\label{warn:no-shortcut}
One cannot simply assert that a mapping class acting trivially on
essential-surface data is isotopic to the identity modulo frontier
twists.  Such a statement is itself a three-dimensional Ivanov-type
rigidity assertion, and it is false verbatim: for a torus-knot
exterior the strong inversion acts trivially on every object
considered here (Corollary~\ref{cor:torus-final}), and for the
figure-eight knot an entire Klein four group does
(Corollary~\ref{cor:fig8-final}), while no frontier exists at all.  In
this paper such statements are treated as problems or hypotheses, not
as inputs hidden inside a proof.
\end{warning}

\begin{problem}[Kernel problem]
\label{prob:kernel}
Determine $\mathcal K_X(E)$ for natural choices of $X$,
especially $X=\ES(E)$, $\ES^{\mathrm{per}}(E)$, $\CES(E)$.  In
particular decide when it is trivial, when it is generated by explicit
twists along annuli or tori invisible to $X$, and when it also
contains finite symmetries acting trivially on all essential-surface
data.  For hyperbolic knots the qualitative answer is unconditional by
Corollary~\ref{cor:hyperbolic-kernel} --- the kernel is a finite group
of invisible isometries, and the content is to say which --- so the
twist part of the problem concerns decomposed nonhyperbolic exteriors,
including satellite and composite knots.
\end{problem}

\section{Toward rigidity: three tasks}
\label{sec:reduction}

The examples suggest a three-dimensional analogue of Ivanov rigidity,
but they also show why it cannot be a single statement.  For a chosen
essential-surface object $X$ --- usually one of $\ES(E)$,
$\ES^{\mathrm{per}}(E)$, or $\CES(E)$ --- three tasks must be kept
separate.

\begin{keybox}
\begin{enumerate}[label=(A\arabic*),leftmargin=*]
\item \emph{Recognition}: recover the characteristic frontier, the
pieces, and the relevant peripheral data intrinsically from $X$.
\item \emph{Realization}: decide whether a combinatorial automorphism
is induced by compatible homeomorphisms of the pieces and hence by a
global homeomorphism of $E$.
\item \emph{Kernel}: determine the mapping classes acting trivially on
$X$.
\end{enumerate}
\end{keybox}

The second task is the direct analogue of the hard
``automorphisms are geometric'' step on surfaces, with an additional
three-dimensional difficulty: piecewise realizations must agree across
the characteristic frontier.  Figure~\ref{fig:frontier} shows the
obstruction, and Appendix~\ref{app:realization} records a precise set
of compatibility data for readers who need it.

\begin{figure}[htb]
\centering
\begin{tikzpicture}[
  every node/.style={font=\small},
  box/.style={draw=greyish,rounded corners=2pt,inner sep=5pt,align=center,
              minimum height=9mm},
  hi/.style={box,draw=surfA!70!black,fill=lightfill},
  ar/.style={-{Stealth[length=4pt]},thick,greyish}]
\node[box,text width=34mm] (a1) at (0,0)
  {(A1) recognition\\ \emph{what is the}\\ \emph{decomposition?}};
\node[box,text width=34mm] (a2) at (4.6,0)
  {(A2) realization\\ \emph{which symmetries}\\ \emph{are geometric?}};
\node[box,text width=34mm] (a3) at (9.2,0)
  {(A3) kernel\\ \emph{what is}\\ \emph{invisible?}};
\node[hi] (c) at (4.6,-2.5)
  {$\Mod^{\pm}(E)/\mathcal K_X(E)\ \cong\ \Image\Phi_X\ \le\ \Aut(X)$};
\draw[ar] (a1) -- (a2);
\draw[ar] (a2) -- (c);
\draw[ar] (a3) -- (c);
\draw[ar] (a1) to[out=-90,in=180] (c);
\end{tikzpicture}
\caption{The three tasks.  Recognition supplies the geometric
structure to be realized, realization determines the image, and kernel
determination tells what the action loses.  If realization is
surjective, the last inclusion becomes equality by the first
isomorphism theorem.}
\label{fig:packages}
\end{figure}

\begin{remark}[What is formal and what is not]
\label{rem:formal}
Once realization is surjective and the kernel is known, the
isomorphism $\Mod^{\pm}(E)/\mathcal K_X(E)\cong\Aut(X)$ is merely the
first isomorphism theorem.  The mathematical content lies in proving
recognition, global realization, and kernel identification.  In
particular, a mapping class which fixes every essential-surface vertex
cannot simply be declared trivial by Waldhausen--Johannson rigidity;
one must first control its induced action on $\pi_1E$ and the
characteristic structure.  See \cite{Waldhausen,Johannson,
KalliongisMcCullough} for the relevant background.
\end{remark}

At the opposite extreme one may record an entire marked Haken
hierarchy, including terminal patterned balls and strictly compatible
gluing maps.  Appendix~\ref{app:atlas} defines this deliberately
over-marked \emph{marked hierarchy atlas}.  By construction an atlas
isomorphism can be reassembled into a homeomorphism of the exterior.
The point of the benchmark is only to calibrate sufficiency: the
research problem is to determine how much of that information is
already forced by the much smaller objects $\ES(E)$,
$\ES^{\mathrm{per}}(E)$, or $\CES(E)$.

\section{Problems, graded}
\label{sec:problems}

The following list is meant to be usable.  Problems marked
$\bigstar$ require only the material of
Sections~\ref{sec:firstlook}--\ref{sec:visibility} together with a
surface census, and several of them are suitable as a first research
project; $\bigstar\bigstar$ requires characteristic-submanifold
theory; $\bigstar\bigstar\bigstar$ is a genuine research programme.
Appendix~\ref{app:recipe} gives the computational recipe needed for
the first group.

\subsection{\texorpdfstring{Entry level ($\bigstar$)}{Entry level (*)}}

\begin{problem}[Census and complex for small knots]
\label{prob:census}
For each knot with at most nine crossings, decide whether it is
slope-separated, and when it is, write down the peripheral slope--type list
of Remark~\ref{rem:invariant}.  By Theorem~\ref{thm:visibility} this determines the image and the kernel of the actions on the bare and peripheral levels at once, with no mapping class group computation.  For
two-bridge knots the census is Hatcher--Thurston's; see
Appendix~\ref{app:recipe}.  For the remaining knots, boundary-slope
tables and normal-surface software \cite{JacoTollefson} provide the
input.
\end{problem}

\begin{problem}[Question~\ref{q:chiral-symmetric}, restated]
\label{prob:chiral-symmetric}
Find a slope-separated chiral knot whose boundary-slope set is
symmetric about the origin, or prove that none exists in a natural
family.  By Corollary~\ref{cor:surjectivity} this is exactly the question of whether sign-symmetric peripheral data can admit a nongeometric automorphism.
\end{problem}

\begin{problem}[Which layers are singletons?]
\label{prob:singleton-layers}
Give a criterion, for a family of knots, under which each boundary
slope carries exactly one essential surface.  By
Remark~\ref{rem:slope-sep-rare} this fails for most two-bridge knots,
so the interesting question is to describe the exceptions.  A weaker
and probably more tractable version: give a criterion for the slope-$0$
layer, that is, for the essential surfaces of Seifert-surface slope,
to consist of a single vertex.
\end{problem}

\begin{problem}[How much peripheral information is really needed?]
\label{prob:minimal-decoration}
The standard peripheral refinement keeps only surface type and boundary
slope.  Determine, for natural knot families, whether one of these two
data can be discarded without introducing nongeometric automorphisms.
The torus-knot and figure-eight calculations show that either datum
alone happens to suffice in the smallest examples; there is no reason
to expect a uniform answer in larger families.
\end{problem}

\begin{problem}[Dimension]
\label{prob:dimension}
Proposition~\ref{prop:dimension} gives $\dim\ES(E)\ge n$ for an
$n$-fold iterated cable, and Lemma~\ref{lem:finite-dim} gives the
Kneser--Haken upper bound $h(E)-1$.  Compute $\dim\ES(E)$ exactly for
iterated torus knots and for connected sums, and determine the growth
of $\dim\ES(E)$ in terms of a complexity of $K$ --- for instance the
number of JSJ pieces, or the genus.
\end{problem}

\subsection{\texorpdfstring{Intermediate ($\bigstar\bigstar$)}{Intermediate (**)}}

\begin{problem}[From W-canonical surfaces to the characteristic frontier]
\label{prob:W-to-frontier}
Proposition~\ref{prop:W-canonical-recognition} recognizes the
W-canonical annulus and torus vertices in $\ES^{\mathrm{per}}(E)$ by
universal adjacency.  Determine whether annulus and torus types, and
hence this subsystem, are intrinsically recognizable in the
bare complex $\ES(E)$.  Starting from the recognized W-system,
determine what additional combinatorial or peripheral data are
necessary to recover the precise characteristic frontier,
characteristic pieces, and attaching data.
\end{problem}

\begin{problem}[JSJ tree and companionship recognition]
\label{prob:jsj-recognition}
Can one recover intrinsically the rooted labelled tree
$\Gamma_{\mathrm{JSJ}}(E)$, including the Seifert/hyperbolic labels
and the peripheral attachment data needed for Budney's companionship
graph \cite{Budney}?  More strongly, determine which part of the
companionship data is forced by $\ES(E)$, $\ES^{\mathrm{per}}(E)$, or
$\CES(E)$.
\end{problem}

\begin{problem}[Remaining kernel for a two-factor connected sum]
\label{prob:connected-sum-kernel}
For $K=K_1\mathbin\#K_2$ as in Theorem~\ref{thm:composite-annulus},
compute the full kernels of the actions on $\ES^{\mathrm{per}}(E)$, $\operatorname{MS}(K)$, and $\operatorname{IS}(K)$.  When both factors
are nonfibered, Proposition~\ref{prop:kakimizu-spinning} shows the
cyclic annular-twist subgroup is detected faithfully on the
minimal-genus Kakimizu layer.  Determine whether the analogous
$\operatorname{IS}$ coordinate is intrinsically preserved; determine
the remaining kernel coming from the factor mapping class groups, from
factor exchange when $K_1\cong K_2$, and from finite invisible
symmetries; determine what happens when one factor is fibered; and
determine whether the winding coordinate can be characterized
intrinsically in $\ES^{\mathrm{per}}(E)$.
\end{problem}

\begin{problem}[Kakimizu and spanning-surface layers]
\label{prob:kakimizu-layers}
Let $\operatorname{MS}(K)\subseteq\operatorname{IS}(K)$ be Kakimizu's
complexes.  Existing results include their definition and connectivity
\cite{Kakimizu,KakimizuClassification}, contractibility and
fixed-point theorems for the mapping class group actions
\cite{PrzytyckiSchultens}, an abelian JSJ-twist action giving coarse
$\mathbb Z^n$ geometry for $\operatorname{MS}(K)$
\cite{JohnsonPelayoWilson}, and connected-sum coordinates for both
layers \cite{BanksConnectedSum}.  Determine and compare the exact
images and kernels of
\[
  \Mod^{\pm}(E(K))\to\Aut(\operatorname{MS}(K)),
  \qquad
  \Mod^{\pm}(E(K))\to\Aut(\operatorname{IS}(K)),
\]
decide which twisting directions are faithfully detected on each
layer, and determine which additional slopes or closed essential
surfaces are needed to reconstruct the characteristic structure.
Remark~\ref{rem:fig8-kakimizu} and
Proposition~\ref{prop:kakimizu-spinning} show that
$\operatorname{MS}(K)$ and $\ES(E)$ can see disjoint parts of the
symmetry, so a combined layer may be the right object.
\end{problem}

\begin{problem}[Boundary-slope shadow]
\label{prob:slope-shadow}
The boundary-slope set records only the image of
$[F]\mapsto\Sl(F)$, forgetting disjointness, surface type,
characteristic support and hierarchy information; equality of
boundary-slope sets is therefore much weaker than an isomorphism of
essential-surface complexes.  Determine how much of $\ES(E)$ is
remembered by its slope shadow.  By Remark~\ref{rem:invariant} the two
coincide for slope-separated knots, so the first question is how far
that class extends.
\end{problem}

\subsection{\texorpdfstring{Research programme ($\bigstar\bigstar\bigstar$)}{Research programme (***)}}

\begin{problem}[Local geometricity]
\label{prob:local-geometricity}
Suppose an automorphism of an economical essential-surface complex
preserves the characteristic frontier.  Under what hypotheses does it
follow that its restriction to each characteristic or complementary
piece is induced by a patterned homeomorphism of that piece?
\end{problem}

\begin{problem}[Globality across the frontier]
\label{prob:globality}
Suppose an automorphism admits a realization datum satisfying
(G1)--(G4) of Definition~\ref{def:pg}.  Find natural conditions on the
complex, or on the characteristic decomposition, ensuring that a glued
realization can be chosen satisfying the globality condition (G5),
that is, inducing the automorphism on the vertices represented by
frontier-crossing surfaces.  Figure~\ref{fig:frontier} is the
obstruction, and Figure~\ref{fig:winding} the smallest example.
\end{problem}

\begin{problem}[Action on the fundamental group]
\label{prob:pi1}
If a mapping class fixes all vertices of $X$, does it induce
the identity in $\Out(\pi_1E)$, possibly modulo a controlled subgroup?
This is the step needed before applying Waldhausen--Johannson
rigidity, and by Remark~\ref{rem:formal} it cannot be assumed.
\end{problem}

\begin{problem}[Reconstruction]
\label{prob:reconstruction}
Determine whether an isomorphism
\[
  \ES^{\mathrm{per}}(E(K))\cong\ES^{\mathrm{per}}(E(K'))
\]
is induced by a homeomorphism of exteriors.  The marked hierarchy atlas
of Appendix~\ref{app:atlas} is a deliberately over-marked sufficient
object; the problem is how far down the three levels of
Figure~\ref{fig:ladder} the conclusion survives.  For slope-separated
knots this specializes to Problem~\ref{prob:slope-shadow}.
\end{problem}

\begin{problem}[Economical three-dimensional Ivanov problem]
\label{prob:3d-ivanov}
For a nontrivial knot exterior, determine the weakest natural
essential-surface object $X$ for which one can compute the image and
kernel, decide whether every automorphism is geometric, and reconstruct
the exterior from the isomorphism type of $X$.  How much of the
peripheral and characteristic information in
$\ES^{\mathrm{per}}(E)$ and $\CES(E)$ is genuinely necessary?
\end{problem}

\section*{Acknowledgements}

\paragraph{Use of generative AI}
The author used ChatGPT (OpenAI) and Claude (Anthropic) as interactive
aids for exposition, alternative formulations, figure preparation,
and preliminary consistency checks.  All mathematical statements,
proofs, computations, and references were independently checked by
the author, who takes full responsibility for the originality and
correctness of the manuscript.

\appendix

\section{Realization data for piecewise geometric automorphisms}
\label{app:realization}

This appendix contains the technical form of the realization
hypotheses used in Section~\ref{sec:reduction}.  The definitions are
deliberately cautious: they do not claim that all automorphisms of
$\CES(E)$ are realized by homeomorphisms, and they do not identify
the kernel of the action.  Figure~\ref{fig:frontier} is the picture to
keep in mind.

Let $E\,|\,\Fr$ denote $E$ cut along $\Fr\Char(E)$, with quotient map
$\pi:E\,|\,\Fr\to E$ and pieces $P_1,\dots,P_n$.  Each frontier
component $Q$ has exactly two preimage copies $Q^{+}\subset\partial
P_i$ and $Q^{-}\subset\partial P_j$ (possibly $i=j$), and we write
\[
  \tau_Q=(\pi|_{Q^{-}})^{-1}\circ(\pi|_{Q^{+}}):Q^{+}\to Q^{-}
\]
for the gluing identification.

\begin{definition}[Piecewise geometric automorphism]
\label{def:pg}
An automorphism $\psi\in\Aut(\CES(E))$ is \emph{piecewise
geometric} if it admits a \emph{realization datum} consisting of the
following.
\begin{enumerate}[label=(G\arabic*),leftmargin=*]
\item $\psi$ preserves the set of vertices represented by
characteristic frontier annuli and tori, and the accompanying data
include a bijection $\sigma$ of
the pieces, a bijection $\rho$ of the frontier components, and a
bijection
\[
  \widehat\rho:\{Q^{+},Q^{-}\}\to\{\rho(Q)^{+},\rho(Q)^{-}\}
\]
for every frontier component $Q$, all compatible with the support
labels.  In the dual graph of the decomposition, $\sigma$ is a vertex
bijection, $\rho$ an edge bijection, and $\widehat\rho$ the
accompanying half-edge bijection.  The last is genuinely additional
data: it is explicitly allowed to exchange the two sides of a frontier
component, as happens when $\psi$ swaps two homeomorphic pieces glued
along $Q$, for instance the two factor pieces of the exterior of
$K_1\mathbin\#K_1$.
\item The triple $(\sigma,\rho,\widehat\rho)$ is an isomorphism of the
dual graph respecting half-edge incidence: if $Q^{\varepsilon}\subset
\partial P_i$ then $\widehat\rho(Q^{\varepsilon})\subset
\partial P_{\sigma(i)}$.
\item For every piece $P_i$ a patterned homeomorphism
$h_i:P_i\to P_{\sigma(i)}$ realizing the correspondence induced by
$\psi$ between surface data supported in $P_i$ and in $P_{\sigma(i)}$,
and satisfying $h_i(Q^{\varepsilon})=\widehat\rho(Q^{\varepsilon})$
for every frontier copy $Q^{\varepsilon}\subset\partial P_i$.
\item For each frontier component $Q$, with $Q^{+}\subset\partial P_i$
and $Q^{-}\subset\partial P_j$, write $R=\rho(Q)$ and let
\[
  \tau_Q^{\widehat\rho}:\widehat\rho(Q^{+})\to\widehat\rho(Q^{-})
\]
be the gluing identification of $R$ read in the direction prescribed
by $\widehat\rho$: explicitly $\tau_Q^{\widehat\rho}=\tau_R$ if
$\widehat\rho$ preserves the two sides, and
$\tau_Q^{\widehat\rho}=\tau_R^{-1}$ if it exchanges them.  The
\emph{frontier discrepancy}
\[
  \delta_Q=\bigl(h_j|_{Q^{-}}\bigr)\circ\tau_Q\circ
           \bigl(h_i|_{Q^{+}}\bigr)^{-1}\circ
           \bigl(\tau_Q^{\widehat\rho}\bigr)^{-1}:
  \widehat\rho(Q^{-})\to\widehat\rho(Q^{-})
\]
is a pattern-preserving self-homeomorphism, whose pattern-isotopy
class $[\delta_Q]$ is the \emph{frontier discrepancy class} of the
datum at $Q$.  The datum is \emph{compatible} if the piece
homeomorphisms can be chosen, among all systems satisfying (G3), so
that every frontier discrepancy class is trivial.
\item \emph{Globality}: for some compatible choice as in (G4), the
glued homeomorphism $h:E\to E$ produced by Lemma~\ref{lem:gluing}
induces $\psi$ on all of $\CES(E)$, that is
$\Phi_{\CES}([h])=\psi$.
\end{enumerate}
Realization data compose and invert, so the piecewise geometric
automorphisms form a subgroup
$\Aut^{\mathrm{pg}}(\CES(E))$.
\end{definition}

\begin{remark}[Why (G5) is not redundant]
\label{rem:G5}
Conditions (G1)--(G4) produce, via Lemma~\ref{lem:gluing}, a
homeomorphism $h$ agreeing with $\psi$ on every vertex whose support
lies in a single piece.  They do \emph{not} force agreement on
vertices represented by surfaces crossing the frontier: composing $h$
with twists supported in collars of frontier annuli and tori changes
its action precisely on such vertices while fixing all single-piece
ones.  See Figure~\ref{fig:frontier} for the picture and
Proposition~\ref{prop:kakimizu-spinning} for a computed instance.
\end{remark}

\begin{remark}
Definition~\ref{def:pg} is not intrinsic in the sense of Ivanov
theory, since it refers to the characteristic pieces of $E$.  This is
intentional: it isolates geometric realization from intrinsic
recognition.  The frontier discrepancy classes are not part of the
combinatorial automorphism; they arise only after choosing the piece
homeomorphisms in (G3).
\end{remark}

\begin{lemma}[Gluing patterned piece maps]
\label{lem:gluing}
Let $E$ be cut along $\Fr\Char(E)$ into pieces $P_1,\dots,P_n$ with
gluing identifications $\tau_Q$.  Suppose given bijections
$\sigma,\rho,\widehat\rho$ as in (G1)--(G2) and patterned
homeomorphisms $h_i:P_i\to P_{\sigma(i)}$ sending each frontier copy
$Q^{\varepsilon}$ to $\widehat\rho(Q^{\varepsilon})$, such that every
frontier discrepancy class $[\delta_Q]$ is trivial.  Then the $h_i$,
after modification by pattern isotopies supported in pairwise disjoint
collars of the frontier copies, glue to a homeomorphism $h:E\to E$,
so that $[h]\in\Mod^{\pm}(E)$.
\end{lemma}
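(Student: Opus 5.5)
The plan is the standard cut-and-paste argument: first make the piece maps agree on the frontier, then glue. Fix, for each frontier component $Q$ with copies $Q^{+}\subset\partial P_i$ and $Q^{-}\subset\partial P_j$, a pair of disjoint closed collars $c(Q^{\pm})\cong Q^{\pm}\times[0,1]$ inside $P_i$ and $P_j$. Choose these with respect to the boundary pattern: they must meet the rest of $\partial P_i$ only in $\partial Q^{\pm}\times[0,1]$, and they must carry the product pattern. Collars for distinct frontier copies are taken pairwise disjoint; this is possible because the frontier components are disjoint properly embedded annuli and tori. If $i=j$, the two copies are still disjoint subsurfaces of $\partial P_i$, so the same choice works. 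The aim is to replace each $h_i$ by a map $h_i'$ such that $h_j'|_{Q^{-}}\circ\tau_Q=\tau_Q^{\widehat\rho}\circ h_i'|_{Q^{+}}$ holds on the nose for every $Q$. Once this holds, the maps $h_i'$ descend through $\pi$ to a well-defined bijection $h:E\to E$. The map $h$ is continuous and has continuous inverse, because it is so on each closed piece and the pieces form a finite closed cover. Hence $[h]\in\Mod^{\pm}(E)$.

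Next, for each $Q$, use triviality of $[\delta_Q]$ to obtain a pattern isotopy $\delta_{Q,t}$ of $\widehat\rho(Q^{-})$ from $\delta_Q$ to the identity. Transport it by the patterned homeomorphism $h_j$ to an isotopy of $Q^{-}$. Realize it by the standard collar trick: on $c(Q^{-})\cong Q^{-}\times[0,1]$ define $(x,t)\mapsto(g_t(x),t)$, where $g_t$ interpolates from $h_j^{-1}\delta_Q^{-1}h_j|_{Q^{-}}$ at $t=0$ to the identity at $t=1$. Extend by the identity off the collar, and let $h_j'$ be $h_j$ composed with the resulting homeomorphism. Since $g_1=\mathrm{id}$, the map is continuous across the inner end of the collar. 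Since each $g_t$ preserves the pattern, $h_j'$ is still patterned, still sends $Q^{\varepsilon}$ to $\widehat\rho(Q^{\varepsilon})$, and is unchanged outside $c(Q^{-})$. A direct substitution into the formula for $\delta_Q$ shows that the new discrepancy is the identity map, not merely isotopic to it. Modifications at distinct frontier copies have disjoint supports, so they commute, and we perform all of them at once. Only the $Q^{-}$ side is modified, so the corrections at different $Q$ cannot interact even when $i=j$.

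The main obstacle is orientation and side bookkeeping when $\widehat\rho$ exchanges $Q^{+}$ and $Q^{-}$, or when $\rho(Q)\ne Q$. In these cases one must check that the correction lands on the copy $\widehat\rho(Q^{-})$ where $\delta_Q$ lives, and that the identity $\tau_Q^{\widehat\rho}=\tau_R^{\pm1}$ from (G4) is exactly what makes the glued map respect $\pi$. I would verify this by writing out the commutative square
\[
  \begin{array}{ccc}
  Q^{+}&\xrightarrow{\ \tau_Q\ }&Q^{-}\\
  \downarrow{\scriptstyle h_i'}&&\downarrow{\scriptstyle h_j'}\\
  \widehat\rho(Q^{+})&\xrightarrow{\ \tau_Q^{\widehat\rho}\ }&\widehat\rho(Q^{-})
  \end{array}
\]
in both cases of (G4). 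Commutativity of this square is the condition $\delta_Q'=\mathrm{id}$, and it is exactly what is needed for points identified by $\pi$ to have identified images. A last small point is the PL category: the collar isotopy must be PL. This is standard, by isotopy extension in a PL collar.
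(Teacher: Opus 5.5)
Your proposal is correct and is essentially the paper's argument. In both, descent along $\pi$ is equivalent to $\delta_Q=\mathrm{id}$, and the trivial class $[\delta_Q]$ is absorbed by a pattern isotopy in a collar of $Q^{-}$, with pairwise disjoint collars so that the corrections do not interfere. Your only real difference is cosmetic: you precompose $h_j$ with the level-preserving collar homeomorphism $(x,t)\mapsto(g_t(x),t)$, so the exact identity $\delta_Q'=\mathrm{id}$ becomes a one-line substitution.
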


\begin{proof}
Fix a frontier component $Q$ with copies $Q^{+}\subset\partial P_i$
and $Q^{-}\subset\partial P_j$.  The maps $h_i,h_j$ descend along
$\pi$ near $Q$ if and only if $h_j\circ\tau_Q=
\tau_Q^{\widehat\rho}\circ h_i$ on $Q^{+}$, which is precisely
$\delta_Q=\mathrm{id}$, and this makes sense whether or not
$\widehat\rho$ exchanges the two sides.  By hypothesis $\delta_Q$ is
pattern-isotopic to the identity, so the two boundary restrictions
differ by a pattern-preserving homeomorphism pattern-isotopic to the
identity.  Such a difference is absorbed in a collar: choose a collar
$Q^{-}\times[0,1]\subset P_j$ and replace $h_j$ there by the map which
at level $t$ interpolates along a chosen pattern isotopy between the
corrected boundary map at $t=0$ and the original $h_j$ at $t=1$.
Collars of distinct frontier copies may be chosen pairwise disjoint,
so corrections at different components do not interfere.  After these
modifications the disjoint union of the piece maps commutes with all
gluing identifications, hence descends to a homeomorphism of
$E=(E\,|\,\Fr)/\{\tau_Q\}$.
\end{proof}

\begin{proposition}[Realization of piecewise geometric automorphisms]
\label{prop:realization}
Every piecewise geometric automorphism
$\psi\in\Aut^{\mathrm{pg}}(\CES(E))$ is realized by a mapping
class in $\Mod^{\pm}(E)$; equivalently
$\Aut^{\mathrm{pg}}(\CES(E))\subseteq\Image\Phi_{\CES}$.
\end{proposition}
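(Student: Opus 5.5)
The proof is essentially an unpacking of Definition~\ref{def:pg}, with Lemma~\ref{lem:gluing} doing the only real work. Let $\psi\in\Aut^{\mathrm{pg}}(\CES(E))$ and fix a realization datum for it.

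The plan has three steps.

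\textbf{Step 1: fix the gluable choice.} Take the combinatorial data $(\sigma,\rho,\widehat\rho)$ from (G1)--(G2). By (G4), the datum is compatible, so among the piece systems satisfying (G3) there is one with every frontier discrepancy class $[\delta_Q]$ trivial. By (G5), we may take this choice to be the one whose glued map induces $\psi$.

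\textbf{Step 2: glue.} Apply Lemma~\ref{lem:gluing} to these $h_i$. After pattern isotopies in pairwise disjoint collars of the frontier copies, they glue to a homeomorphism $h:E\to E$ with $[h]\in\Mod^{\pm}(E)$. The side-exchange permitted by $\widehat\rho$ is already handled inside Lemma~\ref{lem:gluing} through $\tau_Q^{\widehat\rho}$, so no separate case is needed.

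\textbf{Step 3: conclude.} Condition (G5) states that $\Phi_{\CES}([h])=\psi$, which is the realization claim. The inclusion $\Aut^{\mathrm{pg}}(\CES(E))\subseteq\Image\Phi_{\CES}$ is then simply a restatement.

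The one point that needs care is a possible mismatch between two maps. (G5) asserts the conclusion for the glued map ``produced by Lemma~\ref{lem:gluing}''. However, the collar modifications in that lemma involve choices of pattern isotopy, so the construction does not single out one map. I would argue that any two outputs differ by a homeomorphism supported in the frontier collars that is isotopic to the identity there. Concretely, they differ by a loop of pattern isotopies, which need not be trivial; this is exactly where frontier Dehn twists could enter. This is why I read (G5) as quantifying over \emph{some} compatible choice and \emph{some} gluing. With that reading, the $h$ that witnesses (G5) is what we take in Step 2, and the dependence on choices never has to be resolved.

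So the main obstacle is interpretive rather than technical: the statement must be read so that (G5) supplies the specific glued homeomorphism. Under that reading the proof is formal. It is worth noting after the proof that all the real content, in particular the frontier-crossing compatibility discussed in Remark~\ref{rem:G5}, has been packed into the hypothesis (G5).
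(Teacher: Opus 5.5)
Your proof is correct and is essentially the paper's argument: take the compatible system from (G4) that witnesses (G5), glue it via Lemma~\ref{lem:gluing}, and read off $\Phi_{\CES}([h])=\psi$ from (G5). You also note that the collar interpolation in Lemma~\ref{lem:gluing} determines the glued map only up to frontier twists, so (G5) must be read as supplying the particular glued map. That observation is accurate and spells out a point the paper's two-line proof leaves implicit.
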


\begin{proof}
Choose a realization datum with a compatible system of piece
homeomorphisms as in (G4).  By Lemma~\ref{lem:gluing} these glue to a
homeomorphism $h$ of $E$, so $[h]\in\Mod^{\pm}(E)$; condition (G5)
states precisely that $\Phi_{\CES}([h])=\psi$.
\end{proof}

\begin{remark}
With (G5) included in the definition, Proposition~\ref{prop:realization}
is close to a tautology.  Its role is to fix precisely what
``geometric realization'' must mean in
the roadmap of Section~\ref{sec:reduction}, and to locate the honest content in
(G1)--(G5) rather than in the gluing step.  Without (G5) the argument
yields only a mapping class agreeing with $\psi$ on single-piece
vertices, by Remark~\ref{rem:G5}.
\end{remark}

\begin{definition}[Piecewise geometric automorphisms of economical objects]
Let $X$ be an economical essential-surface object equipped with a
forgetful map $\CES(E)\to X$.  An automorphism
$\bar\psi\in\Aut(X)$ is \emph{piecewise geometric relative
to a reconstructed characteristic structure} if the characteristic
frontier and pieces are intrinsically reconstructed from $X$ and
$\bar\psi$ admits a lift $\widetilde\psi\in\Aut(\CES(E))$
along the forgetful map which is piecewise geometric in the sense of
Definition~\ref{def:pg}.  If the lift is not unique, the phrase means
that at least one such lift, together with a realization datum
satisfying (G1)--(G5), has been chosen.  Asserting that every
automorphism of an economical object is piecewise geometric is
therefore the conjunction of four statements: existence of a lift;
realization on each piece; compatibility of frontier discrepancy
classes; and globality.  The second is the three-dimensional analogue
of the hard part of the Ivanov theorem.  All four are hypotheses in
this paper, not conclusions.
\end{definition}

\section{The marked hierarchy atlas}
\label{app:atlas}

This appendix records the over-marked reconstruction object of
Section~\ref{sec:reduction}.  It is a benchmark, not a candidate for
the final Ivanov complex.

\begin{definition}[Marked hierarchy atlas]
\label{def:atlas}
The \emph{marked hierarchy atlas} $\HA(E,\mu)$ consists of:
\begin{enumerate}[label=(M\arabic*),leftmargin=*]
\item the set of all complete Haken hierarchy records of $E$, up to
isotopy;
\item for each record, the patterned $3$-balls obtained at the end of
that hierarchy;
\item for each record, the incidence relation among the patterned
faces;
\item for each record, chosen PL gluing maps along all faces together
with their pattern-isotopy classes relative to a fixed reference
system: for \emph{every} face the atlas records both an actual gluing
homeomorphism, as a map and not merely as an isotopy class, and the
class, in the pattern-preserving mapping class group of that face, of
its difference from a fixed reference gluing (for annular and toral
faces these classes are the familiar twist coordinates);
\item the peripheral meridian label $\mu$.
\end{enumerate}
An \emph{isomorphism of marked hierarchy atlases} is a bijection
between the sets of hierarchy records together with, for each matched
pair, a system of PL homeomorphisms of the associated terminal
patterned balls which matches the boundary patterns and incidence
data, \emph{commutes strictly with the gluing maps of matched faces},
and carries the reference systems, the gluing classes of (M4), and the
meridian label of one record to those of the other.

Two deliberate over-markings are built in.  First, (M4) records a
gluing class on \emph{every} face, not only on annular and toral ones;
for faces with trivial pattern-preserving mapping class group the
class carries no information, but no triviality assumption is made.
Second, strict commutation is required rather than commutation up to
pattern isotopy.  This is not an innocent convenience: distinct faces
of a terminal ball meet along edges and vertices, so they do not admit
pairwise disjoint collars, and improving a system of ball
homeomorphisms which intertwines the gluing maps only up to pattern
isotopy would require an induction over the strata of the face
structure --- matching first on vertices, then on edges, then on
faces, with isotopy extension at each step.  The atlas records the
strictly compatible system instead of deriving it.  We make no attempt
to render the reference data canonical; that dependence is part of the
marking.
\end{definition}

\begin{lemma}[Patterned ball realization]
\label{lem:ball}
Let $B,B'$ be PL $3$-balls with finite boundary patterns.  A
combinatorial isomorphism of the boundary cell structures is realized
by a PL homeomorphism $B\to B'$.
\end{lemma}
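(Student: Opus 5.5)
We build the homeomorphism skeleton by skeleton, following the cell structure of the boundary pattern, and then cone to the interior by Alexander's trick.

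Fix a combinatorial isomorphism $\phi$ from the stratification of $\partial B$ to that of $\partial B'$. By the definition of a boundary pattern, this stratification is a finite cell structure whose $0$-cells are the triple points and the endpoints of double arcs, whose $1$-cells are the arcs of face intersections (and subdivisions of circle components, if needed), and whose $2$-cells are the components of the faces minus the $1$-skeleton.

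\textbf{Steps $0$ and $1$.} First we send each $0$-cell to its image under $\phi$. Each $1$-cell is a PL arc with endpoints among the $0$-cells, and $\phi$ specifies its target arc together with the endpoint correspondence. So we choose PL homeomorphisms of the arcs extending the vertex map. A circle component of a face intersection carries no vertex, and we handle it by mapping it to its image circle by any PL homeomorphism, preserving the orientation data that $\phi$ prescribes.

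\textbf{Step $2$.} Each $2$-cell $D$ is a PL disk, or more generally a compact planar surface if the pattern has circle components; in that case we first subdivide so that all $2$-cells are disks. Its boundary is a union of $1$-cells already mapped homeomorphically onto $\partial D'$. A PL homeomorphism $\partial D\to\partial D'$ extends to a PL homeomorphism $D\to D'$ by coning from interior points. We assemble these extensions to get a PL homeomorphism $\partial B\to\partial B'$ carrying each face to the corresponding face.

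\textbf{Step $3$.} Since $B$ and $B'$ are PL $3$-balls, the PL Alexander trick, coning from an interior point after identifying each ball with a cone on its boundary, extends the boundary homeomorphism to a PL homeomorphism $B\to B'$. This homeomorphism preserves the patterns because it does so on the boundary.

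\textbf{Main obstacle.} The delicate point is in step $2$: we must make sure that $\phi$ really is a cellular isomorphism in which each $2$-cell is a disk and the boundary cycles correspond, including orientation compatibility. If some $2$-cell is not a disk, for example an annular face bounded by two circles, then extending over it requires the boundary maps on the two circles to be compatible with a homeomorphism of the annulus. I would handle this by refining both cell structures compatibly: add a $\phi$-matched arc joining the two boundary components, so that every $2$-cell becomes a disk. Since $\phi$ is only combinatorial, these added arcs must be chosen symmetrically in $B$ and $B'$, which is possible because $\phi$ identifies the face adjacencies.

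Orientation is the other concern. On a sphere, a cellular isomorphism determined by the face incidences respects or reverses orientation globally and consistently. Either way it is realized, since reflections are allowed.
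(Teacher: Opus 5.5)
Your argument is the paper's proof: build a PL homeomorphism of $\partial B\cong S^2$ over the $0$-, $1$- and $2$-skeleta of the pattern's cell structure and cone it over the ball by the PL Alexander trick, with the paper simply citing Rourke--Sanderson for the details you spell out. Drop the hedge about non-disk $2$-cells. It is unnecessary, because the definition requires the stratification to be a finite cell structure, so every $2$-cell is a disk; that is exactly why orientation behaviour propagates consistently across shared edges. It is also false as stated: take two caps, each cut into three sectors, separated by an annular face. Fixing the northern sectors while swapping two southern ones respects every incidence, yet it is induced by no homeomorphism of $S^2$, so the compatible $\phi$-matched refinement you propose cannot exist.
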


\begin{proof}
A finite boundary pattern gives a finite cell structure on
$S^2=\partial B$.  A combinatorial isomorphism of cell structures is
realized by a PL homeomorphism of $S^2$, built cell by cell over the
skeleta, and the PL Alexander trick extends it over the ball; see
Rourke--Sanderson \cite{RourkeSanderson}.
\end{proof}

\begin{remark}[Why the atlas reconstructs]
Choose a hierarchy record $\mathfrak h$ in $\HA(E,\mu)$.  By
definition of atlas isomorphism, $\mathfrak h$ is sent to a specified
record $\mathfrak h'$ in $\HA(E',\mu')$, together with a system of PL
homeomorphisms of the associated terminal patterned balls commuting
strictly with the gluing maps of matched faces.  The disjoint union of
these ball homeomorphisms therefore intertwines every gluing
identification on the nose.  Reassembling the hierarchy in reverse
order consequently yields a homeomorphism $E\to E'$, carrying the
meridian label to the meridian label by (M5).

Here the notation $\HA(E,\mu)$ intentionally retains $\mu$: unlike the
three main objects of the paper, the over-marked atlas explicitly
contains the meridian label as part of its reconstruction data.  No
independence of the initial hierarchy choice is asserted or needed;
the atlas isomorphism records the image of every record, and one
compatible pair suffices.

The atlas is intentionally over-marked and reconstruction is
definitional by design.  Lemma~\ref{lem:ball} shows only that an
individual combinatorial boundary-pattern isomorphism is realizable.
What strict compatibility adds is agreement of the ball-by-ball
realizations across shared faces, edges, and vertices; extracting such
agreement from weaker isotopy-level data would require the skeletal
induction indicated above, which we avoid by recording the compatible
system itself.  This is a calibration result, not a substitute for an
Ivanov theorem: the problem is to recover this information from a
natural economical complex, not to assume it as part of the object.
\end{remark}

\section{\texorpdfstring{A recipe: computing $\ES(E)$ for a two-bridge knot}{A recipe: computing ES(E) for a two-bridge knot}}
\label{app:recipe}

This appendix is a worked procedure.  It uses only
Hatcher--Thurston's classification \cite{HatcherThurston} together
with the slope-layer lemma (Lemma~\ref{lem:slope-layer}), and it is
the concrete tool behind the entry-level problems of
Section~\ref{sec:problems}.  We carry it out completely for the
figure-eight knot; the same steps apply verbatim to any two-bridge
knot, with Step~6 the only genuinely difficult one.

Let $K=K_{p/q}$ be a two-bridge knot in Schubert normal form, with
$q$ odd and $0<p<q$.

\begin{recipe}[Computing $\ES(E)$]
\label{rec:main}
\
\begin{enumerate}[label=\textbf{Step \arabic*.},leftmargin=*,itemsep=4pt]
\item \emph{List the minimal continued-fraction expansions.}  Write
all continued-fraction expansions
\[
  p/q=r+\cfrac{1}{b_1+\cfrac{1}{b_2+\cdots+\cfrac{1}{b_k}}}
\]
with $|b_i|\ge2$ for all $i$.  Hatcher--Thurston show that the
essential surfaces in $E(K)$ are exactly those carried by the branched
surfaces $\Sigma(b_1,\dots,b_k)$ associated with these expansions
\cite[Theorem~1]{HatcherThurston}.
\item \emph{Count the carried surfaces.}  For each expansion,
Hatcher--Thurston's Propositions~1--2 count the connected
incompressible, boundary-incompressible surfaces with $n$ sheets.  If
at least two of the $b_i$ are odd, connected $n$-sheeted surfaces
exist for every $n$, and the vertex set of $\ES(E)$ is infinite; if at
most one $b_i$ is odd, only finitely many $n$ occur.
\item \emph{Discard non-vertices.}  Keep only \emph{orientable}
surfaces: one-sided surfaces are not vertices by
Definition~\ref{def:essential}.  Discard also boundary-parallel
annuli and boundary-parallel tori.  A one-sided surface $N$ still
contributes indirectly: the frontier of a regular neighbourhood
$\partial N(N)$ is an orientable two-sheeted carried surface, and is
often a vertex.
\item \emph{Read off the boundary slopes.}  Set
$b_{\pm}=\#\{i:\pm b_i>0\}$ and $\sigma[b_1,\dots,b_k]=b_{+}-b_{-}$.
With Hatcher--Thurston's conventions all boundary slopes of a
two-bridge knot are even integers, and differences of boundary slopes
are given by
\[
  \Sl[b_1,\dots,b_k]-\Sl[b'_1,\dots,b'_{k'}]
  =2\bigl(\sigma[b_1,\dots,b_k]-\sigma[b'_1,\dots,b'_{k'}]\bigr).
\]
Normalize by declaring the all-even expansion of Step~1, which carries
the orientable Seifert surface, to have slope $0$; every other slope
is then determined.  The absolute formula $\Sl=2\sigma$ is valid only
when the all-even expansion itself has $\sigma=0$, as happens for
$4_1$; it already fails for the trefoil, whose all-even expansion
$[2,2]$ has $\sigma=2$ while its Seifert surface has slope $0$.  Check
this convention, and the mirror-image convention, against the worked
table below before using either version.
\item \emph{Form the slope layers.}  Group the vertices by boundary
slope.  By Lemma~\ref{lem:slope-layer} there are no edges between
different layers, so $\ES(E)$ is already determined up to the internal
structure of each layer.
\item \emph{Decide disjointness inside each layer.}  This is the only
hard step, and it is item (B2) of
Problem~\ref{prob:two-bridge-rigidity}.  If every layer is a
singleton, the knot is slope-separated and Step~7 finishes the
computation.
\item \emph{Apply the visibility theorem.}  If $K$ is
slope-separated, Theorem~\ref{thm:visibility} gives the image and the
kernel of the natural action on all bare and peripheral objects at once:
the image is $\mathbb Z/2$ if $K$ is amphichiral and some slope is
nonzero, and trivial otherwise; the kernel is $\Mod^{+}(E)$ or
$\Mod^{\pm}(E)$ correspondingly.  No mapping class group
computation is required.  Proposition~\ref{prop:aut-slope-separated}
then gives all the combinatorial automorphism groups from the
peripheral slope--type list alone.
\end{enumerate}
\end{recipe}

\begin{figure}[htb]
\centering
\begin{tikzpicture}[
  every node/.style={font=\small},
  box/.style={draw=greyish,rounded corners=2pt,inner sep=4pt,align=center,
              text width=32mm,minimum height=8mm},
  hi/.style={box,draw=surfA!70!black,fill=lightfill},
  hard/.style={box,draw=surfB!80!black,fill=surfB!8},
  ar/.style={-{Stealth[length=4pt]},thick,greyish}]
\node[box] (s1) at (0,0)    {\textbf{1--2.} expansions and\\ carried surfaces};
\node[box] (s3) at (0,-1.6) {\textbf{3.} keep orientable\\ and essential};
\node[box] (s4) at (0,-3.2) {\textbf{4.} slopes\\ $2(b_+-b_-)$};
\node[hi]  (s5) at (5.0,-3.2) {\textbf{5.} slope layers\\ (no cross edges)};
\node[hard](s6) at (5.0,-1.6) {\textbf{6.} disjointness\\ inside a layer\\ \emph{(the hard step)}};
\node[hi]  (s7) at (5.0,0)    {\textbf{7.} image and kernel\\ by Thm~\ref{thm:visibility}};
\draw[ar] (s1) -- (s3);
\draw[ar] (s3) -- (s4);
\draw[ar] (s4) -- (s5);
\draw[ar] (s5) -- (s6);
\draw[ar] (s6) -- (s7);
\node[greyish,align=left,text width=34mm] at (9.4,-1.6)
  {If every layer is a singleton, Step 6 is vacuous and the whole
   computation is mechanical.};
\end{tikzpicture}
\caption{Recipe~\ref{rec:main}.  Steps 1--5 are algorithmic; Step 6 is
the research problem; Step 7 is automatic for slope-separated knots.}
\label{fig:recipe}
\end{figure}

\medskip
\noindent\textbf{The figure-eight knot, done in full.}

Here $4_1=K_{2/5}$, equivalently $K_{3/5}$ since
$3\equiv2^{-1}\pmod5$.  Hatcher--Thurston's example table gives three
minimal expansions --- in the convention of Step~1 all three evaluate
to $2/5$ --- and the recipe produces the following.  The all-even
expansion is $[-2,2]$, with $\sigma=0$, so here the absolute formula
$\Sl=2\sigma$ is valid.

\begin{center}
\small
\renewcommand{\arraystretch}{1.25}
\begin{tabular}{@{}lcccccl@{}}
\toprule
expansion & $b_+$ & $b_-$ & $\Sl$ (here $=2\sigma$) & sheets & orientable?
 & vertex \\
\midrule
$[-2,2]$  & $1$ & $1$ & $0$  & $1$ & yes & $F$: once-punctured torus\\
$[2,3]$   & $2$ & $0$ & $+4$ & $1$ & no  & (one-sided $N_+$)\\
$[2,3]$   & $2$ & $0$ & $+4$ & $2$ & yes & $S_+=\partial N(N_+)$\\
$[-3,-2]$ & $0$ & $2$ & $-4$ & $1$ & no  & (one-sided $N_-$)\\
$[-3,-2]$ & $0$ & $2$ & $-4$ & $2$ & yes & $S_-=\partial N(N_-)$\\
\bottomrule
\end{tabular}
\end{center}

\noindent
Step~3 removes the two one-sided once-punctured Klein bottles $N_\pm$
and keeps their doubles.  Step~5 produces three singleton layers, at
slopes $0,+4,-4$; none of these is the meridian, and no closed
essential surface occurs, so Step~6 is vacuous and $4_1$ is
slope-separated.
Step~7 gives, without computing $\Mod^{\pm}(E_8)$:
\[
  \Image\Phi_X\cong\mathbb Z/2,
  \qquad
  \mathcal K_X(E_8)=\Mod^{+}(E_8),
\]
because $4_1$ is amphichiral and $S=\{0,\pm4\}\ne\{0\}$.  Feeding in
the classical fact that $\Mod^{\pm}(E_8)$ is dihedral of order
eight (Lemma~\ref{lem:figure-eight-action}) identifies the kernel as
$(\mathbb Z/2)^2$, which is Corollary~\ref{cor:fig8-final}.
Proposition~\ref{prop:aut-slope-separated} now reads directly from the peripheral slope--type list
\[
  \bigl\{(0,(1,1)),\ (+4,(1,2)),\ (-4,(1,2))\bigr\}.
\]
It gives $S_3$ for the bare complex and $\mathbb Z/2$ for the peripheral refinement; the type-only and slope-only diagnostic views also have automorphism group $\mathbb Z/2$.

\medskip
\noindent\textbf{What to try next.}

The natural next targets are the two-bridge knots whose expansions
have at most one odd coefficient, since by Step~2 these have finite
vertex sets.  For each, Steps~1--5 are mechanical; if the layers turn
out to be singletons the computation is complete, and if not,
Step~6 gives a concrete disjointness question inside a single slope
layer, which is the smallest genuinely new problem in this circle of
ideas.

\end{document}